\documentclass[11pt]{amsart}
\usepackage{amssymb, amscd, amsmath, amsthm, epsf,  latexsym,color} 
\usepackage{hyperref}
\usepackage{pinlabel}
\usepackage{pb-diagram}
\usepackage{soul}
\usepackage{tikz-cd}
\synctex=1
\usepackage{float}
\usepackage{graphicx}
\usepackage{comment}


 

\usepackage{hyperref}


 \usepackage{color}

\numberwithin{equation}{section}

\newtheorem{theorem}{Theorem}[section]
\newtheorem{corollary}[theorem]{Corollary}
\newtheorem{lemma}[theorem]{Lemma}
\newtheorem{proposition}[theorem]{Proposition}

\newtheorem{question}[theorem]{Question}


\theoremstyle{definition}
\newtheorem{definition}[theorem]{Definition}
\newtheorem{example}[theorem]{Example}





\newcommand{\ep}{\epsilon}

\newcommand{\bbi}{{{\bf i}}}

\newcommand{\bbk}{{{\bf k}}}

\newcommand{\FF}{{\mathbb F}}
\newcommand{\ZZ}{{\mathbb Z}}
\newcommand{\RR}{{\mathbb R}}
\newcommand{\CC}{{\mathbb C}}
\newcommand{\TT}{{\mathbb T}}
\newcommand{\PP}{{\mathbb P}}
\newcommand{\cA}{{\mathcal A}}
\newcommand{\cD}{{\mathcal D}}
\newcommand{\cF}{{\mathcal F}}
\newcommand{\cG}{{\mathcal G}}

\newcommand{\cB}{{\mathcal B}}
\newcommand{\cL}{{\mathcal L}}
\newcommand{\cM}{{\mathcal M}}
\newcommand{\Obj}{\operatorname{Obj}}
\newcommand{\Tw}{\operatorname{Tw}}

\newcommand{\Kh}{ \operatorname{Kh}  } 

\newcommand{\RN}[1]{%
  \textup{\uppercase\expandafter{\romannumeral#1}}%
}

\newcommand{\nat}{\natural}

\DeclareMathOperator{\Hom}{hom}

\graphicspath{ {figures/} }

 \renewcommand{\qed}{\hfill$\square$}


%
\author[M.~Hedden]{Matthew Hedden}
\address{Department of Mathematics, Michigan State University, East Lansing, MI 48824} 
\email{mhedden@math.msu.edu}

%
\author[C.~Herald]{Christopher M. Herald}
\address{Department of Mathematics and Statistics,   University of Nevada, Reno, Reno, NV 89557} 
\email{herald@unr.edu}

%

\author[M.~Hogancamp]{Matthew Hogancamp}
\address{Department of Mathematics, Northeastern University, Boston, Massachusetts 02115 } 
\email{m.hogancamp@northeastern.edu}

 \author[P.~Kirk]{Paul Kirk}
\address{Department of Mathematics, Indiana University, Bloomington, IN 47405} 
\email{pkirk@indiana.edu}


%
%
%
%


\title[Pillowcase  and Khovanov cohomology]{The Fukaya category of the pillowcase, traceless character varieties,  and Khovanov cohomology }

\begin{document}

\date{Sept. 18, 2018}

\begin{abstract} For a diagram of a $2$-stranded tangle in the $3$-ball we define a twisted complex of compact Lagrangians in the triangulated envelope of the Fukaya category of the smooth locus of the pillowcase.  We show that this twisted complex is a functorial invariant of the isotopy class of the tangle, and that it provides a factorization of Bar-Natan's functor from the tangle cobordism category to chain complexes.  In particular, the hom set of our invariant with a particular non-compact Lagrangian associated to the trivial tangle is naturally isomorphic to the reduced Khovanov chain complex of the closure of the tangle.   Our construction comes from the geometry of traceless $SU(2)$ character varieties associated to resolutions of the tangle diagram, and was inspired by  Kronheimer and Mrowka's singular instanton link homology.   \end{abstract}

 \maketitle

\section{Introduction}
This article continues the investigation, set in motion by our earlier articles \cite{HHK1, HHK2, HK}, of the information contained in the traceless $SU(2)$ character varieties associated to a 2-stranded tangle decomposition of the link.  In \cite{HHK2} we considered a tangle decomposition \[(S^3,L)=(D^3, T_0)\cup_{(S^2,4)}(D^3,T_1),\] where  $T_0$ is a trivial 2-tangle in a 3-ball and $T_1$ its complement, and defined  a Lagrangian Floer complex for $L$ from two variants of the immersed traceless character varieties of the complements of the tangles in the decomposition, $R^\nat_\pi(D^3,T_0)$,  $R _\pi(D^3,T_1)$.  This theory, which we called {\em pillowcase homology}, takes place in the smooth part $P^*$  of the traceless character variety of the 4-punctured Conway sphere, $P=R(S^2,4)$, a  variety easily identified with the the quotient of the torus by the elliptic involution.   Our construction is an attempt to provide a symplectic counterpart to Kronheimer and Mrowka's {\em singular instanton link homology} \cite{KM1}.

The present article adopts a  more subtle approach, replacing one of the two Lagrangians, $R _\pi(D^3,T_1)$, by a twisted complex over the Fukaya category of the punctured pillowcase obtained from the cube of resolutions associated to a projection of $T_1$.   Our main result, proved in Sections \ref{functors}, \ref{TwistedSister}, and \ref{mainresults}, 
 states that Bar-Natan's functor from the category of 2-tangles to the category of  chain complexes, defined by sending a 2-tangle to the Khovanov cube associated to the closure of the tangle, factors as a composition of two $A_\infty$-functors through the $A_\infty$-category of twisted complexes over the Fukaya category of the pillowcase.  An immediate consequence of this factorization is the following theorem:

\begin{theorem}\label{thm:main} To a diagram of a $2$-stranded tangle in the $3$-ball, $T\subset D^3$ there is a twisted complex of graded Lagrangians $(L_T,\delta_T)$ lying in the triangulated envelope of the Fukaya category of the punctured pillowcase $\Tw {\rm Fuk}(P^*)$.   The homotopy type of $(L_T,\delta_T)$ is an invariant of the isotopy type of $T$ rel boundary.   
\end{theorem}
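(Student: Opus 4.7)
The plan is to construct $(L_T,\delta_T)$ as the convolution of a cube of resolutions associated to the diagram $T$.  To each of the $2^n$ vertices $v$ of the cube---corresponding to a crossingless resolution $T_v$ of a diagram with $n$ crossings---I would assign a compact graded Lagrangian $L_v\subset P^*$ obtained from the immersed traceless $SU(2)$ character variety of the tangle complement $D^3\setminus T_v$, in the spirit of \cite{HHK2,HK}; after a small equivariant perturbation one may assume $L_v$ lies in the smooth locus and is in general position.  To each edge $v\to w$ of the cube, which records a saddle move $T_v\leadsto T_w$, I would assign a degree one Floer morphism $\delta_{vw}\in \Hom_{\mathrm{Fuk}(P^*)}(L_v,L_w)$ coming either from a Lagrangian cobordism in $[0,1]\times P^*$ or directly from counts of perturbed holomorphic strips.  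Higher faces of the cube contribute higher $A_\infty$ components, yielding a candidate total differential $\delta_T=\sum \delta_{v_0\cdots v_k}$.

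The first nontrivial step is showing that $(L_T,\delta_T)$ actually is a twisted complex, i.e.\ that $\delta_T$ satisfies the Maurer--Cartan equation for the $A_\infty$ structure on $\mathrm{Fuk}(P^*)$.  Rather than check this face by face, the route suggested by the abstract is cleaner: construct an $A_\infty$-functor $\Phi$ from a suitable $A_\infty$-enhancement of Bar-Natan's tangle cobordism category to $\Tw\,\mathrm{Fuk}(P^*)$ whose action on elementary saddles reproduces the $\delta_{vw}$.  Applying $\Phi$ to the standard cube in the Bar-Natan category then produces $(L_T,\delta_T)$ automatically and furnishes the Maurer--Cartan equation from the analogous identity in the source.

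With existence in place, the isotopy invariance of the homotopy type reduces to invariance under the Reidemeister moves for tangle diagrams.  For each move I would exhibit an explicit homotopy equivalence of twisted complexes: Reidemeister I by canceling an acyclic mapping cone corresponding to a circle resolution, Reidemeister II by collapsing a contractible $2\times 2$ subcube, and Reidemeister III by a zig-zag identification of two cubes sharing a common subquotient.  These are precisely Bar-Natan's local homotopies on the combinatorial side, and they transport to $\Tw\,\mathrm{Fuk}(P^*)$ via $\Phi$; the passage from a chain homotopy in the source to a homotopy equivalence of mapping cones in the target is formal once $\Phi$ is available.

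The main obstacle, I expect, is the construction of $\Phi$ itself, particularly its higher $A_\infty$ components.  This hinges on concrete Floer-theoretic computations inside the punctured pillowcase for a handful of local models: the two resolutions of a single crossing together with the saddle between them, and the diagrams producing the Reidemeister II and III configurations.  One must identify the relevant Lagrangians, compute their Floer cohomologies, and match the Floer products with Bar-Natan's local relations (sphere, torus, and neck-cutting).  Once these local calculations are in hand, the global assembly of $(L_T,\delta_T)$ and its invariance follow from the $A_\infty$-functoriality of $\Phi$.
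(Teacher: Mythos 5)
Your overall architecture matches the paper's: factor through Bar-Natan's cobordism category via an $A_\infty$-functor into $\Tw\,\mathrm{Fuk}(P^*)$, and then inherit isotopy invariance formally from Bar-Natan's invariance theorem together with the fact that $A_\infty$-functors preserve homotopy equivalences of twisted complexes (this is exactly Lemma \ref{check} and Corollary \ref{invariance} in the paper). You also correctly identify the vertex Lagrangians as traceless character varieties of complete resolutions.

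However, you explicitly flag the construction of $\Phi$ (and especially its higher $A_\infty$-components) as the main unresolved obstacle, and that is precisely where the paper's content lies. Two observations that you do not anticipate make this tractable. First, the vertex Lagrangians are not an unbounded zoo: Theorem \ref{fukayadudes2} shows that $R^\nat_\pi(D^2\times I,T_\ell(m))$ is always $\{\pm 1\}^m\times L_\ell$, so the target can be taken to be $\Sigma\cL$ for the tiny full subcategory $\cL$ with \emph{only two} objects $L_0,L_1$, with circle components recorded as $A^{\otimes m}$ in the additive enlargement. This puts the whole Fukaya-theoretic input within reach of a finite hand computation (Theorems \ref{maincalc} and \ref{testthm}). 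Second---and this is the step your proposal underestimates---the functor $\cF$ can be taken to have $\cF^n=0$ for all $n>1$, i.e.\ no higher components at all. This works only because of the specific computation that $\mu^3_\cL(x,y,z)=0$ whenever $x,y,z$ all lie in $\mathrm{Span}\{a_0,a_1,c_0,c_1,q_{10},p_{01}\}=\mathrm{Image}(\cF^1)$ (Equation (\ref{eq7.1.2})); without this vanishing, one would indeed be forced into the harder construction of nontrivial $\cF^n$ that you describe. There is also a smaller mismatch: the source is not Bar-Natan's category on the nose but a quotient $\cD$ that additionally kills dotted cobordisms carrying a dot on the earringed component (relation (\ref{kill})); this is needed so that $\Hom_\cD$ injects into $\Hom_\cL$ via $\cF^1$ and so that the resulting theory computes \emph{reduced} Khovanov homology. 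So the strategy is right, but the proof turns on a sharp combinatorial computation of $\cL$ and a fortunate vanishing of $\mu^3$ on the relevant subalgebra, neither of which your sketch supplies.
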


\noindent See Sections \ref{TwistedSister} and \ref{mainresults} for more details, particularly regarding the (bi)grading of $(L_T,\delta_T)$. We should also mention that, since we have factored Bar-Natan's functor through   $\Tw {\rm Fuk}(P^*)$, our invariant automatically inherits  functoriality with respect to tangle cobordisms in $D^3\times [0,1]$. 

As a corollary of Theorem \ref{thm:main},  and the fact that Bar-Natan's functor factors through $\Tw {\rm Fuk}(P^*)$,  we realize the (reduced) Khovanov cohomology of a link as the  Lagrangian Floer cohomology of  a pair of   twisted complexes of Lagrangians associated with traceless character varieties.   To state this result precisely,  recall that morphism spaces in $A_\infty$-categories are cochain complexes, and hence the hom space from any fixed Lagrangian to  $(L_T,\delta_T)$   is a cochain complex.  To the trivial tangle $T_0$,  there is an easily identified Lagrangian arc $W_0$  in the pillowcase, arising as the restriction of the traceless character variety of the complement of $T_0$ to the corresponding character variety of the Conway sphere.  We recover reduced Khovanov homology (\cite{Kh, KhTangle}) of the closure of $T$ (using $T_0$ as the complementary tangle) as the cohomology of the space of morphisms between $W_0$ and $(L_T,\delta_T)$:

\begin{corollary} Let $T\subset D^3$ be a 2-stranded tangle, and let $\widehat{T}$ be the link obtained as its $0$-closure by the trivial tangle.  Then we have an isomorphism of   $\ZZ\oplus\ZZ$-bigraded $\FF_2$ vector spaces
\[ \Kh^{\rm red}( \widehat{T}) \cong H^*(\Hom_{}(W_0,(L_T,\delta_T))),\]
where the $\Hom$ space is taken within $\Tw {\rm Fuk}(P^*)$.
\end{corollary}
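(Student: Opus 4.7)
The strategy is to extract the corollary directly from Theorem~\ref{thm:main} and the factorization of Bar-Natan's functor stated above. Writing $\mathrm{BN} = G \circ F$, where $F$ is the $A_\infty$-functor sending a $2$-tangle $T$ to its twisted complex $(L_T,\delta_T) \in \Tw {\rm Fuk}(P^*)$ and $G \colon \Tw {\rm Fuk}(P^*) \to \mathrm{Ch}$ is the second $A_\infty$-functor, one has by construction of $\mathrm{BN}$ that the cohomology of $G(L_T,\delta_T)$ is $\Kh^{\rm red}(\widehat T)$. It therefore suffices to identify $G$ with the representable, Yoneda-type functor $\Hom_{\Tw {\rm Fuk}(P^*)}(W_0,-)$.

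I would establish this identification by first observing that $W_0$ is, by construction, the Lagrangian associated with the trivial tangle $T_0$, so that (up to a homotopy equivalence in $\Tw {\rm Fuk}(P^*)$) one has $F(T_0) \simeq W_0$. The $0$-closure of $T$ by $T_0$ is geometrically the gluing along $T_0$, which on the symplectic side should correspond to the Floer pairing with $F(T_0) = W_0$. Concretely, at each vertex $r$ of the cube of resolutions of $T$ one expects a graded isomorphism $\mathrm{HF}^*(W_0, L_{T_r}) \cong V^{\otimes k_r}$, where $k_r$ is the number of components of the closed-up resolution $\widehat{T_r}$ and $V = \FF_2[x]/(x^2)$ is the reduced Khovanov state space; and the face maps in $\delta_T$, which arise from elementary saddle cobordisms between resolutions, should induce the Bar-Natan merge/split maps on these tensor powers via the $A_\infty$-compositions of ${\rm Fuk}(P^*)$.

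The principal obstacle is matching differentials and bigradings. Identifying Floer generators with Khovanov states vertex-by-vertex is a finite computation in the smooth locus of the pillowcase (using its presentation as a quotient of the torus by the hyperelliptic involution, together with the explicit Lagrangian arcs produced from the resolved tangles). However, verifying that the total differential on $\Hom(W_0,(L_T,\delta_T))$—which combines $\delta_T$, the Fukaya-category $\mu^1$, and higher $A_\infty$-compositions arising from the twisted complex structure—matches the Khovanov cube differential requires a careful enumeration of holomorphic polygons together with compatible choices of Maslov and relative quantum gradings. Once this compatibility is established, the corollary follows at once, since $H^*(\Hom(W_0,(L_T,\delta_T))) \cong H^*(G(L_T,\delta_T)) \cong \Kh^{\rm red}(\widehat T)$ as $\ZZ \oplus \ZZ$-bigraded $\FF_2$-vector spaces.
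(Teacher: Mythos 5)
The overall shape of your argument is right — the corollary is meant to follow from the factorization, and the substantive step is identifying the chain complex $\Hom(W_0,(L_T,\delta_T))$ (with the $\mu^1_{\Tw\cL}$ differential) with the reduced Khovanov complex of the closure, which in the paper is exactly the content of Theorem \ref{invariance2}. But two of your key identifications are incorrect.

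First, you assert $F(T_0)\simeq W_0$ and use this to argue that pairing against $W_0$ is "gluing along the trivial tangle." This conflates the two character-variety constructions which the paper is careful to distinguish: the functor $\cF$ is built from the \emph{earringed} traceless variety $R^\nat_\pi$, and $\cF(T_0)=L_0$, an immersed circle (see Equation (\ref{cF}) and Theorem \ref{fukayadudes2}). The test arc $W_0$ is instead $R_\pi(D^3,T_0)$, the variety \emph{without} the earring. It is not in the image of $\cF$, and the asymmetry is essential — as the introduction says explicitly, the two tangles in the decomposition are "not on the equal footing." One cannot reduce $\Hom(W_0,-)$ to $\Hom(F(T_0),-)$; the $W_0$-pairing is a separately constructed representable $A_\infty$-module $\cG_{W_0}:\cL\to Ch$ (Section \ref{cGk}), not the Yoneda image of an object coming from $\cF$.

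Second, your vertex-by-vertex identification $\mathrm{HF}^*(W_0,L_{T_r})\cong V^{\otimes k_r}$ with $V=\FF_2[x]/(x^2)$ and $k_r$ the number of components of $\widehat{T_r}$ has the wrong dimension: that is the \emph{unreduced} state space, of dimension $2^{k_r}$. The correct count follows from $(W_0,L_0)=\langle\alpha,\beta\rangle$ being $2$-dimensional and $(W_0,L_1)=\langle\gamma\rangle$ being $1$-dimensional (Equation (\ref{fukgens2})): since closing a type-$T_0$ resolution adds two components and closing a type-$T_1$ resolution adds one, one gets $\dim \Hom(W_0,\cF(T_r))=2^{k_r-1}$ in both cases, matching $A^{\otimes(k_r-1)}\otimes R$ with $R=\langle x\rangle$ one-dimensional, which is the reduced state space. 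The proof of Theorem \ref{invariance2} makes exactly this bookkeeping precise, including the grading shifts and the verification (via Theorem \ref{nohigherdif}) that no higher $\cG_0^n$ terms contribute, so the differential really is the Khovanov cube differential and not something with spurious higher corrections.
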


The corollary provides a symplecto-geometric interpretation of Khovanov homology, which should be compared to  the {\em symplectic Khovanov homology} of Seidel and Smith \cite{SeidelSmith}.  In that theory there is a specific Lagrangian embedding $(S^2)^m\hookrightarrow \mathcal{Y}_{m,t}$, where the manifold  $\mathcal{Y}_{m,t}=\pi^{-1}(t)$  arises as a fiber in a fibration over $\mathrm{Conf}^0_{2m}(\mathbb{C})$, the configuration space of massless collections of $2m$ distinct points.  The fibration is naturally defined by the geometry of the adjoint action of SL$_{2m}(\mathbb{C})$ on $\mathfrak{sl}_{2m}(\mathbb{C})$.   A closed $m$-stranded braid induces a monodromy map on the fiber, and their invariant is the Lagrangian Floer cohomology of the $(S^2)^m$ above with its image under the monodromy.  They conjectured their invariant coincides with Khovanov homology of the closed braid, and this was later confirmed by Abouzaid and Smith \cite{AS1,AS2}.

Our theory is in some sense much simpler, taking place as it does in a fixed Riemann surface where all Floer differentials and higher $A_\infty$-operations  are combinatorial in nature.  It has the deficit, however, of being defined in the spirit of Khovanov homology; namely, our twisted complexes are built from a collection of Lagrangians, each arising as the traceless character variety of a complete resolution of the crossings in a diagram of the given tangle. The size of our twisted complex is therefore exponential in the number of crossings in the tangle diagram.

We arrived at the constructions of the present paper by attempting to iterate an exact triangle in the Fukaya category of the pillowcase  arising from the traceless character varieties of the three 2-stranded tangles defining the unoriented skein relation.  The pillowcase homology of \cite{HHK2} mentioned above  possesses a long exact sequence for the unoriented skein relation as a consequence of this exact triangle in the Fukaya category.  The present work was an attempt to bypass the computational difficulties involved in (i) understanding the Lagrangians associated to traceless character varieties of tangles, and (ii) proving that the pillowcase homology associated to two such Lagrangians is an invariant of the underlying link.  An obvious trade-off is an apparent increase in the complexity of the twisted complex.    Despite this, a general result of Haiden, Katzarkov, and Kontsevich \cite[Theorem 4.3]{HKK} implies that the additional complexity is unnecessary:
\begin{corollary}\label{realization} The twisted complex $(L_T,\delta_T)$ associated to a tangle may be represented by a isotopy class of immersed curves equipped with local systems.  
\end{corollary}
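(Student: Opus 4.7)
The plan is to invoke \cite[Theorem 4.3]{HKK} essentially as a black box, after checking that the graded symplectic setup used in that paper is compatible with the one underlying $\Tw\operatorname{Fuk}(P^*)$ as employed in Theorem~\ref{thm:main}. Recall that Haiden--Katzarkov--Kontsevich classify objects of the twisted Fukaya category of a graded marked surface: up to homotopy equivalence, every object is a finite direct sum of indecomposables, each of which is represented by an immersed curve (with punctured or boundary-parallel ends on the marked points) together with a local system on that curve. Our corollary is precisely this dictionary applied to the particular object $(L_T,\delta_T)$.

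First I would describe $P^*$ as a graded marked surface in the sense of \cite{HKK}. Topologically $P^*$ is a four-punctured sphere, which has non-zero Euler characteristic; thus a line field (equivalently a grading structure) exists and is unique up to homotopy subject to the boundary/puncture behavior we need. I would pin down this line field by the requirement that the Lagrangians arising in our construction (the traceless character varieties of resolutions, together with $W_0$) admit gradings compatible with those used throughout Sections~\ref{TwistedSister} and \ref{mainresults}. This amounts to comparing two Maslov-grading conventions on the same punctured surface, so the check is essentially bookkeeping.

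Next I would verify that the flavor of Fukaya category used in \cite{HKK} agrees with the one in which $(L_T,\delta_T)$ is defined. Both are $A_\infty$-categories built from graded Lagrangian arcs/curves in a surface, with $A_\infty$-structure maps defined by counting immersed polygons. The main point to verify is that the boundary behavior at the punctures matches: our Lagrangians are either compact (closed curves avoiding the punctures) or terminate at the punctures in the way the HKK framework admits. Given this, any object of $\Tw\operatorname{Fuk}(P^*)$, including $(L_T,\delta_T)$, is an object of the category to which \cite[Theorem 4.3]{HKK} applies, and the theorem delivers a quasi-isomorphism to a direct sum of immersed-curve-with-local-system objects, yielding Corollary~\ref{realization}.

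The main obstacle I anticipate is precisely this compatibility step: reconciling conventions for gradings, for the treatment of the punctures (stops/boundary components versus true punctures), and for the class of admissible Lagrangians, so that the HKK classification can be quoted without modification. Once these conventions are aligned, the corollary is a direct consequence of \cite[Theorem 4.3]{HKK}, with no further computation involving $T$ or the cube of resolutions required.
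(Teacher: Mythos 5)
Your proposal takes essentially the same approach as the paper: the paper simply cites \cite[Theorem 4.3]{HKK} as the source of the result (and credits Kotelskiy with the observation), without spelling out a proof. Your discussion of the compatibility checks — matching graded marked-surface structures, boundary behavior at the punctures, and the flavor of Fukaya category — fills in details that the paper leaves implicit, but the underlying argument is the same.
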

Of course if one were given the collection of immersed curves with local systems associated to $T$, computing its Floer cohomology with $W_0$ -- and hence the Khovanov homology -- would be easy.  Indeed, it would be expressed as a quickly computed function of the geometric intersection number of $T$ with $W_0$ and the ranks of the local systems involved.   It would therefore be very interesting to have a concrete way of producing  curves with local systems from our twisted complexes.  Such a method has been implemented in the case of bordered Heegaard Floer invariants of 3-manifolds with torus boundary by Hanselman-Rasmussen-Watson \cite{HRW}.    In our context,  an algorithm of this form could lead to  more efficient divide and conquer approaches to Khovanov homology computations, and to rank inequalities for the Khovanov homologies of links which differ by tangle substitutions.

An interesting aspect of our work is that, while our construction grew out of an attempt to understand singular instanton homology -- a theory which is the $E_\infty$ page of a spectral sequence beginning at Khovanov homology \cite{KM2} -- we have only recovered Khovanov homology.   A general feature of twisted complexes over an $A_\infty$ category is that their morphism spaces are naturally {\em filtered} cochain complexes.  Our initial expectation was that the spectral sequence associated to the filtration of $\Hom(W_0,(L_T,\delta_T))$ would be isomorphic to Kronheimer and Mrowka's spectral sequence \cite{KM2}.  That our spectral sequence collapses at Khovanov homology seems interesting, and motivates the question:

\begin{question} Can one add higher  order terms to the differential on $(L_T,\delta_T)$ so that, upon pairing with $W_0$, the resulting spectral sequence  is isomorphic to Kronheimer and Mrowka's?\end{question}

A key feature of our construction, and which underlies the collapse of the spectral sequence at Khovanov homology, is the structure of a particular full subcategory $\cL\subset {\rm Fuk}\ P^*$ of the Fukaya category of the punctured pillowcase.  This category is generated by the  two  immersed  Lagrangians corresponding to the traceless character varieties of the two planar 2-tangles in a 3-ball without closed components.  Indeed, while we have stated our results in the introduction in terms of $\Tw {\rm Fuk}\ P^*$, our invariant actually take values in $\Tw \cL$.  It is for this reason that we are able be somewhat careless in our treatment of  ${\rm Fuk} \ P^*$, which should really be a version of the wrapped Fukaya category.  Since the Lagrangians in $\cL$ are compact and we at present only have occasion to pair them with a single non-compact arc, we neglect to account for these subtleties at the moment.

The $A_\infty$-algebra generated by $\cL$ bears striking similarities to Khovanov's arc algebra $H^2$ \cite{KhTangle} and the analogous (isomorphic) algebra appearing in Bar-Natan's work \cite{BN}.   An important technical result for our set-up is a complete computation of the subcategory $\cL$.  We paraphrase it as follows:

\begin{theorem}(Theorem \ref{maincalc} and Proposition \ref{Feasy}) The $A_\infty$-algebra \[ A_\cL := \bigoplus_{L_i,L_j\in\cL} \hom_\cL(L_i,L_j)\]  is 12 dimensional and minimal i.e. $\mu^1\equiv 0$.  As an ungraded algebra, it is isomorphic to Khovanov's arc algebra $H^2$.  Moreover  $\mu^n\equiv 0$ for all $n>3$.  However, there are 24 non-trivial $\mu^3$ operations.
\end{theorem}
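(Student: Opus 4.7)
The plan is to carry out a direct combinatorial computation in the pillowcase, exploiting the fact that Floer theory on an exact surface reduces to enumerating immersed polygons. First I would identify the two Lagrangians of $\cL$ explicitly as immersed curves in $P^*$. Each arises as the restriction to the Conway sphere of the traceless character variety of a crossingless 2-tangle in $D^3$; these have been described in \cite{HHK1,HHK2}, and by lifting to a fundamental domain (or to the torus double cover) one can draw them precisely as immersed closed curves with a small, explicit number of self-intersections. The generators of $A_\cL$ are the intersection points of these curves with themselves and with each other, together with idempotents at each object. A careful enumeration should exhibit exactly 12 such generators, giving the claimed dimension, and one reads off the bigrading from the local Maslov index and the combinatorial homological grading convention described in the earlier sections.

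Next I would show $\mu^1 \equiv 0$ by enumerating immersed bigons in $P^*$ with corners at the generators. Since $\cL$ sits inside $\Tw\,{\rm Fuk}(P^*)$ on a surface, this is a finite combinatorial check once the Lagrangians and their intersection points are drawn. To identify $A_\cL$ with the arc algebra $H^2$ as an ungraded algebra, I would enumerate all immersed holomorphic triangles contributing to $\mu^2$, record the multiplication table in a basis matched to the natural basis of $H^2$, and observe the isomorphism. Here the local picture of the curves near each puncture mirrors the cap/cup concatenation structure defining $H^2$, so the matching is essentially forced.

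The harder parts are the identification of $\mu^3$ and the vanishing of higher operations. For $\mu^3$, I would enumerate immersed rectangles with vertices at generators. A global argument, using the fact that the complement $P^* \setminus (L_0 \cup L_1)$ decomposes into finitely many regions of bounded combinatorial type, should reduce the enumeration to finitely many orbits and yield the count of 24. For $\mu^n$, $n>3$, the plan is to bound the size of contributing polygons: using an action/area filtration or, more likely, a direct combinatorial analysis of the cell structure of the complement (noting which regions can appear as faces of an immersed polygon with all corners at intersection points), I would argue that no $(n{+}1)$-gon with $n \geq 4$ can be immersed into $P^*$ with boundary on $L_0 \cup L_1$ and corners at the generators.

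The main obstacle is the last step: ruling out all higher polygons. Direct enumeration becomes infeasible in principle since polygons can wrap arbitrarily, so the argument must be structural. The most promising route is to classify the closures of the regions in $P^* \setminus (L_0 \cup L_1)$ and show that the gluing combinatorics of an immersed polygon with all corners at generators force its area to be bounded by that of the largest rectangle, thereby ruling out $\mu^n$ for $n > 3$. Consistency of this polygon count with the $A_\infty$-relations (already forced by the fact that $A_\cL$ is a subcategory of a genuine $A_\infty$-category) would provide a useful sanity check on the enumeration.
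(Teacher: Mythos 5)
Your outline correctly identifies the overall structure of the proof (draw the curves, count the twelve generators, enumerate bigons, triangles and rectangles to get $\mu^1,\mu^2,\mu^3$, match the $\mu^2$ table with $H^2$), and the enumeration of small polygons is indeed a finite, pencil-and-paper task once the curves are in a normal position such as Figure~\ref{Heraldlemmafig}. Where your plan diverges from the paper, and where I think it has a genuine gap, is in the mechanism you propose for ruling out $\mu^n$ with $n\geq 4$.

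First, a technical point you gloss over: the polygons counted by $\mu^n$ do not have boundary on $L_0\cup L_1$. The $k$-th edge lies on the $k$-th Hamiltonian pushoff (cf.\ the definitions preceding Equation~\eqref{stmps} and Theorem~\ref{lookslike}), and the generators of the morphism spaces are intersection points of \emph{distinct} pushoffs. Near each of the self-intersection points of $L_0$ and $L_1$, the successive pushoffs accumulate into a grid (Figure~\ref{griddyfig}), and it is precisely in these grid neighborhoods that a hypothetical large polygon would have to pack in many of its corners. A classification of the components of $P^*\setminus(L_0\cup L_1)$ does not see this grid at all, so it cannot by itself bound the number of corners.

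Second, your proposed area bound is not obviously available. An immersion can cover a region many times, and in a surface with cylindrical ends there is no a priori reason the area of an immersed convex polygon with corners at the fixed set of generators is bounded by that of the largest ``rectangle.'' You would need a separate argument that an immersed polygon with convex corners on the given (perturbed) curves cannot multiply cover, which is not at all clear and is essentially the content that needs proving.

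The paper's route is different and supplies exactly the missing structural input. It proves a purely topological lemma about monogons in the boundary of an immersed disc in $S^2$ (Proposition~\ref{monocle}): any negative monogon on the boundary of an immersed disc must enclose another monogon in its image. Applied to the grid picture near a double point, this shows that at most two consecutive corners of a contributing polygon can be mapped into a neighborhood of any one double point; combined with the local analysis of the $d_0$-type regions (Figure~\ref{twistyfig}), this forces any contributing polygon to have at most four corners, which is the bound you want. That lemma is doing the work that an area argument would have to do, but it does so combinatorially, immersion by immersion, without needing any metric control. If you want to salvage the area/region-counting route, you would need an analogue of the monogon lemma anyway to control multiple covering, so you might as well start there.

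One smaller remark: the identification with $H^2$ in Proposition~\ref{Feasy} is via the larger, unreduced Bar-Natan algebra $A_{\widetilde\cD}$, and is an ungraded isomorphism only; as the paper notes, $A_\cL$ and $A_{\widetilde\cD}$ are \emph{not} isomorphic as graded algebras. Your proposal should be careful to make the comparison at the ungraded level, as the theorem statement does.
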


\noindent  It is again interesting to compare our work with symplectic Khovanov homology.  A key ingredient  in the proof that symplectic Khovanov homology agrees with Khovanov homology was a formality result for a particular $A_\infty$ subcategory of ${\rm Fuk}\ \mathcal{Y}_{m,t}$ which is quasi-equivalent to the symplectic analogue of $H^n$ \cite{AS1}.  

Our theorem suggests that, unlike the symplectic arc algebra, the category $\cL$ is probably not formal, since it is minimal, yet possesses non-trivial higher operations.
  Despite the non-trivial $\mu^3$ maps, the differentials in the spectral sequence associated to the filtration of $\Hom_{\Tw {\rm Fuk}(P^*)}(W_0,(L_T,\delta_T))$ vanish;  it would be interesting to have a better conceptual explanation of this fact,
 or, alternatively, explicit examples of twisted complexes  $(D,d)$ associated to tangles so that the spectral sequence associated to the filtration of $\Hom_{\Tw {\rm Fuk}(P^*)}((D,d),(L_T,\delta_T))$ has non-trivial higher differentials.

Finally, we compare our work to analogous results in Heegaard Floer theory.  In that setting a $3$-manifold whose boundary is a parametrized surface is assigned, by {\em bordered Floer homology}, an $A_\infty$-module over an $A_\infty$-algebra associated to the surface \cite{LOT}.  Similar structures exist for tangles \cite{Petkova}.  Originally defined using the symplectic field theoretic framework of Lipshitz's cylindrical version of Heegaard Floer theory \cite{Lipshitz}, the bordered theory was recast in terms of $A_\infty$-modules over a particular $A_\infty$-algebra generated by a collection of Lagrangians in the partially wrapped Fukaya category of a symmetric product of the boundary surface (suitably punctured) \cite{auroux2,auroux3,LekiliPerutz}. Important special cases have been worked out in concrete detail; for 3-manifolds with torus boundary by Hanselman-Rasmussen-Watson \cite{HRW}, and for 2-stranded tangles by Zibrowius \cite{Zibrowius}.     These are  formally analogous to our framework, and our invariant should be viewed as a type of bordered Khovanov invariant for a 2-tangle.   It would also be interesting to compare ours to other, more combinatorial constructions, of bordered Khovanov invariants, e.g. \cite{RobertsI, RobertsII,Manion,KhTangle}.   In light of the bordered interpretation, one would expect that the cohomology of the hom spaces between two 2-tangles is isomorphic to the Khovanov homology of their union.   A technical point in our construction inherited from its origins in gauge theory is that the two tangles in our decomposition of a given link are not on  equal footing:  one is endowed with an ``earring" which allows us to define a non-trivial $SO(3)$ bundle and avoid the reducibles.  Concretely, this allows us to work with twisted complexes built from compact objects in the Fukaya category.  For this reason, the correct interpretation of our hom spaces is as follows:
\[ H^*(\Hom((L_V,\delta_V),(L_T,\delta_T)))\cong \Kh^{\rm red}( \overline{V}\cup T)\otimes \mathbb{F}^2\]
\noindent In words, the Khovanov homology of the link arising as the union of a tangle $T$ with the mirror of a tangle $V$ can be recovered, up to tensoring with a particular two dimensional vector space, as the cohomology of the space of morphisms between the twisted complexes we associate to $V$ and $T$, respectively.    We should point out that similar results in these directions, and more precise comparisons with bordered Floer homology, have been obtained independently by Kotelskiy \cite{Kotelskiy,Kotelskiy2}.\\

 \noindent {\bf Organization:}
 In the next section, we briefly review basic notions concerning $A_\infty$-categories, $A_\infty$-functors, and twisted complexes in detail suitable for our purposes.  Section \ref{fig8lag} introduces the pillowcase $P$, its smooth stratum $P^*$,  and the immersed curves $L_i,W_i$. We defer to Appendix \ref{tracelesssect} an explanation of how these arise as traceless flat moduli spaces of tangles, and a proof that the Lagrangian correspondence associated to a $0$-handle cobordism has the effect on traceless moduli spaces of doubling each component or, more algebraically, of taking a tensor product with a 2-dimensional vector space.

 Section \ref{fukayacomp}  summarizes the calculation of the  full $A_\infty$-subcategory $\cL\subset  {\rm Fuk} \ P^*$ of the Fukaya category  of curves in the pillowcase, given in Theorems \ref{maincalc} and \ref{testthm}.  We relegate the rather technical work involved in these calculations to Appendix \ref{APa}; there, we indicate how the calculations are carried out, and in particular why the relevant $\mu^n$ vanish for $n\ge 4$. Section  \ref{sec:gradings} discusses gradings  and  we construct $\ZZ$-gradings in $\cL$ (lifting relative $\ZZ/4$-gradings coming from the gauge theoretic considerations of \cite{HHK2}).
 
 Section \ref{BNsect} introduces a variant of Bar-Natan's \cite{BN} 2-tangle category from  which we produce, in Section  \ref{BNsect}, an $A_\infty$-functor to our Fukaya category $\cL$.  This functor extends to a functor on the triangulated envelope of twisted complexes, and the image of Bar-Natan's twisted complex associated to a tangle is the twisted complex over $\cL$ which we assign to a tangle diagram.
 Section  \ref{BNsect} also produces   functors  from $\cL$ to the dg-category of chain complexes by evaluating against test curves,  such as $W_0$ and $W_1$.  
 
 The properties of these functors and their extensions to  twisted complexes are explored in Sections \ref{functors} and \ref{TwistedSister}, culminating in  Section \ref{mainresults}, which contains our main results, notably exhibiting that the homotopy class of the  twisted complex over $\cL$ which we assign to a 2-tangle is an isotopy invariant of the tangle, and that evaluating against the test curves $W_0$ and $W_1$ yields the $\ZZ\oplus\ZZ$-bigraded  reduced Khovanov chain complexes of the two planar closures of the given tangle.  We illustrate the construction in Section \ref{example}, with an example for a 2-tangle whose closure is the trefoil knot.  
 \\

\noindent{\bf{Acknowledgements:}} The authors are pleased to thank Guillem Cazassus, Artem Kotelskiy, Henry Horton, and Chris Rogers,  for extremely helpful discussions. Kotelskiy, in particular, pointed out Corollary \ref{realization} to us and informed us of his closely related work. The authors also thank the referee for suggestions which greatly improved the exposition.   Matthew Hedden was partially supported by NSF CAREER grant DMS-1150872, NSF DMS-1709016 and an Alfred P. Sloan Research Fellowship during the course of this work. Chris Herald and Paul Kirk were partially supported by Simons Collaboration Grants for Mathematicians.  Matthew Hogancamp was partially supported by NSF DMS-1702274.

\section{$A_\infty$-categories, Additive enlargements and twisted complexes} \label{category}

In this section we briefly review the  notions of  $A_\infty$-categories, their additive enlargements and twisted  envelopes, and of $A_\infty$-modules and $A_\infty$-functors. We follow and refer the reader to Seidel's book \cite{Seidel} for careful constructions and precise definitions; our aim here is simply to collect notation and introduce  terminology to the uninitiated reader.  Since we  restrict our attention throughout the paper to  vector spaces over the field with two elements, which we  will denote by $\FF$,  our discussion of the subject will be eased by an omission of any signs.

An {\em $A_\infty$-category} $\cA$ consists of a collection of objects $X$, graded $\FF$ vector spaces $\Hom_\cA(X_0,X_1)$ for any pair $(X_0,X_1)$ of objects,  and for $n\ge 1$ linear structure maps of degree $2-n$,
$$\mu^n_\cA:\Hom_\cA(X_{n-1},X_{n})\otimes\dots\otimes \Hom_\cA(X_{0},X_{1})\to \Hom_\cA(X_{0},X_{n}),$$ 
which satisfy the $A_\infty$-relations  \cite[Equation (1.2)]{Seidel}.  The first two relations are
$$\mu^1_\cA \left( \mu^1_\cA(a_1)\right) =0 \text{ 
and }
\mu^1_\cA \left( \mu^2_\cA(a_2,a_1)\right) +\mu^2_\cA\left(\mu^1_\cA(a_2),a_1\right)
+\mu^2_\cA\left(a_2, \mu^1_\cA(a_1)\right)=0$$

An $A_\infty$-category is called {\em strictly unital} provided that for each object $X$ there exists an $e_X\in \Hom_\cA(X,X)$ of grading 0 so that 
$\mu^1_\cA(e_X)=0,$ $\mu^2_\cA(e_X,a_1)=a_1, $ $ \mu^2_\cA(a_1, e_X)=a,$ and 
$$\mu^n_\cA(a_n,\dots, a_{k+1}, e_X, a_{k-1}, \dots, a_1)=0,~ n>2$$
for any $a_i$ for which these expressions are defined.  A strictly unital  $A_\infty$-category for which $\mu^n=0$ when $n>2$ is called a differential graded category, or dg-category.  

\medskip

A strictly unital  $A_\infty$-category $\cA$ has a {\em cohomology category} $H(\cA)$; its objects are the  same  as the objects of $\cA$ and its morphisms are given by $$\Hom_{H(\cA)}(X,Y)=\frac{\ker \mu^1_\cA:\Hom_\cA(X,Y)\to \Hom_\cA(X,Y)}{\text{Image }\mu^1_\cA:\Hom_\cA(X,Y)\to \Hom_\cA(X,Y)},$$
with   composition induced by $\mu^2_\cA$.  A useful consequence for the present article is that if $\cA$ is a strictly unital $A_\infty$-category satisfying  $\mu^1_\cA=0$, then $\Hom_{H(\cA)}(X,Y)= \Hom_\cA(X,Y)$ and hence  in $\cA$,  the composition $\mu^2_\cA$ is associative and unital. Thus ignoring  all the $\mu^n$ for $n>2$ yields a category.  

\medskip
 
\begin{example}  A  trivial example of a strictly unital $A_\infty$-category is the category   of  graded vector spaces with all linear maps between these vector spaces, graded by degree, as its morphisms. Take $\mu^2$ to be composition and $\mu^n=0$ for $n\ne 2$.  The identify morphism serves as $e_X$. 
  \end{example}
  
  \begin{example}\label{exofCh} Another rather trivial example, which nonetheless will turn out to be relevant to the present article, is the dg-category of chain complexes, denoted by $Ch$.  The objects of $Ch$ are graded $\FF$ vector spaces $C$ equipped with endomorphisms $d_C: C\to C$ of degree 1, satisfying $d_C \circ d_C=0$.  Given $C_1,C_2\in \Obj(Ch)$, the morphism space $\Hom_{Ch}(C_1, C_2)$ is the  graded vector space  of all linear maps from $C_1$ to $C_2$,  graded by degree.  The structure maps $\mu^r _{Ch} $ are zero except when $r=1,2$.  For $a\in \Hom_{Ch}(C_0,C_1)$, $\mu^1 _{Ch} (a) = d_{C_1}\circ a + a\circ d_{C_0}$, and $\mu^2 _{Ch}$ is just composition of linear maps.    The morphism spaces of the associated homology category are simply the vector spaces generated by chain homotopy classes of chain maps.
  \end{example}

  \medskip
  A  (left)  {\em $A_\infty$-module $\cM$ over an $A_\infty$-category $\cA$} consists of  a graded vector space $\cM(X)$ for each object $X\in \cA$, together with structure maps 
  $$\mu^n_\cM:\Hom_\cA(X_{n-1},X_{n})\otimes\dots\otimes \Hom_\cA(X_{1},X_{2})\otimes \cM(X_{1})\to \cM(X_{n})$$
  which satisfy the obvious analogues of the $A_\infty$-relations (see \cite[Section (1j)]{Seidel}).  For example, one can fix an object $W\in \cA$ and define $\cM(X)=\Hom_\cA(W,X)$.   Taken together, these modules comprise the {\em Yoneda embedding}.

\medskip

An $A_\infty$-{\em functor} $\cF:\cA\to \cB$ consists of an assignment ${\rm Obj}(\cA)\ni X\mapsto \cF(X)\in  {\rm Obj}(\cB)$ together with a sequence of multilinear maps of degree $1-n$
$$\cF^n:\Hom_\cA(X_{n-1},X_n)\otimes\dots\otimes\Hom_\cA(X_{0},X_1)\to \Hom_\cB(\cF(X_{0}),\cF(X_n)),~ n=1,2,\dots$$
satisfying certain compatibility equations with respect to the structure maps \cite[Equation (1.6)]{Seidel}.  Note that an $A_\infty$-module over $\mathcal{A}$ is simply an $A_\infty$-functor from $\mathcal{A}$ to $Ch$.

An $A_\infty$-functor between strictly unital $A_\infty$-categories is called {\em strictly unital} provided that $\cF(e_X)=e_{\cF(X)}$  and 
$$\cF^n(a_n,\dots, a_{k+1}, e_X, a_{k-1}, \dots, a_1)=0, n\ge 2.$$ 
for all objects $X$ of $\cA$.

\medskip

\begin{example}\label{exmorita} Given any $A_\infty$-category $\cA$ and object $A\in \Obj(\cA)$, one can define an $A_\infty$-functor $\cG_A$ from $\cA$ to  $Ch$, by
$$\cG_A:{\rm Obj}(\cA)\to {\rm Obj}(Ch), ~ \cG_A(C)=\big(\Hom_\cA(A,C),\mu^1_\cA\big)$$
and
 \begin{multline}
\label{vsfunctor}
\cG_A^n:\Hom_\cA(C_{n-1}, C_n)\otimes\dots\otimes \Hom_\cA(C_0,C_1)\to \Hom_{Ch} (\Hom_\cA(A,C_0),\Hom_\cA(A,C_n))\\
\cG_A^n(a_n,\dots, a_1)(b)= \mu^{n+1}_\cA(a_n,\dots, a_1, b)\hskip1.2in
\end{multline}
The $A_\infty$-relations satisfied by the higher multiplication operations $\mu^n _{\cA}$ translate into the required conditions for $\cG_A$ to be an $A_\infty$-functor.

 \end{example}

Given an $A_\infty$-category $\cA$,  its {\em additive enlargement} $\Sigma\cA$  is the $A_\infty$-category  with objects consisting of triples $(I,\{V_i\}_{i\in I}, \{X_i\}_{i\in I})$ where $I$ is a finite indexing set, the $X_i$ are objects of $\cA$,  and the $V_i$ are graded $\FF$ vector spaces, 
 We  indicate the gradings by a superscript.  As usual, we use the more convenient notation
$$\bigoplus_{i\in I} V_i\otimes X_i$$
for such a triple.  
 The morphism spaces are given by 
$$\Hom_{\Sigma\cA}\big(\bigoplus_{i} V_i\otimes X_i,\bigoplus_{j} V_j\otimes X_j\big)=\bigoplus_{i,j}\Hom_{\FF}(V_i,V_i)\otimes \Hom_\cA(X_i,X_j).
$$
The vector spaces $\Hom_{\FF}(V_i,V_j)\otimes \Hom_\cA(X_i,X_j)$ are given    the tensor product grading. The structure maps $\mu^n_{\Sigma \cA}$  are  given by composing the linear homomorphism  factors and applying $\mu^n_\cA$ to the $\Hom_\cA(X_i,X_j)$ factors.   

If $\mu^n_\cA=0$ for $n>k\ge 2$, then $\mu^n_{\Sigma\cA}=0$ for $n>k$.
The $A_\infty$-category $\Sigma\cA$ admits a {\em shift} operation which replaces $V\otimes X$ by $(V\otimes X) \{\sigma\}:=V\{\sigma\}\otimes X$, where $V\{\sigma\}$ is obtained from the graded vector space $V$ by shifting down by $\sigma$, $$\left(V\{\sigma\}\right) ^{d}=V^{ d+\sigma }.$$  If $\alpha:W\to V$ is a homomorphism of degree $k$, then $\alpha$ induces a homomorphism $\alpha:W\{i\} \to V\{j\}$ of degree $k-j+i$ (which we continue to denote by $\alpha$).  Note that $\cA$ includes into $\Sigma\cA$ via the $A_\infty$-functor $X\mapsto \FF\otimes X$.

\medskip

The $A_\infty$-category $\Tw \cA $ of {\em twisted complexes over $\cA$} has objects $(X,\delta)$ where $X$ is an object of $\Sigma \cA$ and  $\delta\in \Hom_{\Sigma\cA}(X,X)$ has grading 1 and satisfies 
$$\sum_n\mu^n_{\Sigma\cA}(\delta,\dots,\delta)=0.$$
In addition, $X$ is required to admit a filtration with respect to which  $\delta$ is strictly lower triangular.
The morphism spaces are the same as in $\Sigma\cA$:
$$\Hom_{\Tw \cA}((X, \delta_X), (Y,\delta_Y))=
\Hom_{\Sigma\cA}( X,Y).$$
 The structure maps $\mu^n_{\Tw \cA}$ incorporate $\delta$ as well as the morphisms that are the inputs.  We refer to Equation (3.20) of \cite{Seidel} for the general definition, but list here the first few terms in the formulas, from which the reader can easily obtain the general formula. \begin{align*}\mu^1_{\Tw \cA}(a)&=\mu^1_{\Sigma \cA}(a) +
\mu^2_{\Sigma \cA}(\delta_Y,a)+\mu^2_{\Sigma \cA}(a,\delta_X) \\ &\qquad +
\mu^3_{\Sigma \cA}(\delta_Y,\delta_Y,a)+\mu^3_{\Sigma \cA}(\delta_Y, a,\delta_X)+\mu^3_{\Sigma \cA}(a,\delta_X,\delta_X)+\cdots,
\end{align*}
$$\mu^2_{\Tw \cA}(a_2,a_1) =\mu^2_{\Sigma \cA}(a_2,a_1) +
\mu^3_{\Sigma \cA}(\delta_Z,a_2,a_1)+\mu^3_{\Sigma \cA}(a_2,\delta_Y,a_1)+\mu^3_{\Sigma \cA}(a_2,a_1,\delta_X)+\cdots,  $$
and
$$\mu^3_{\Tw \cA }(a_3,a_2,a_1) =\mu^3_{\Sigma \cA}(a_3,a_2, a_1)+\cdots.
$$

The filtration requirement ensures that these are finite sums. Our examples  have $\mu^n_{\Sigma\cA}=0$ for $n\ge 4$, so that these three formulas will suffice for our needs. 

The $A_\infty$-category $\cA$, as well as its additive enlargement $\Sigma\cA$,  fully faithfully embeds in $\Tw \cA$ via $X\mapsto (X,0)$. 
If $\cA$ is strictly unital, then so is $\Tw \cA$. A (strictly unital) $A_\infty$-functor $\cF:\cA\to\cB$ extends to a (strictly unital)
$A_\infty$-functor $\Tw \cF :\Tw \cA\to\Tw\cB.$

\begin{example} \label{exofCh2}  Continuing Example \ref{exofCh}, it is easy to see that the additive enlargement of the dg-category $Ch$ of chain complexes is again $Ch$.  

If  $((A,d_A),\delta)\in\text{Obj}(\Tw(Ch))$, then
the   endomorphism $\delta:(A, d_A)\to (A,d_A) $ satisfies   
$$0=\sum_n \mu_{Ch} ^n (\delta, \dots, \delta )   =\mu_{Ch} ^1 (\delta) + \mu_{Ch} ^2 (\delta, \delta) = d_A \circ \delta + \delta \circ d_A + \delta \circ \delta.$$ Since $d_A\circ d_A=0$, this is equivalent to the condition that $d_A + \delta \in \Hom_{Ch} (A,A)$ has square zero, i.e.,  that $(A, d_A+\delta)$ is again a chain complex. 
 The assignment 
 \begin{equation*} 
((A,d_A),\delta) \mapsto (A, d_A+\delta) 
 \end{equation*}
extends to a dg-functor $\Tw(Ch)\to Ch$.   To avoid burdensome notation, we will not give this functor a formal name. \end{example}

 \section{Figure 8 Lagrangians in the pillowcase}\label{fig8lag}

The {\em pillowcase} $P$ is the quotient of $\RR^2$ by the group generated by the three homeomorphisms $(\gamma,\theta)\mapsto(\gamma+2\pi, \theta)$, $(\gamma,\theta)\mapsto (\gamma,\theta+2\pi)$ and $(\gamma,\theta)\mapsto(-\gamma,-\theta)$. Equivalently, it is $\TT^2/(\ZZ/2)$, the quotient of the torus by the elliptic involution $(e^{\bbi\gamma}, e^{\bbi\theta})\mapsto (e^{-\bbi\gamma}, e^{-\bbi\theta})$. The lattice $(\pi\ZZ)^2$ maps to four points in $P$ which we call the {\em singular points}, and we denote the complement of these four singular points by $P^*$.    Note that the symplectic form $dx\wedge dy$ on $\RR^2\setminus (\pi\ZZ)^2$ descends to $P^*$.

\medskip

We define six  immersed curves, $L_0,L_1, L_\times, W_0, W_1,$ and $W_\times$,  in $P$. Fix a small $\delta>0$.
 Denote by $[\gamma,\theta]\in P$ the image of $(\gamma,\theta)\in \RR^2$ under the branched cover $\RR^2\to P$.

\begin{equation}\label{theW}
W_\ell :t\in [0,\pi]\mapsto
\begin{cases}   [t,t]  & \text{ if } \ell=0,\\
[\pi, -t]    & \text{ if } \ell=1,\\
[-t+\pi,0]   & \text{ if } \ell=\times.
\end{cases}
\end{equation}

 \begin{equation}\label{theL}
L_\ell,~t\in \RR/(2\pi)\mapsto
\begin{cases} 
[\tfrac\pi 2 + t +\delta\sin t,\tfrac\pi 2 + t -\delta\sin t]& \text{ if } \ell=0,\\
 [\pi-2\delta\sin t ,\tfrac\pi 2 -t -\delta\sin t] & \text{ if } \ell=1,\\
[\tfrac\pi 2 -t +\delta\sin t,  2\delta\sin t]& \text{ if } \ell=\times.\\
\end{cases}
\end{equation}
   These curves are illustrated in Figure \ref{exacttriplesfig}. The $W_\ell$ are embedded intervals, and the $L_\ell$ are immersed circles with precisely one transverse double point.

\begin{figure} 
\begin{center}
\def\svgwidth{4.4in}
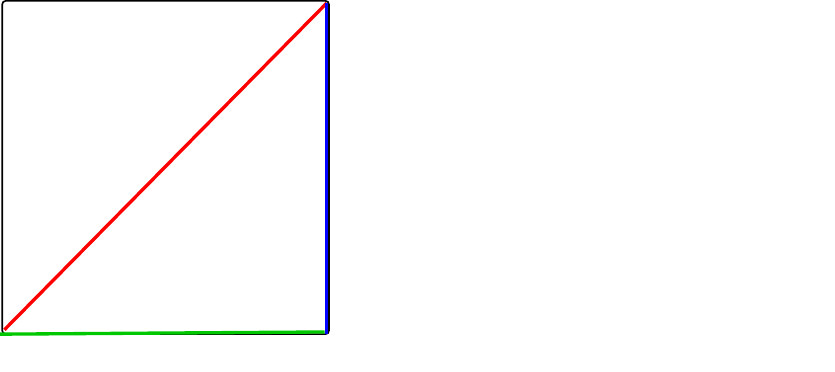
 \caption{\label{exacttriplesfig} }
\end{center}
\end{figure}

The function $f(\gamma,\theta)=-\cos (\gamma)\cos(\theta-\gamma)$ on $\RR^2$ is invariant under the action giving rise to $P$, and hence its Hamiltonian vector field 
 \begin{equation}
\label{hamiltonvf}
X_f= \cos(\gamma)\sin(\theta-\gamma)\tfrac {\partial}{\partial \gamma} +(-\sin(\gamma)\cos(\theta-\gamma)+\cos(\gamma)\sin(\theta-\gamma))\tfrac {\partial}{\partial \theta}
\end{equation}
is as well.  Thus both descend to  the pillowcase.  The vector field $X_f$ is illustrated in Figure \ref{vffig}.     Let $H:=H(\gamma,\theta,s)$ denote  the corresponding Hamiltonian flow on the pillowcase. 

\begin{figure} 
\begin{center}
\includegraphics[width=.6\textwidth]{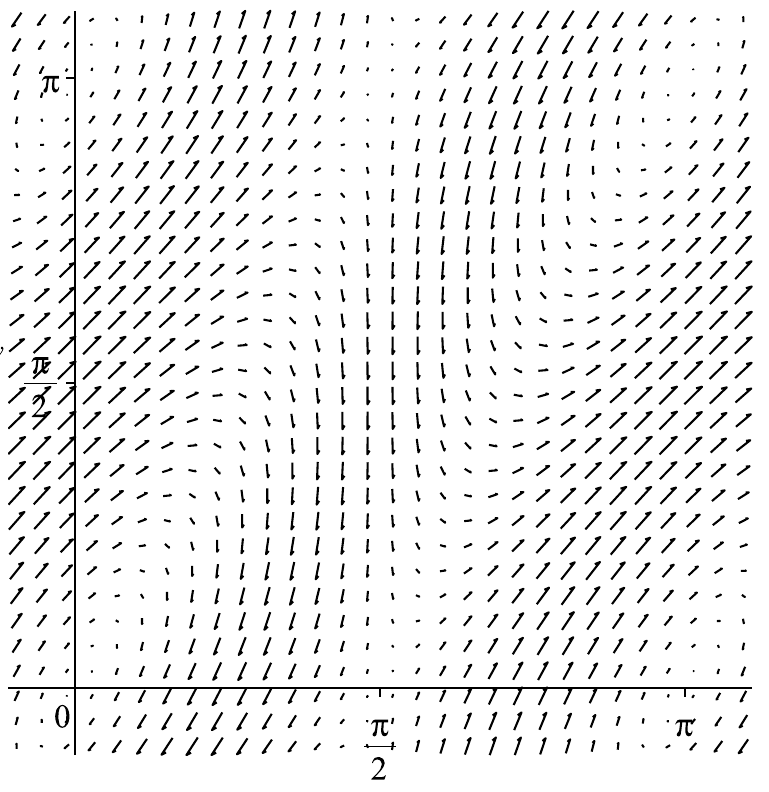}
 \caption{\label{vffig} }
\end{center}
\end{figure}

With $H$  
 defined (and recalling that $L_0$ and $L_1$ depend on a choice of small $\delta>0$),  the following elementary, if somewhat technical, result is readily proved.

\begin{theorem}\label{lookslike} Given a collection $C_i, ~ i=1,\dots ,n$ of curves with each $C_i\in   \{L_0,L_1\}$, there exists  an $\epsilon_0>0$, so that for all increasing sequences $0=s_1<s_2<\dots< s_{n+1}\leq \epsilon_0$, 
the sequence of curves:
\begin{multline*}\hskip 1in C_1=H(C_1,s_1), ~C_2'=H(C_2,s_2), ~C_3''=H(C_3,s_3), \dots, \\ C_n^{(n)}=H(C_n,s_n),  ~W_0^{(n+1)}=H(W_0,s_{n+1})\hskip1in\end{multline*} are pairwise transverse, their union has no triple points,   and up to ambient isotopy are configured in the pattern illustrated in Figure \ref{Heraldlemmafig}. \end{theorem}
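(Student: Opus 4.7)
The plan is to combine an openness-of-transversality argument with a careful analysis of the Hamiltonian vector field $X_f$ normal to each of the base curves $L_0, L_1, W_0$, and then read off the global isotopy class of the configuration from the signs of this normal component.

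First I would observe that $H(\cdot, s)$ is a smooth family of diffeomorphisms of $P^*$ with $H(\cdot, 0) = \mathrm{id}$, so for small $s$ the curve $H(C, s)$ is $C^\infty$-close to $C$. For \emph{distinct} base curves $C_i \neq C_j$ in $\{L_0, L_1, W_0\}$, the parametrizations (\ref{theW})--(\ref{theL}) allow one to enumerate the intersection points at $s=0$ and verify pairwise transversality by direct computation. Openness of transversality then implies that the flowed copies $H(C_i, s_i)$ and $H(C_j, s_j)$ remain transverse for all $s_i, s_j \in [0,\epsilon_0]$ with $\epsilon_0$ sufficiently small, and that their intersection points are $C^\infty$-close to the unperturbed ones.

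The essential case is when $C_i$ and $C_j$ are two flowed copies of the \emph{same} base curve $C$: at $s_i=s_j$ the curves coincide entirely, so transversality must be produced by the flow itself. Intersections of $H(C,s_i)$ with $H(C,s_j)$ correspond bijectively to points $p\in C$ such that $H(p, s_j-s_i)\in C$, and for small $s_j-s_i>0$ these cluster near the zeros of the normal component of $X_f|_C$. From (\ref{hamiltonvf}), a direct calculation along (\ref{theL}) shows that the normal component of $X_f|_{L_\ell}$ has only finitely many isolated, non-degenerate (Morse-type) zeros for small $\delta>0$, so the two flowed copies meet transversely at a bounded number of points for every strictly positive $s_j-s_i \leq \epsilon_0$. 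The final flowed arc $H(W_0, s_{n+1})$ is handled analogously, its intersections with each $C_i^{(i)}$ being small transverse perturbations of $W_0\cap C_i$. Absence of triple points follows by the same mechanism: three strictly ordered flow times $s_i<s_j<s_k$ produce three normal translates at distinct displacements, and the Morse non-degeneracy prevents all three from coinciding simultaneously once $\epsilon_0$ is small enough.

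To pin down the isotopy class realized in Figure \ref{Heraldlemmafig}, I would read off, at each intersection, the side on which the $s>0$ copy lies relative to the $s=0$ copy from the sign of the normal component of $X_f$. The strict ordering $s_1<s_2<\cdots$ then fixes the stacking order of the parallel translates near each intersection, and the global pattern of Figure \ref{Heraldlemmafig} emerges by patching the local pictures together along each arc. The main obstacle I anticipate is precisely this combinatorial bookkeeping in the last step: $X_f$ has several zeros on $P^*$ and its normal component to each $L_\ell$ changes sign as one traverses the curve, so correctly assembling the global isotopy type requires carefully matching local pictures across the regions of constant sign. The transversality and triple-point statements, by contrast, reduce once (\ref{hamiltonvf}) is expanded along the parametrizations (\ref{theW})--(\ref{theL}) to essentially one-variable calculus on each curve.
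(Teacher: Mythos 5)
The paper does not actually give a proof of this theorem; it simply states that "the following elementary, if somewhat technical, result is readily proved" and then moves directly to the statement and the accompanying figure. So there is no argument of record to compare against, and your proposal has to be assessed on its own merits.

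Your overall strategy is sound and is, I believe, the intended one: for distinct base curves transversality is stable under small Hamiltonian perturbation; for two pushoffs of the same base curve $C$ the intersections are governed by the Hamiltonian displacement of $C$ off itself; and the isotopy type of the configuration is recovered by tracking the flow-time ordering along the flow lines of $X_f$. Two points, however, are elided. First, your bijection "intersections of $H(C,s_i)$ with $H(C,s_j)$ correspond to $p\in C$ with $H(p,s_j-s_i)\in C$" is stated as if $C$ were embedded; since $L_0$ and $L_1$ are immersed circles with one transverse double point, there is a second source of intersections — the two nearby transverse points into which the double point splits — that is not "near a zero of the normal component of $X_f|_C$" and must be accounted for separately (these are the generators $d_\ell$ in Figure~\ref{Generatorsfig}). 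Second, the assertion that the normal component of $X_f|_{L_\ell}$ (equivalently, that $f|_{L_\ell}$ is Morse) has non-degenerate zeros for small $\delta>0$ is a genuine computation on which the whole count of intersection points rests, and you state it without carrying it out; the same applies to $f|_{W_0}$.

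The more serious issue is one you flag yourself: the final step — reading off the sign of the normal component of $X_f$ at each intersection, matching local stacking orders across the sign changes, and concluding that the global picture is ambient-isotopic to Figure~\ref{Heraldlemmafig} — is precisely the nontrivial content of the theorem, and the proposal stops at the level of a plan rather than executing it. This is not a wrong approach, but the claim "the pattern of Figure~\ref{Heraldlemmafig} emerges by patching the local pictures together" needs to be backed by an actual enumeration of the critical points of $f$ restricted to $L_0$, $L_1$, and $W_0$, the resulting sign pattern along each curve, and a check that no ambient isotopy of the four-punctured sphere is obstructed. As written, the proposal correctly identifies all the ingredients but does not yet constitute a proof of the configuration statement, which is the part one actually uses downstream in the $\mu^n$ computations.
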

  \begin{figure} 
\begin{center}
\def\svgwidth{2.6in}
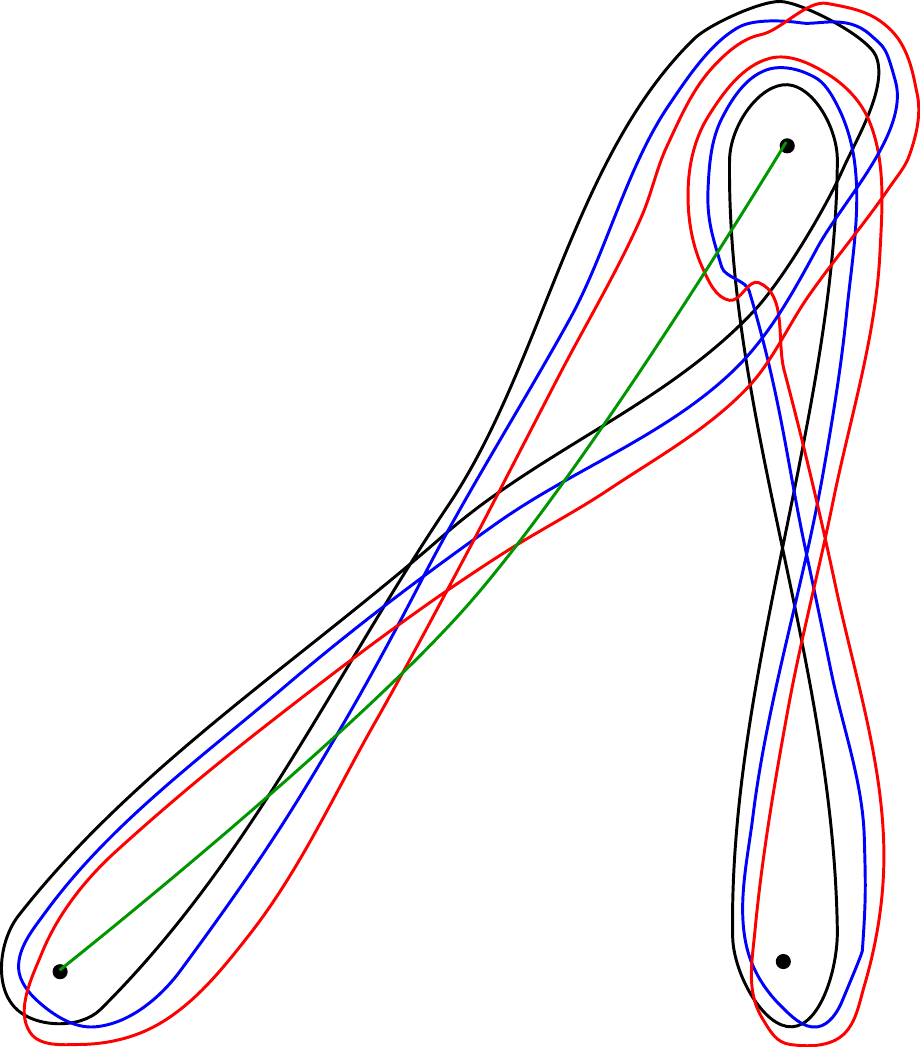
 \caption{\label{Heraldlemmafig}}
\end{center}
\end{figure} 
  Figure \ref{Heraldlemmafig} 
 illustrates the successive Hamiltonian pushoffs $L_0,L_0',L_0''$ and $L_1,L_1'',L_1'''$, as well as $W_0'''$ and $W_1'''$, as in Theorem \ref{hamiltonvf}.   The top stratum $P^*$ of the pillowcase  has been identified with the thrice-punctured plane, such that the singular point with coordinates $(0,\pi)$ is  mapped to $\infty$.  The important features of this figure for the calculation of the $\mu^k$ are the relative locations of the intersection points of $L_i^{(n)}$ with $L_j^{(m)}$ and the  relative locations of the intersection points of $L_i^{(n)}$ with $W_j'''$

\section{Some calculations in the Fukaya category of the pillowcase}\label{fukayacomp}

We will invoke some aspects of the {\em  Fukaya $A_\infty$-category} of $P^*$.  Precise definitions can be found in \cite{abou},  \cite{Seidel}, and \cite{auroux}.  We next review the definitions of the $A_\infty$-category associated to the pillowcase, to set  notation.
Define the objects of an $A_\infty$-category $\cL$ to be set of curves $\{L_0,L_1\}$ in the pillowcase, for a fixed choice of small $\delta>0$ in Equation (\ref{theL}).

Given a curve $C\in  \{L_0,L_1\}$, denote by 
$C', C'', \dots,C^{(n)}$ its successive Hamiltonian pushoffs as in Theorem \ref{lookslike}, for some small $\epsilon_0>0$.

Given $C,D\in  \{L_0,L_1\}$, the curves $C'$ and $D$ intersect transversally. The morphism space $\Hom_\cL(C,D)$, which we frequently abbreviate to $(C,D)$, is defined  as the $\FF$ vector space spanned by the transverse intersection points of $C'$ with $D$
 
  $$(C,D):=\Hom_{\cL} (C,D)=\FF\langle C'\cap  D \rangle.$$

To define the structure maps 
$$\mu^1:(C,D)\to (C,D),~\mu^2:(C,D)\times (B,C)\to (B,D),$$
etc.,
 first observe that given a positive integer $n$, if the successive Hamiltonian pushoffs are chosen sufficiently closely,  there are {\em  canonical closest point} identifications
 $$ (C,D)=\FF\langle C'\cap  D \rangle \cong \FF\langle C^{(n)}\cap  D^{(n-1)} \rangle \cong \FF\langle C^{(n)}\cap  D \rangle.$$

Given an ordered sequence of $n+1$ objects $C_0,C_1,\dots ,C_n$ in $\Obj(\cL)$,  define the multilinear map 
$$\mu^n_\cL:(C_{{n-1}},C_{{n}})\otimes(C_{{n-2}},C_{{n-1}})\otimes\dots\otimes(C_{0},C_{1})\to (C_{0},C_{n})
$$
 by counting equivalence classes of immersed oriented $(n+1)$-gons   with convex corners in $P^*$ whose edges, ordered counterclockwise,   lie on \\
$C_0^{(n)}, C_1^{(n-1)}, \dots, C_{n-1}', C_n$ as explained next.

For $n\ge 1$, let $G_{n+1}$ be a   $(n+1)$-gon in the plane with convex corners, and counterclockwise labelled   vertices $(v_0,\cdots,v_n)$ and edges $E_0,E_1,\dots, E_{n}$.  
More precisely, take $G_2$ to be a bigon with convex corners at $v_0=1$ and $v_1=-1$,
and let $G_{n+1}\subset \RR^2=\CC$ be the convex $(n+1)$-gon with vertices  the $(n+1)$st roots of unity: $v_0=1, v_1=e^{2\pi \bbi/(n+1)},\dots, v_n=e^{2\pi k \bbi/(n+1)}$ .  
Label  the edges of $G_n$,  counterclockwise starting at $1$, by $E_0,E_1,\dots, E_{n}$.

 Given   generators
 $$x_\ell\in C_{\ell-1}'\cap C_{\ell}=C_{\ell-1}^{(n-\ell+1)}\cap C_\ell^{(n-\ell)},$$ for $(C_{\ell-1},C_\ell,), ~ \ell=1,\dots, n $, the morphism
  $\mu^n_\cL(x_n,x_{n-1},\dots, x_1)\in (C_0,C_n)
 $ 
 counts equivalence classes of orientation preserving immersions of $G_{n+1}$ into $P^*$
 $$\iota:G_{n+1}\to P^*, $$
 satisfying 
  $$ \iota(v_\ell)=x_\ell \text{ and } \iota(E_\ell)\subset C_\ell^{(n-\ell)}  \text{ for  } \ell=1,\dots n.$$

     Two such   immersions, $ \iota_0,\iota_1:G_{n+1}\to P^*$  are called equivalent if $\iota_1=\iota_0\circ h$ for some diffeomorphism $h:G_{n+1}\to G_{n+1}$ which fixes the vertices.  Write $[\iota]$ for the equivalence class of $\iota$.

Then, 
\begin{equation}\label{stmps} \mu^n_\cL(x_n,x_{n-1},\dots, x_1)=\sum_{[\iota]} \iota(v_0)\in \FF\langle C_0^{(n)}\cap C_n\rangle=(C_0, C_n).\end{equation}

The resulting $A_\infty $ category is denoted $\cL$.  It has two objects, $L_0$ and $L_1$, and is equipped with the structure maps $\mu^n_\cL$ defined by Equation (\ref{stmps}). In this article we refer to $\cL$ as {\em the Fukaya category of the pillowcase,} in spite of the fact that we mean the  (full) Fukaya subcategory with objects $L_0$ and $L_1$.  We   endow $\cL$ with a $\ZZ$-grading, in the sense of \cite{Seidel2}, in    Section \ref{sec:gradings}.

The curve  $W_0$ (and similarly $W_1,W_\times$, or, more generally, any unobstructed immersed curve $W$ whose boundary points map to the corners of $P$)      gives rise to a  left    $A_\infty$-module $\cM_{W_0}$ over $\cL$.  For $C\in \{L_0,L_1\}$, one sets $\cM_{W_0}(C)$ equal to  the  $\FF$ vector space  spanned by the intersection points of   $W_0'$    with $C$, and structure maps obtained by counting immersed polygons,  whose first edge  lies on $W_0$. 
To streamline notation, when no chance of confusion is possible we will write
 $(W_0,C )$ rather than $\cM_{W_0}(C)$.  Hence for $C\in \{L_0,L_1\}$, taking closest points gives identifications
 $$\cM_{W_0}(C)=(W_0,C)=   \FF\langle W_0'\cap  C\rangle\cong \FF\langle W_0^{(n)}\cap  C^{(n-1)} \rangle\cong \FF\langle W_0^{(n)}\cap  C\rangle.$$

\noindent{\bf Remark.} A more unifying approach might be to work in the larger {\em wrapped} or {\em partially wrapped}  Fukaya category with objects $L_0,L_1, W_0$ and $W_1$ (see \cite{AAEKO}).  But for this article it suffices to consider the technically more elementary perspective of viewing $W_0$ and $W_1$ as giving modules over the Fukaya category with objects $L_0,L_1$. This avoids having to to wrap near the corners.

\medskip

   The generators of $(L_i,L_j)$ are illustrated and labelled in  Figure \ref{Generatorsfig}. In particular (writing $\langle x_i\rangle$ for the $\FF$ vector space spanned by $x_i$),
\begin{equation}\label{fukgens}\begin{split} (L_0,L_0)&=\langle a_0,b_0,c_0,d_0\rangle,\\ (L_1,L_1)&=\langle a_1,b_1,c_1,d_1\rangle,\\ (L_1,L_0)&=\langle p_{01},q_{01}\rangle,\text{ and }\\ 
(L_0,L_1)&=\langle p_{10},q_{10}\rangle, 
\end{split}\end{equation}
 
   \begin{figure} 
\begin{center}
\def\svgwidth{2.2in}
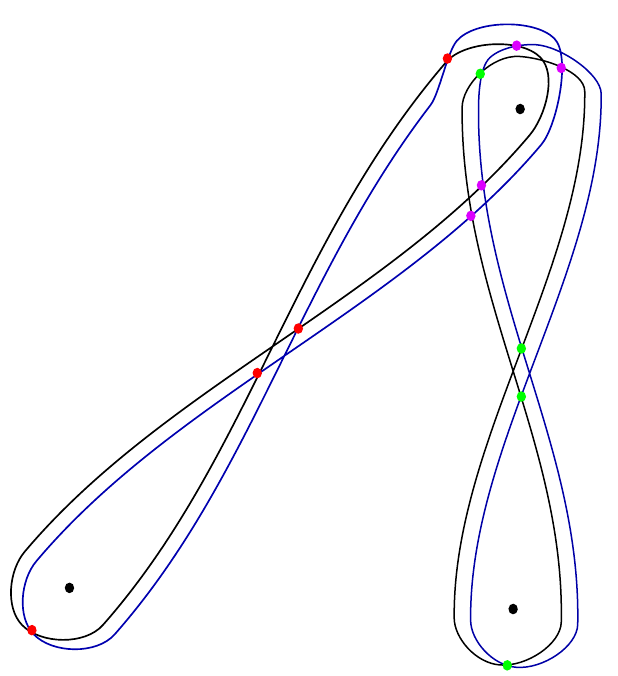
 \caption{\label{Generatorsfig} }
\end{center}
\end{figure} 
 \medskip

  The generators of $(W_i,L_j)$, $i,j\in \{0,1\}$,  are illustrated and labelled in  Figure \ref{rectanglestestfig}. In particular,  \begin{equation}\label{fukgens2}(W_0,L_0)=\langle \alpha, \beta\rangle, ~ (W_0,L_1)=\langle \gamma\rangle, ~(W_1,L_0)=\langle \tau \rangle, ~(W_1, L_1)=\langle \rho, \sigma \rangle.\end{equation}
   \begin{figure} 
\begin{center}
\def\svgwidth{2in}
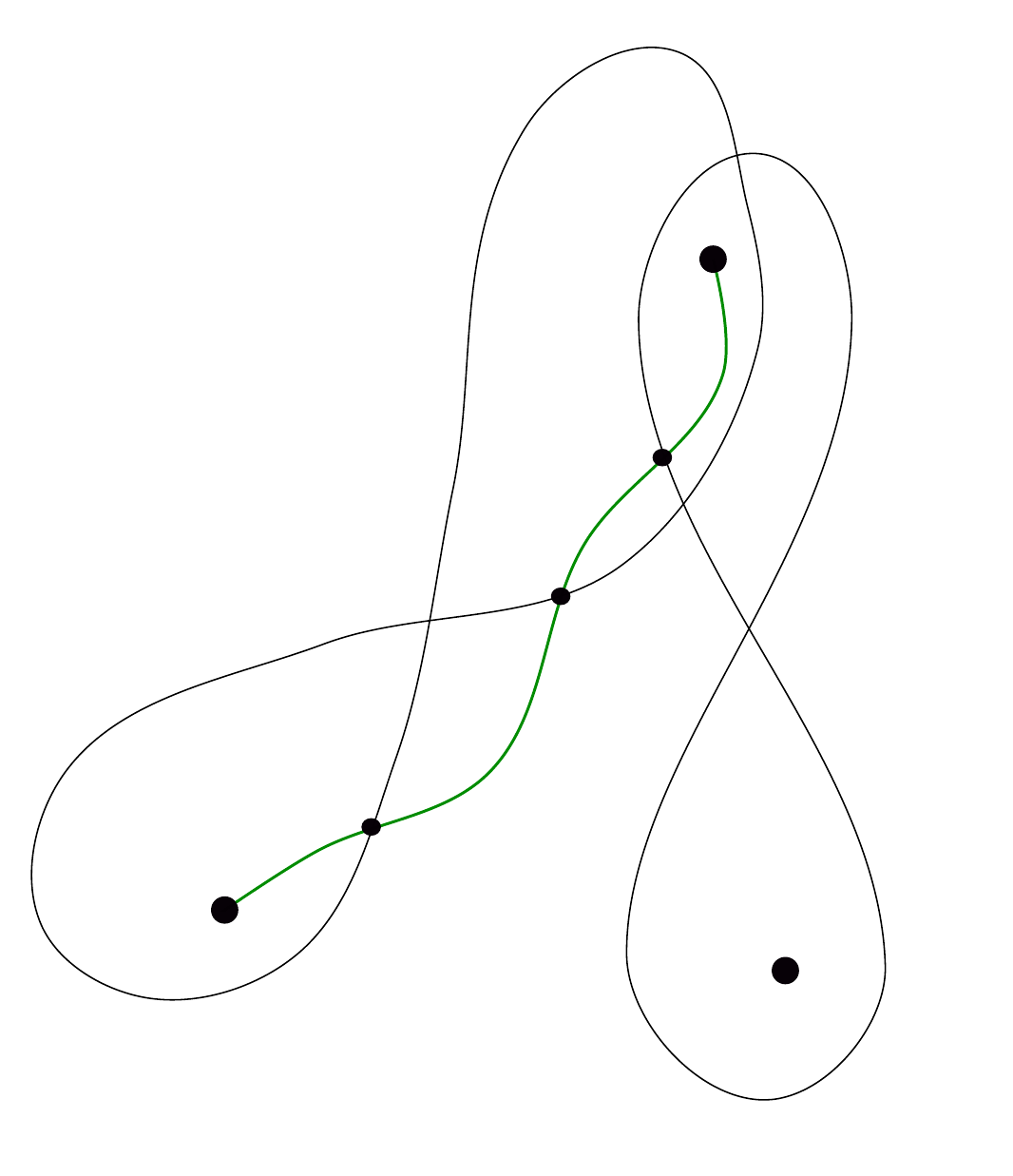
 \caption{\label{rectanglestestfig} }
\end{center}
\end{figure}

   We  turn now to the calculation of the structure maps.  
Our first calculation shows that the structure of the $A_\infty$-category    $\cL$ is relatively simple.

\begin{theorem}\label{maincalc}
The  $A_\infty$-category $\cL$ is associative and strictly unital, with identity elements $a_0\in (L_0,L_0)$ and $a_1\in (L_1,L_1)$, i.e.,  $\mu^2(a_i, x)=x$ and $\mu^2(y,a_i)=y$ whenever these products are defined.  The only other nonzero values of $\mu^2$ are: 
\begin{align*}
\mu^2(b_0,c_0)=\mu^2(c_0,b_0)=d_0,~\mu^2(b_1,c_1)=\mu^2(c_1,b_1)=d_1,\\
\mu^2(b_0,p_{01})=\mu^2(p_{01}, b_1 )=q_{01}, ~ ~\mu^2(q_{10}, b_0 )=\mu^2(b_1,q_{10})=p_{10},\\
\mu^2(p_{01},p_{10})=\mu^2(q_{01},q_{10})=d_0,~\mu^2(q_{10},q_{01})=\mu^2(p_{10},p_{01})=d_1,\\
\mu^2(p_{01},q_{10})=c_0,~\mu^2(q_{10},p_{01})=c_1.
\end{align*}

For $k=3$ we have twenty   four  non-zero products
\begin{align*}\mu^3(q_{10},b_0,p_{01})=a_1,~\mu^3(p_{01}, q_{10}, b_0)=a_0, \\
  \mu^3 (q_{01}, q_{10}, b_0)=\mu^3(p_{01},p_{10},b_0)=b_0 \\
~\mu^3(p_{10},p_{01}, b_1)= \mu^3(b_1,q_{10},q_{01})=b_1, \\
\mu^3(c_0,b_0,c_0)= \mu^3(c_0,q_{01},q_{10})= \mu^3(c_0,p_{01},p_{10})=c_0,\\
\mu^3(c_1,b_1,c_1)= \mu^3(c_1,p_{10},p_{01})=\mu^3(q_{10},q_{01}, c_1)=c_1,\\
\mu^3(d_0,c_0,b_0)=\mu^3(b_0,c_0,d_0)=\mu^3(q_{01},q_{10},d_0)= \mu^3(p_{01},p_{10},d_0)=d_0,\\
\mu^3(b_1,c_1,d_1)=\mu^3(d_1,c_1,b_1)=\mu^3(d_1,q_{10},q_{01})=\mu^3(p_{10}, p_{01}, d_1)=d_1,\\
~ \mu^3(p_{01},p_{10},q_{01})=q_{01},~ \mu^3(p_{10},p_{01},p_{10})=p_{10},\\
\mu^3(q_{10},q_{01},q_{10})=\mu^3(q_{10},p_{01},p_{10})=q_{10}.
\end{align*}
On all other  ordered triples of generators, $\mu^3$ vanishes.  Furthermore, $\mu^k$ is zero if $k\ne 2,3$.
\end{theorem}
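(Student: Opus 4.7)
The plan is to exploit Theorem \ref{lookslike}, which provides a concrete pictorial model of the pairwise-perturbed Hamiltonian pushoffs $L_i, L_i', L_i'', \ldots$ in the thrice-punctured planar picture of $P^*$ (Figure \ref{Heraldlemmafig}, extended to as many parallel copies as required). Under the closest-point identifications, the generators listed in Equation (\ref{fukgens}) correspond to explicit intersection points in this picture. Every computation of $\mu^k$ then reduces to enumerating equivalence classes of orientation-preserving immersions $\iota : G_{k+1} \to P^*$ with prescribed corner and edge data. Because the pushoffs are $C^0$-small, any such immersion is supported in a tiny neighborhood of $L_0 \cup L_1$, which makes the enumeration purely combinatorial.

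I would begin with $\mu^1$ and strict unitality. The pairwise position of $C$ and $C'$ for $C \in \{L_0, L_1\}$ admits no immersed bigon with two distinct corners, because the figure-8 geometry places any two intersection points of $C' \cap C$ on opposite sides of a vertex of the figure-8; hence $\mu^1 = 0$. For the proposed units $a_i$, exactly one small triangle with a corner at $a_i$ exists for each adjacent input generator, producing $\mu^2(a_i, x) = x$ and $\mu^2(y, a_i) = y$; and the local pinch at $a_i$ obstructs its appearance as a corner of any $(k+1)$-gon with $k \geq 2$.

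For $\mu^2$, the goal is to enumerate all immersed triangles whose three corners are generators. Two triangles on each figure-8 crossing of $L_i$ yield $\mu^2(b_i, c_i) = \mu^2(c_i, b_i) = d_i$; triangles straddling the two figure-8 curves yield the mixed products involving $p_{ij}, q_{ij}$, including ``monodromy-type'' products such as $\mu^2(p_{01}, p_{10}) = d_0$ and $\mu^2(p_{01}, q_{10}) = c_0$. This is a finite case check; the main work is verifying that no further triangle exists, which follows from the explicit figure-8 layout.

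For $\mu^3$, I would enumerate immersed quadrilaterals with four successive edges on pushoffs $C_0''', C_1'', C_2', C_3$. This is the most delicate case: many of the twenty-four nontrivial products come from quadrilaterals that wrap partially across the double points of the figure-8 curves or cover an annular region once. I would organize the count by the output generator and by which lobes of each figure-8 the quadrilateral occupies, taking care to track orientations so as to avoid double-counting. Finally, for $\mu^k$ with $k \geq 4$, I expect to argue that any hypothetical $(k+1)$-gon on $k+1$ nearby pushoffs would force either a non-convex corner or a degenerate bigon-like pinch, so no such immersion exists; this vanishing, referenced in Theorem \ref{testthm} and carried out in Appendix \ref{APa}, is conceptually the cleanest part but combinatorially requires exhausting the local models. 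The principal obstacle throughout is combinatorial completeness: ensuring the polygon lists are exhaustive, which is why the computation is organized around the fixed pictorial model.
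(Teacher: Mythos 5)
Your overall strategy matches the paper's: both reduce every $\mu^k$ computation to enumerating immersed convex polygons in the explicit flattened-pillowcase picture supplied by Theorem \ref{lookslike}, and the casework for $\mu^2$ and $\mu^3$ proceeds as you describe. Two points, however, misstate what actually happens.

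First, your argument that $\mu^1=0$ because the figure-8 geometry ``admits no immersed bigon with two distinct corners'' is not what the picture shows. In the paper's Appendix \ref{APa}, the opening figure records $\mu^1(a_0)=2d_0=0$: there \emph{are} immersed bigons with input $a_0$ and output $d_0$, but there are exactly two of them, and over $\FF_2$ they cancel. The vanishing of $\mu^1$ is a parity statement, not an absence statement. Any careful reader checking your claim against Figure \ref{Generatorsfig} would find bigons and conclude your argument is broken; you need the mod-$2$ count.

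Second, your proposed mechanism for $\mu^k=0$ when $k\geq 4$ — that a $(k+1)$-gon on $k+1$ nearby pushoffs ``would force either a non-convex corner or a degenerate bigon-like pinch'' — is not the obstruction the paper uses, and as stated it does not hold water. Convexity of the image corners is not an issue: at a transverse crossing one can always choose a quadrant with angle less than $\pi$. The paper's actual argument rests on Proposition \ref{monocle}, a structural constraint on immersed disks whose boundary is in general position: any negative monogon must contain a positive monogon in the disk it bounds, which in turn follows from the boundary Gauss map having degree $1$. Combined with the observation (Figure \ref{griddyfig}) that near a crossing the successive pushoffs form a grid into which at most two consecutive corners of a candidate polygon can map (three or more consecutive corners in the grid would violate Proposition \ref{monocle}), and the local model near $d_0$ (Figure \ref{twistyfig}) that forces any polygon entering that neighborhood to output $d_0$, one bounds the combinatorial possibilities and checks that no polygon with more than four sides survives. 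Without a precise tool like Proposition \ref{monocle}, the claim that higher $\mu^k$ vanish remains a plausibility argument, which is precisely the hard part of the theorem. You acknowledge the difficulty (``combinatorially requires exhausting the local models''), but exhausting local models is not by itself finite without the monogon constraint to prune the search.
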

 
 \medskip

We now shift our attention to  the $A_\infty$-modules $\cM_{W_0}$ and $\cM_{W_1}$.  Recall that $\cM_{W_i}(L_j)=(W_i,L_j)$,  and these vector spaces have bases identified in Figure \ref{rectanglestestfig}.   The structure maps for these modules,
\begin{equation}\label{test}
\mu^n:
(L_{\ell_{n-1}}, L_{\ell_{n}})\otimes (L_{\ell_{n-1}}, L_{\ell_{n-2}})
 \otimes\dots\otimes(L_{\ell_1}, L_{\ell_2})\otimes \cM_{W_i}(L_{\ell_1})\to \cM_{W_i}(L_{\ell_n}).
\end{equation}
are described in Theorem \ref{testthm}.

\begin{theorem} \label{testthm}  The only non-zero $\mu^n$  in Equation (\ref{test}) are given by
\begin{align*}
 \mu^2(a_0,\alpha)=\alpha,  ~\mu^2(a_0,\beta)=\beta,~\mu^2(c_0,\alpha)=\beta,~\mu^2(a_1,\gamma)=&\gamma,\\
  \mu^2(q_{10},\alpha)=\gamma, \mu^2(p_{01},\gamma)=&\beta  \\   
 \mu^2(a_1,\rho)=\rho,  ~\mu^2(a_1,\sigma)=\sigma,~\mu^2(c_1,\rho)=\sigma,~\mu^2(a_0,\tau)=&\tau,\\
   \mu^2(q_{10},\tau)=\sigma, \mu^2(p_{01},\rho) =&\tau, \\  
\mu^3(b_0,p_{01},\gamma)=\alpha, ~~\mu^3(q_{01},q_{10},\alpha)=\alpha, \mu^3(c_0,b_0,\beta)=\beta , ~ \mu^3(p_{01},p_{10},\alpha)=&\alpha,\\
 \mu^3(b_1,q_{10},\tau)=\rho, 
  ~\mu^3(q_{10},q_{01},\sigma)=\sigma,  
 \mu^3(c_1,b_1,\sigma)=\sigma , ~ 
   \mu^3(p_{10},p_{01},\rho)=&\rho, \\     \mu^3(p_{01},p_{10},\tau)=&\tau. \\
\end{align*}

\end{theorem}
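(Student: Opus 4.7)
The plan is to carry out essentially the same polygon-counting analysis that proves Theorem \ref{maincalc}, adapted to the module setting where the first edge of each immersed polygon is required to lie on (a Hamiltonian pushoff of) $W_i$ rather than on an $L$-curve. Concretely, after choosing successive Hamiltonian pushoffs $W_i^{(n)}$, $L_j^{(k)}$ as guaranteed by Theorem \ref{lookslike} (extended to include the noncompact arcs $W_0,W_1$), an element of $\mu^n(x_n,\ldots,x_1,w)$ is recorded by an equivalence class of orientation-preserving immersions of an $(n+2)$-gon into $P^*$ whose edges, read counterclockwise, lie on $W_i^{(n)},\,L_{\ell_1}^{(n-1)},\ldots,L_{\ell_n}$, and whose vertices match the specified intersection points. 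The output $\mu^n(x_n,\ldots,x_1,w)\in(W_i,L_{\ell_n})$ is the sum of all such polygons' final corners.

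First I would exploit the evident symmetry of the pillowcase which interchanges the roles of $L_0\leftrightarrow L_1$ and $W_0\leftrightarrow W_1$: under this symmetry $\alpha\leftrightarrow\rho$, $\beta\leftrightarrow\sigma$, $\gamma\leftrightarrow\tau$ and the generators of $(L_i,L_j)$ map as in their own swap. This cuts the work roughly in half, reducing the statement to the $W_0$-module and a handful of mixed $W_1$ verifications. Next, working from Figure \ref{rectanglestestfig} together with the extended version of Figure \ref{Heraldlemmafig} including $W_0$ and $W_1$, I would enumerate all bigons and triangles with a $W_0$-edge to recover the stated $\mu^2$ values: each listed product should manifest as a single small immersed triangle or bigon visible in the local picture, and the absence of other products should be clear from the same picture. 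The $\mu^3$ identifications are obtained analogously by counting immersed quadrilaterals with a $W_0$-edge; here I would sort cases by the target $L_{\ell_3}$ so that the enumeration matches the corresponding block in the proof of Theorem \ref{maincalc}.

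The main obstacle, as for Theorem \ref{maincalc}, will be showing that $\mu^n\equiv 0$ for $n\ge 4$. For this I would follow the strategy of Appendix \ref{APa}: after identifying $P^*$ with a thrice-punctured plane as in Figure \ref{Heraldlemmafig}, any immersion of a large $(n+2)$-gon whose first edge lies on $W_0$ or $W_1$ must, by convexity of corners and the combinatorics of how the $L_j^{(k)}$ wrap around the punctures, either (i) force an interior angle to become non-convex, (ii) wind across a puncture, or (iii) force a degeneration that already appeared as a lower-order contribution. Because $W_0$ and $W_1$ are properly embedded arcs meeting $P^*$ cleanly and each intersects each $L_j$ in only one or two points near the relevant pushoff region, the combinatorial bookkeeping is actually less involved than in the $\cL$-only case: there are very few choices for the first vertex, and the remaining $n$ vertices must lie in the same narrow strips analyzed in Appendix \ref{APa}.

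A consistency check at the end is that the list of $\mu^2$ and $\mu^3$ values satisfies the $A_\infty$-module relations when combined with Theorem \ref{maincalc}; verifying a few representative relations (e.g.\ those involving $\mu^3(p_{01},p_{10},\alpha)$ or $\mu^3(b_0,p_{01},\gamma)$) provides a useful algebraic cross-check against the geometric enumeration, and any discrepancy would localize to a missed or over-counted polygon. I expect the bulk of the write-up to be the vanishing argument for $n\ge 4$, and the subtlest bookkeeping to be the interaction of $\mu^3$ inputs that mix $p,q$ generators with the $W_i$-corners near the singular point at infinity, where the closest-point identifications between $W_0^{(n)}\cap L_j$ and $W_0\cap L_j$ must be tracked carefully.
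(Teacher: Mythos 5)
Your overall plan is the same as the paper's: Appendix \ref{APa} handles Theorems \ref{maincalc} and \ref{testthm} together by direct enumeration of immersed convex polygons (using the pushoff configuration of Theorem \ref{lookslike} and Figure \ref{Heraldlemmafig}, which already includes $W_0'''$ and $W_1'''$), and then kills $\mu^n$ for $n\ge 4$ via the monogon argument (Proposition \ref{monocle}) and the grid/neighborhood analysis around double points; the figures in the appendix even display the polygons for $\mu^3(q_{01},q_{10},\alpha)$, $\mu^3(c_0,b_0,\beta)$, and $\mu^3(b_0,p_{01},\gamma)$. Your $A_\infty$-module consistency check at the end is a sensible addition, though the paper does not carry it out explicitly.

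The one place where your proposal departs from the paper — the claimed symmetry interchanging $L_0\leftrightarrow L_1$ and $W_0\leftrightarrow W_1$ — is where a genuine gap appears. The pillowcase symmetry group contains an $S_3$ that permutes $\{W_0,W_1,W_\times\}$ (and correspondingly $\{L_0,L_1,L_\times\}$); the $3$-cycle is the orientation-preserving map used in the proof of Theorem \ref{fukayadudes2}, but every transposition in this $S_3$, in particular the one swapping $W_0\leftrightarrow W_1$ and fixing $W_\times$, is orientation-reversing, hence anti-symplectic. An anti-symplectomorphism reverses the boundary orientation of an immersed polygon, so it reverses the cyclic order of inputs to $\mu^n$ and swaps the distinguished $W$-edge from first to last position: it relates the left $\cM_{W_0}$-module data to \emph{right}-module (or contravariant) data for $W_1$, not to the left $\cM_{W_1}$-module data you want. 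Consistent with this, the stated list in Theorem \ref{testthm} is visibly not invariant under the naive correspondence $\alpha\leftrightarrow\rho$, $\beta\leftrightarrow\sigma$, $\gamma\leftrightarrow\tau$, $p_{01}\leftrightarrow q_{10}$, $q_{01}\leftrightarrow p_{10}$: there are four non-zero $\mu^3$'s landing in the $W_0$-module and five landing in the $W_1$-module, and the $W_1$ entry $\mu^3(p_{01},p_{10},\tau)=\tau$ has no listed $W_0$-partner $\mu^3(q_{10},q_{01},\gamma)=\gamma$. So you cannot simply cut the work in half; you would either need to carry out the $W_1$ enumeration independently, as the paper does, or carefully formulate and justify a contravariant duality and check that the pushoff/ordering conventions on both sides are compatible with it. As written, the symmetry step would lead you to a different (and internally inconsistent) answer.
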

The process of verifying Theorems \ref{maincalc} and \ref{testthm} is described in Appendix \ref{APa}.

\section{Gradings}\label{sec:gradings}

We outline, in a simple form sufficient for our purposes, the notion of gradings (\cite{Seidel2, kontsevich}) 
 appropriate for the  smooth locus $P^*$ of the  pillowcase.  
 \medskip
 
 Let $\PP(TP^*)\to P^*$ denote the bundle whose fiber over $x\in P^*$ is the space of lines in $T_xP^*$. The tangent line to an immersed curve $\iota:C\to P^*$ defines a  lift 
$d\iota:C\to \PP(TP^*)$.
 
 The tangent bundle of $P^*$ is trivial, as $P^*$ is a 4-punctured 2-sphere.   Thus $\PP(TP^*)\to P^*$ admits  sections, called {\em line fields on $P^*$}.    A line field $\lambda$ on $P^*$ defines a trivialization 
 $$\PP(TP^*)\cong P^*\times\RR P^1$$
which takes a line in $T_pP^*$ to its angle with $\lambda_p$,  where the angle is taken in $\RR P^1=[0,\pi]/_{0\sim \pi}$.   
Denote the composite
 $$\tau_\lambda: \PP(TP^*)\cong P^*\times\RR P^1\xrightarrow{\pi_2}\RR P^1.$$
 
 Given a non-negative integer $N$, 
Let 
$$\rho: {(\RR P ^1) _N}\to \RR P^1$$
denote the connected  $ \ZZ/N $ cover.

  An immersed curve $\iota:C\to P^*$ is called {\em $(\ZZ/N)$-gradable with respect to $\lambda$} if the composite of its tangent map  $d\iota :C\to \PP(TP^*)$ and $\tau_\lambda$ lifts to the $\ZZ/N$  cover:
\[\begin{diagram}
\node[2]{(\RR P ^1)_N}\arrow{s,r}{\rho}\\
\node{C} \arrow{e,b}{\tau_\lambda\circ d\iota}\arrow{ne,l,..}{\tilde \iota}\node{\RR P^1}
\end{diagram}\]  
 A {\em $(\ZZ/N)$-grading of 
$\iota:C\to P^*$} is a choice $\tilde \iota:C\to (\RR P ^1) _N$ of such a lift.  
 
If   $C$ is an interval, then $ \iota:C\to P^* $ is obviously gradable. If $C$ is a circle, then $ \iota:C\to P^* $ is gradable if and only if the degree of the composite $\tau_\lambda\circ d\iota$ is zero mod $N$. (This degree is  known as the {\em Maslov index of the curve $\iota$ with respect to $\lambda$}.)

\medskip

 If $\iota_j:C_j\to P^*,~j=1,2$   are transverse immersed curves equipped with gradings $\tilde \iota_j:C_j\to (\RR P ^1)_N$, then to every generator of $(C_1,C_2)$, that is,  to an intersection point $x$ of $\iota_1$ and $\iota_2$,  one can assign the 
grading in $\ZZ/N$, denoted by $p_\cL(x)$, as follows.

Suppose that $\iota_1(a)=x=\iota_2(b)$.  Choose a path $\alpha:I\to (\RR P ^1)_N$ from $\tilde\iota_2 (b)$ to $ \tilde\iota_1 (a)$.  We define the grading of $x$ to be the mod N oriented intersection number 
 of the path  $\rho\circ \alpha$ from $\rho(\tilde\iota_2(b))$ to $\rho(\tilde\iota_1(a))$ in $ \PP(TP^*)_x$ with a small positive rotation of  $\rho(\tilde\iota_2(b))$, 
 $$p_\cL(x) = (\rho\circ \alpha) \cdot e^{\epsilon \bbi} \rho(\tilde\iota_2(b))  $$  for any sufficiently small $\epsilon>0$.  If we wish to emphasize the dependence on the line field $\lambda$, we will write $p_\cL^\lambda$ instead of $p_\cL$.

\medskip

  Let $\lambda_0$ be the line field on $P^*$ whose preimage under the branched cover $\RR^2\to P$ is the horizontal line field.  If $\iota:S^1\to P^*$ is a small embedded loop encircling one of the four corners counterclockwise, the Maslov index of  $\iota$ with respect to $\lambda_0$ equals 1.

 If $\lambda$ is any other line field, the difference in Maslov indices well-defines a cohomology class  $\delta_{(\lambda,\lambda_0)}\in H^1(P^*; \ZZ)$.
Denote the homology class of the counterclockwise loop around the point $[0,0]$ in $P^*$ by $\xi_1$, and similarly $\xi_2$ for the point $[\pi, 0]$, $\xi_3$ for $[\pi,\pi],$ and $\xi_4$ for $[0,\pi].$
 
Since $\delta_{(\lambda,\lambda_0)}(\xi_1+\xi_2+\xi_3+\xi_4)=0$,  the sum of the Maslov indices of counterclockwise loops around all four corners with respect to $\lambda$ is again equal to 4.
 
 In particular,  the immersed circles $L_0$ and $L_1$ are $(\ZZ/N)$-gradable with respect to  $\lambda$ if and only if  $\delta_{(\lambda,\lambda_0)}(\xi_1-\xi_3)=0$ and $\delta_{(\lambda,\lambda_0)}(\xi_2-\xi_3)=0$, so that
 \begin{equation}
\label{gradability}
\delta_{(\lambda,\lambda_0)}(\xi_1)=\delta_{(\lambda,\lambda_0)}(\xi_2)=\delta_{(\lambda,\lambda_0)}(\xi_3)\mod N.\end{equation}
This implies that $\delta_{(\lambda,\lambda_0)}(\xi_4)=-3\delta_{(\lambda,\lambda_0)}(\xi_1) \mod{N}.$

 \medskip

If $x\in C_0\cap C_1$ for some curves $C_0,C_1$, then $x$ can be viewed as a generator of both $(C_0,C_1)$ and $(C_1,C_0)$, and $x$ may have different gradings in these two choices. 
In the following Lemmas, $a_i, b_i, c_i, d_i, p_{ij}, q_{ij},\alpha, \beta,\gamma,\tau,\rho,\sigma$  refer to the generators of the different morphism spaces as in Equations (\ref{fukgens}) and (\ref{fukgens2}).  For example,    we consider $\alpha$ as a generator of $ (W_0,L_0),$   {\em not} $(L_0,W_0)$.

 \begin{lemma}\label{pinning_delta} Fix any non-negative integer $N$.  Fix a line field $\lambda$ on $P^*$ such that Equation {\rm (\ref{gradability})} holds.   Define $\delta=\delta_{(\lambda,\lambda_0)}(\xi_3)$.
 \begin{enumerate} 
 \item The Lagrangians $L_0, L_1, W_0,$ and $W_1$ are gradable with respect to $\lambda$.
   \item  Consider any gradings of $L_0, L_1, W_0$ and $W_1$.  Then  $$p^\lambda_\cL (a_i)=0\text{ and }   p^\lambda_\cL 
   (d_i) = 1.$$  In addition,  the gradings satisfy
$$p_\cL^\lambda(\alpha)-p_\cL^\lambda(\beta)=1+\delta=p_\cL^\lambda(\rho)-p_\cL^\lambda(\sigma).
$$
\end{enumerate} \end{lemma}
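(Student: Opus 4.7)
The plan is to reduce each statement to a Maslov-index calculation driven by the local geometry near the intersection points, together with global input coming from hypothesis (\ref{gradability}). For part (1), the arcs $W_0$ and $W_1$ are maps from intervals, so the lifting problem in the definition of $(\ZZ/N)$-gradability is unobstructed. For the immersed circles $L_0, L_1$ I first compute the Maslov index with respect to $\lambda_0$: each $L_i$ is a figure 8 in $\RR^2$ (modulo the deck group) whose two lobes contribute opposite rotation numbers, so the total is zero. The change in Maslov index when replacing $\lambda_0$ by $\lambda$ is $\delta_{(\lambda,\lambda_0)}$ evaluated on the homology class $[L_i]\in H_1(P^*;\ZZ)$. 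Reading $[L_i]$ off of Figure \ref{exacttriplesfig}, each $L_i$ is homologous to a difference $\xi_j-\xi_k$ of counterclockwise puncture loops (since the two lobes of the figure 8 wind in opposite senses around distinct corners), and (\ref{gradability}) then forces $\delta_{(\lambda,\lambda_0)}([L_i])\equiv 0\pmod N$, establishing gradability.

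For the absolute-grading assertions on $a_i$ and $d_i$ I observe that these are intersection points of $L_i$ with its own Hamiltonian pushoff $L_i'$; since both strands come from the same curve, any ambiguity in the choice of lift $\tilde L_i$ cancels, and the grading depends only on the local pair of tangent lines. Near $a_i$ the two strands are nearly parallel (angle $O(\delta)$), so the minimal path from $\tilde{L_i'}(b)$ to $\tilde{L_i}(a)$, projected to $\RR P^1$, does not cross the small positive rotation of $\tilde{L_i'}(b)$, yielding $p^\lambda_\cL(a_i)=0$. Near $d_i$ the two relevant tangent lines are the two transverse branches of the self-intersection of the figure 8 (displaced by the pushoff), and the analogous intersection count with a small positive rotation gives exactly $1$, so $p^\lambda_\cL(d_i)=1$. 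Neither count sees the line field globally, so the value is independent of the chosen lifts.

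To obtain $p^\lambda_\cL(\alpha)-p^\lambda_\cL(\beta)=1+\delta$, I run a loop computation. Since $\alpha$ and $\beta$ both lie in $W_0\cap L_0$, the concatenation of an arc on $W_0$ from $\beta$ to $\alpha$ with an arc on $L_0$ from $\alpha$ to $\beta$ is a closed loop $\Gamma$ in $P^*$; from Figure \ref{rectanglestestfig} one sees that $\Gamma$ bounds an immersed bigon in $P^*$ enclosing exactly one corner, homologous to $\xi_3$. The standard relation expressing the difference of gradings at the two endpoints of such a bigon is: (Maslov index of $\Gamma$ with respect to $\lambda$) plus a contribution from the two convex corners of the bigon. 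The Maslov index of a small counterclockwise loop around a single corner equals $1+\delta_{(\lambda,\lambda_0)}(\xi_3)=1+\delta$, and combining this with the explicit corner contributions gives the desired $1+\delta$. The same argument applied to the symmetric configuration of $W_1,L_1$ in Figure \ref{rectanglestestfig} produces $p^\lambda_\cL(\rho)-p^\lambda_\cL(\sigma)=1+\delta$.

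The main obstacle is combinatorial bookkeeping: correctly identifying which corner is enclosed by each of the two relevant bigons (and verifying that they both contribute $\delta=\delta_{(\lambda,\lambda_0)}(\xi_3)$ modulo $N$, as opposed to, say, $\delta_{(\lambda,\lambda_0)}(\xi_4)=-3\delta$), and fixing the orientation conventions in the local computation of $p^\lambda_\cL(a_i)$ and $p^\lambda_\cL(d_i)$ so that the small-rotation intersection count comes out to $0$ and $1$ respectively rather than $-1$ and $0$. Once Figures \ref{exacttriplesfig} and \ref{rectanglestestfig} are committed to and the convention for $p^\lambda_\cL$ is pinned down, the remaining steps are routine.
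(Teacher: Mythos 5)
Your handling of part (1) and of the relative gradings $p_\cL^\lambda(\alpha)-p_\cL^\lambda(\beta)=1+\delta$ is in the same spirit as the paper's proof, which simply asserts these "can be calculated directly"; your bigon-enclosing-one-corner picture is a reasonable way to make that explicit. The gap is in the absolute gradings of $a_i$ and $d_i$. You argue that because these are intersections of $L_i$ with its own pushoff $L_i'$, "the grading depends only on the local pair of tangent lines." But $b_i$ and $c_i$ are also intersections of $L_i$ with $L_i'$, and Proposition \ref{prop:gradings} assigns them gradings $3$ and $-2$, which visibly depend on $\delta=\delta_{(\lambda,\lambda_0)}(\xi_3)$ and hence on $\lambda$. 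So "coming from the same curve" does not make the computation local. The lift-cancellation you invoke only shows that the answer is independent of the chosen grading of $L_i$, not that it is locally determined: the grading is an intersection number along a path in $(\RR P^1)_N$ joining the lift values at the two preimages, and for the double-point intersections those preimages lie on different branches of the figure-8, so their lift values differ by the winding of the tangent line along a lobe of $L_i$ (which is where $\lambda$ enters). What distinguishes $a_i$ and $d_i$ is that their preimages on $L_i$ and $L_i'$ correspond under the small Hamiltonian isotopy, i.e. they are the Morse-type cancelling pair, so the lift values really are close. Your description of $d_i$ as lying on "the two transverse branches of the self-intersection" actually suggests a double-point intersection like $b_i,c_i$, for which the local computation would give the wrong answer.

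The paper sidesteps all of this with a cleaner, convention-free argument: $p_\cL^\lambda(a_i)=0$ follows from $\mu^2(a_i,a_i)=a_i$ (Theorem \ref{maincalc}) together with $\deg\mu^2=0$, and then $p_\cL^\lambda(d_i)=1$ follows from the existence of a bigon from $a_i$ to $d_i$ and $\deg\mu^1=1$, which is exactly the invariant certificate that $(a_i,d_i)$ forms a cancelling pair. If you want to keep your geometric route, you should replace the "same curve" justification with the observation that $a_i,d_i$ are the Morse/Hamiltonian pair while $b_i,c_i$ are the double-point pair, and correct the description of the tangent lines near $d_i$.
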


\begin{proof} The arcs $W_0$ and $ W_1$ are gradable with respect to any line field. The sentence preceding Equation (\ref{gradability})  shows that $L_0,L_1$ are gradable.

 The calculation $\mu^2(a_i,a_i)=a_i$, together with the fact that $\mu^k$ has degree $2-k$, implies that $p_\cL^\lambda(a_i)=0$. The existence of a bigon from $a_i$ to $d_i$ implies that 
$p_\cL^\lambda(d_0)=1=p_\cL^\lambda(d_1)$.

The following relative gradings    can be calculated directly: \begin{equation*}
p_\cL^\lambda(\beta)=p_\cL^\lambda(\alpha) -1-\delta\text{ and } 
p_\cL^\lambda(\sigma)=p_\cL^\lambda(\rho) -1-\delta.
\end{equation*} 
\end{proof}

 Rather than to postpone the assignment of a specific choice  of line field $\lambda$ and gradings of $L_0, L_1, W_0$ and $W_1$ until required in the proof of our main result, we  choose $\lambda$ and $N$ presently, in order to prevent notation from becoming unwieldy. 

Specifically, {\em we henceforth take $N=0$, that is, we work  with $\ZZ$-gradings,  and, we choose $\delta=1$.} This determines a unique up to isotopy line field $\lambda$ satisfying\footnote{The line field $\lambda$ coincides with the line field called $\lambda_{inst}$ in \cite{HHK2}, which arises as a consequence of the  $\ZZ/4$-grading on Kronheimer-Mrowka's singular instanton homology \cite{KM1} and a splitting theorem for spectral flow.}   \begin{equation}
\label{defoflam}
\delta_{(\lambda,\lambda_0)}(\xi_i)=1\text{ for }i=1,2,3\text{ and }\delta_{(\lambda,\lambda_0)}(\xi_4)=-3.
\end{equation}
These choices are motivated as follows.  In Section \ref{TwistedSister} we construct a twisted complex over $\cL$ from any tangle diagram which, when paired with  $W_0$ or $W_1$, gives a chain complex, which Theorem \ref{invariance2} identifies with the reduced Khovanov complex of the link obtained by closing the tangle with one of the two trivial tangles.  In the special case of the 2-tangle diagram $T_0$, the twisted complex constructed is simply $L_0$, and the Khovanov complex  corresponding to the planar closure with two components is $(W_0,L_0)=\langle \alpha, \beta\rangle$, with zero differential.
Because the reduced Khovanov cohomology for the trivial 2-component link has two generators, one in quantum grading 0 and one in grading $-2$, 
we require $p_\cL^\lambda(\alpha)-p_\cL^\lambda(\beta)=\pm2$, and  therefore $\delta$ must equal $1$ or $-3$ by Lemma \ref{pinning_delta}.  These two choices give isomorphic graded $A_\infty$-categories (the isomorphism corresponding to moving the earring). Hence we will simply choose $\delta =1$.  

Similar considerations, corresponding to   unknot   closures of $T_ 0$ and $T_1$, and the fact that the reduced Khovanov cohomology of the unknot has rank one, concentrated  in grading $-1$,    requires   $p_\cL ^\lambda (\gamma) =p_\cL ^\lambda (\tau)=-1$. Such considerations lead us to take specific choices of gradings on $L_1,W_0,$ and $W_1$, and are made precise in 
the following lemma.

  \begin{proposition}\label{prop:gradings} Let $\lambda$ be the line field on $P^*$ satisfying Equation {\rm (\ref{defoflam})}.
  For any $\lambda $ grading of $L_0$, there are uniquely determined $\lambda$ gradings of $ L_1, W_0$ and $W_1$ such that  
   $$p_\cL ^\lambda (\gamma) =p_\cL ^\lambda (\tau)=-1,\text{ and }p_\cL ^\lambda (\alpha) =  0   .$$  
 
 For these gradings of $ L_1, W_0$, and $W_1$, the gradings of the other generators are given by 
\begin{align*} 
p_\cL^\lambda(a_i)&=0,  ~~p^\lambda_\cL (b_i) =3, ~~p^\lambda_\cL (c_i) = -2,~~ p_\cL^\lambda(d_i)=1,\\
~~p^\lambda_\cL(p_{01}) &=  - 1, ~~
p^\lambda_\cL (q_{01}) = 2, ~~
p^\lambda_\cL (p_{10} )= 2,  ~~p^\lambda_\cL (q_{10} ) = -1,\\
p_\cL ^\lambda (\beta) &= -2, ~~p_\cL^\lambda(\sigma) = -2,~~
p_\cL^\lambda(\rho)=  0.
\end{align*} 
\end{proposition}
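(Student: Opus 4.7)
The argument has two parts. First, I establish that the three normalizations $p_\cL^\lambda(\alpha)=0$, $p_\cL^\lambda(\gamma)=-1$, $p_\cL^\lambda(\tau)=-1$ uniquely determine gradings of $W_0$, $L_1$, $W_1$ once a grading of $L_0$ is fixed. Second, I read off the gradings of all remaining generators from the known $A_\infty$-operations of Theorems \ref{maincalc} and \ref{testthm}, exploiting the degree formula
\[
\bigl|\mu^k(x_k,\dots,x_1)\bigr| \;=\; |x_1|+\cdots+|x_k|+(2-k).
\]

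For the uniqueness step, invoke Lemma \ref{pinning_delta}(1) to know $L_1$, $W_0$, $W_1$ are all gradable with respect to $\lambda$. Since we are working with $\ZZ$-gradings (so the cover $(\RR P^1)_0$ is the universal cover $\RR$), any two gradings of a fixed immersed curve differ by a global integer translation, and such a translation on $C$ shifts $p_\cL^\lambda(x)$ by a constant (depending on orientation conventions) for every intersection point $x$ lying on $C$. Thus, with the grading on $L_0$ fixed, the single condition $p_\cL^\lambda(\alpha)=0$ (where $\alpha\in W_0'\cap L_0$) pins down the grading of $W_0$. With $W_0$ now fixed, the condition $p_\cL^\lambda(\gamma)=-1$ (where $\gamma\in W_0'\cap L_1$) pins down $L_1$, and symmetrically $p_\cL^\lambda(\tau)=-1$ (where $\tau\in W_1'\cap L_0$) pins down $W_1$.

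For the tabulation step, Lemma \ref{pinning_delta}(2) already gives $p(a_i)=0$, $p(d_i)=1$, and with our $\delta=1$ normalization the relations $p(\beta)=p(\alpha)-2=-2$ and $p(\sigma)=p(\rho)-2$. I then propagate degrees through the non-vanishing products. From Theorem \ref{testthm}, $\mu^2(q_{10},\alpha)=\gamma$ forces $p(q_{10})=-1$, and $\mu^2(p_{01},\gamma)=\beta$ forces $p(p_{01})=-1$. Applying Theorem \ref{maincalc}: $\mu^2(p_{10},p_{01})=d_1$ and $\mu^2(q_{01},q_{10})=d_0$ then yield $p(p_{10})=p(q_{01})=2$; $\mu^2(p_{01},q_{10})=c_0$ and $\mu^2(q_{10},p_{01})=c_1$ yield $p(c_i)=-2$; $\mu^2(b_i,c_i)=d_i$ yields $p(b_i)=3$. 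Finally, $\mu^2(p_{01},\rho)=\tau$ from Theorem \ref{testthm} forces $p(\rho)=0$, hence $p(\sigma)=-2$.

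The main potential obstacle is not the derivation itself, which is a short chain of degree arithmetic, but \emph{consistency}: Theorems \ref{maincalc} and \ref{testthm} assert many non-zero products, each imposing its own degree equation, so the system is heavily overdetermined. I would verify compatibility by spot-checking the relations that were not used in the propagation---for instance $\mu^3(q_{10},b_0,p_{01})=a_1$ requires $(-1)+3+(-1)+(2-3)=0=p(a_1)$, $\mu^3(c_0,b_0,c_0)=c_0$ requires $(-2)+3+(-2)-1=-2=p(c_0)$, and the remaining cubic and quadratic relations are checked by identical arithmetic---and then observe that any residual consistency is automatic from the $A_\infty$-relations, since these determine $\mu^n$ values only up to degree-preserving combinations. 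This consistency check is the only real content beyond the straightforward bookkeeping.
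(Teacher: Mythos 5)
Your proof is correct and follows essentially the same route as the paper's: invoke Lemma \ref{pinning_delta} for gradability and the values of $p(a_i),p(d_i),p(\beta)-p(\alpha),p(\rho)-p(\sigma)$; normalize $W_0,L_1,W_1$ by pinning one intersection point each; then propagate degrees through the products of Theorems \ref{maincalc} and \ref{testthm}. The only difference is that you spell out the uniqueness step and the specific chain of $\mu^2$ relations used, where the paper is terser and gives a $\mu^3$ relation as its illustrative example.
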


\begin{proof} Lemma \ref{pinning_delta} with $\delta=1$ and $N=0$  shows that $L_0,L_1, W_0, W_1$ are gradable with respect to $\lambda$, and that $p_\cL(a_i)=0$ and $p_\cL(d_i)=1$.  Given a grading of $L_0$,  $W_1$ can be graded so that $\tau\in (W_1,L_0)$   has any desired grading. Choose a grading of $W_1$ so that $p_\cL^\lambda(\tau)=-1$.   Similarly, choose a grading of $W_0$ so that $\alpha\in (W_0,L_0)$ satisfies $p_\cL^\lambda(\alpha)=0$.  Lemma 
\ref{pinning_delta} then implies that $p_\cL^\lambda(\beta)=-2$.  Finally, choose a grading of $L_1$ so that $\gamma\in (W_0, L_1)$ satisfies $p_\cL^\lambda(\gamma)=-1$.

The other calculations of Theorems \ref{maincalc} and \ref{testthm} give further linear 
relations on the gradings, since  $\mu^k$ has degree $2-k$.
For example, $\mu^3(q_{10}, b_0, p_{01})=a_1$ 
implies that $p_\cL^\lambda(q_{10})+p_\cL^\lambda(b_0)+p_\cL^\lambda(p_{01})= p_\cL^\lambda(a_1)+1=1.$
The remaining formulae are the result of solving similar equations from Theorems \ref{maincalc} and \ref{testthm}.
\end{proof}

 \section{Tangle categories, following Bar-Natan}\label{BNsect}
 In this section, we describe a category called the {\em dotted cobordism category}, a minor variant of a construction of Bar-Natan.   
\subsection{$2$-stranded tangles and planar tangles}

Fix $2m$ points on the boundary of the unit disk $D^2$, say $B_{2m}=\{e^{\pi \bbi \ell/m} \mid\ell=0, \dots,2m-1\}. $   A {\em planar $m$-tangle} is a properly embedded compact 1-dimensional submanifold of $D^2$ with boundary equal to $B_{2m}$. Thus a planar $m$-tangle has $m$ arc components and some number of circle components.  

A (generic) {\em $m$-tangle} is a properly embedded compact 1-dimensional submanifold of $D^2\times I$ with boundary equal to $B_{2m}\times\{\tfrac 1 2 \}$ such that the projection to $D^2$ has  finitely many transverse double points. In the usual way, the projection of an $m$-tangle $T$ to $D^2$ which has $d$ double points determines $2^d$ planar $m$-tangles by taking complete resolutions of the double points. If the $d$ double points are ordered, each complete resolution can be labelled with a sequence of $0$s and $1$s of length $d$, with $0$ corresponding to the ``overcrossing turns left'' resolution. 

In this article, we focus on  2-tangles.   Figure \ref{tripleballfig} illustrates three 2-tangles $T_\times, T_0,$ and $ T_1$.  The tangles $T_0$ and $T_1$ are planar, and are the two resolutions of the non-planar tangle $T_\times$. 
 Every other planar tangle is isotopic to  the planar tangle obtained from either $T_0$ or $T_1$ by adding circle components in the complementary regions.

    \begin{figure} 
\begin{center}
\def\svgwidth{3.5in}
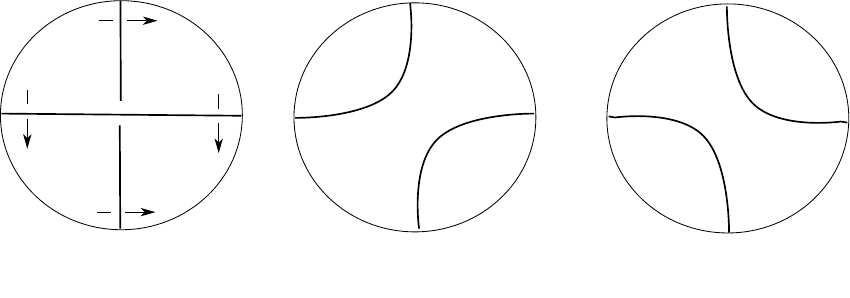
 \caption{\label{tripleballfig} }
\end{center}
\end{figure}  

\begin{definition}
For $\ell=0,1$ and $n=0,1,2,\dots$, let $T_\ell(n),$ denote the  set of all  planar 2-tangles with $n$ circle components and whose two arc components are isotopic to $T_\ell$.  There are several such planar isotopy classes, corresponding to where the circles are placed in complementary regions, but they are all isotopic in $D^2\times I$.   We will say that a planar 2-tangle $P\in T_\ell(n)$ {\em has type $T_\ell(n)$}. When no chance of confusion is possible we will 
abuse notation by letting $T_\ell(n)$ denote any planar 2-tangle in the set $T_\ell(n)$. \end{definition}

Our emphasis in this article is on {\em reduced} Khovanov homology, which is defined by making a choice of base point on a link or tangle.   We place this base point at $1\in B_{2n}\subset \partial D^2$ and call it the {\em earring}.  The terminology comes from \cite{HHK1, HHK2} (see also Appendix \ref{tracelesssect}) and corresponds to the gauge-theoretic construction introduced in \cite{KM1}.  It also serves to distinguish it from the {\em dots} introduced below in Bar-Natan's dotted cobordism category.   Given an $n$-tangle (planar or not) $T$, we call the arc component of $T$ with one end point $1$ the {\em earringed component}, and the the other arcs the {\em unearringed components} of $T$.  For example, the earringed component of $T_0$ is the arc with endpoints $1$ and $-\bbi$.

\subsection{The dotted cobordism category}

We construct, following \cite{BN} very closely, a category $\cD$ of planar 2-tangles.  Bar-Natan defines much more general $n$-tangle categories,    which carry an additional planar algebra structure.    We  only  consider  $2$-tangles in this article.

Construct two  categories, $\widetilde \cD$ and  $\cD$ of planar 2-tangles and dotted cobordisms in the unit disk $D^2$ as follows. 
The objects of  $\widetilde \cD$ and $\cD$ are the   planar 2-tangles  with boundary $B_4=\{1, \bbi, -1, -\bbi\}$.

For any pair $P_0,P_1$ of planar 2-tangles, define  $\widehat{\Hom}(P_0,P_1)$ to be the $\FF$ vector space with basis the set of isotopy classes of embedded oriented cobordisms rel $B_4$ with dots (a choice of finitely many distinguished points on the cobordism)
in $D^2\times[0,1]$ which restrict to $P_0$ and $P_1$ at the ends of the interval. 

For the category $\widetilde \cD$, define  $\Hom_{\widetilde \cD}(P_0,P_1)$ as the quotient of $\widehat{\Hom}(P_0,P_1)$ by the  additional   local relations  in Figure \ref{dottedfig} (see Bar-Natan \cite[Section 11.2]{BN}).    \begin{figure} 
\begin{center}
\def\svgwidth{3.8in}
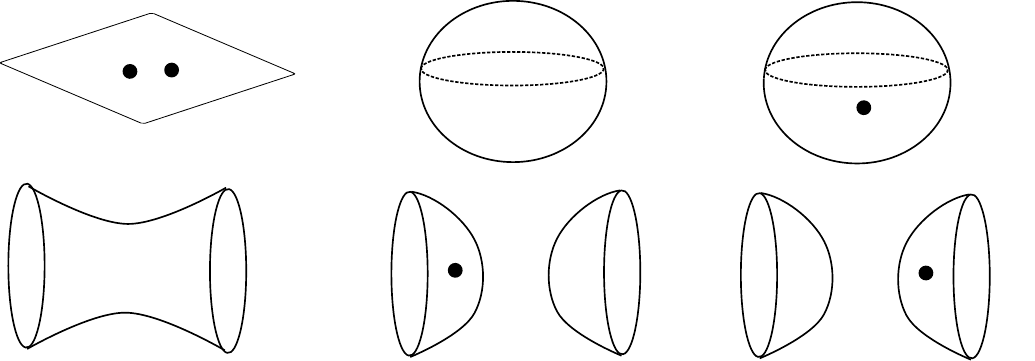
 \caption{\label{dottedfig} }
\end{center}
\end{figure}
The second and third relations in this list mean that, if $C$ is a cobordism and we change it by adding a disjoint 2-sphere with or without a dot, then 
$\mbox{(undotted sphere)} \sqcup C =0$ and $\mbox{(dotted sphere)} \sqcup C = C$.

  Since our focus is strictly on reduced Khovanov homology, we impose one additional relation on the morphisms, which will shorten some calculations and formulas. Thus, for  
 the category $\cD$, define the $\Hom_{\cD}(P_0,P_1)$ as the quotient of $\widehat{\Hom}(P_0,P_1)$ by the  subspace generated by Bar-Natan's  local relations and one additional relation, namely that   
   \begin{align}\label{kill}
  \text{ \it Any dotted cobordism carrying a dot  on the path component }\hskip.3in \\
  \text{\it  containing the earring (ie. passing through } 
 1\in B \text{\it  ) is equivalent to zero.} \notag
  \end{align}

Placing a dot on a component of a cobordism corresponds to ``multiplying by $x$'' on that component.  The reduced Khovanov complex is defined as the kernel of the  ``multiply by $x$ on the earringed component'' morphism (see Section \ref{circlecomp}), justifying (\ref{kill}).

Composition is given by stacking cobordisms, and we extend this linearly. The product cobordism  $P_0\times I$ with no dots represents the identity morphism.

Following Bar Natan, endow  $\Hom_\cD(P_0,P_1)$ with a $\ZZ$-grading by defining the grading of a dotted cobordism $C$ to be 
\begin{equation}
\label{BNgrading}
p_\cD(C)=-2+\chi(C)-2\cdot\text{(number of dots)}
\end{equation}
where $\chi$ denotes the Euler characteristic.
 Since each arc has $\chi =1$, and closed components have $\chi =0$, this is additive with respect to composition of cobordisms. 
 The relations that declare different dotted cobordisms to be equivalent involve cobordisms with the same grading, and hence $\cD$ is a $\ZZ$-graded abelian category.  It will be convenient for us to view $\cD$  as an $A_\infty$-category with $\mu^2$ defined by composition and all other $\mu^n$ zero.

  \medskip

We turn now to a calculation of $\Hom_\cD(P_0,P_1)$ for any two planar tangles $P_0, P_1$. We begin with the simple cases when the $P_\ell$ have no circle components. For $\ell\ne {\ell'}\in \{0,1\}$, let $S_{{\ell'}\ell}\in  \Hom_{\cD}(T_\ell,T_{\ell'})$ be the obvious saddle cobordism from $T_\ell$ to $T_{\ell'}$. 
 \begin{proposition}\label{easy} Let $\ell, \ell'\in \{0,1\}$.
 The vector space $\Hom_{\cD}(T_\ell,T_{\ell'})$ is
 \begin{enumerate}
\item   1-dimensional with basis the 1-saddle cobordism $S_{{\ell'}\ell}$ if $\ell\ne \ell'$.
\item 2-dimensional with basis $\{A_\ell, C_\ell\}$, where $A_\ell$ is the product cobordism $T_\ell\times I$, and    $C_\ell$ is the product cobordism $T_\ell\times I$ with a dot on the unearringed component if $\ell=\ell'$.
\end{enumerate}  
The gradings are given by $$p_\cD(A_\ell)=0, ~p_\cD(C_\ell)=-2, ~p_\cD(S_{\ell'\ell})=-1.$$
The composition operation $\mu_\cD^2$, written multiplicatively, is given  by 
\begin{align*}A_\ell^2=A_\ell,~A_\ell C_\ell=C_\ell=C_\ell A_\ell,~ C_\ell^2=0, \hskip.5in& \\ ~  A_{\ell'}S_{\ell'\ell}=S_{\ell'\ell}=S_{\ell'\ell}A_\ell, ~C_{\ell'}S_{\ell'\ell}=0=S_{\ell'\ell}C_\ell,
~
S_{\ell \ell'}S_{\ell'\ell}=C_{\ell'}. &
\end{align*}
\end{proposition}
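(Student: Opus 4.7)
The plan is to put any dotted cobordism $C : T_\ell \to T_{\ell'}$ in normal form using Morse theory together with the relations in Figure \ref{dottedfig} and the earring-killing relation (\ref{kill}), and then to recognize the basis elements and composition laws directly. First I put $C \subset D^2 \times [0,1]$ in generic position with respect to the height function, decomposing it into elementary $0$-, $1$-, and $2$-handles. Neck-cutting (the fourth relation of Figure \ref{dottedfig}) lets me trade genus for dots, and the three sphere relations evaluate any closed component. Iterated application reduces $C$, modulo the Bar-Natan relations, to a linear combination of dotted cobordisms whose underlying surface is the minimal-genus connecting surface between $T_\ell$ and $T_{\ell'}$, with dots distributed on its components. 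The standard consequence of neck-cutting together with the double-dotted sphere relation is that two dots on the same connected component represent the zero morphism; combined with (\ref{kill}), every normal-form cobordism is a disjoint union of disks carrying at most one dot, and never a dot on a component meeting the earring $1 \in B_4$.

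When $\ell = \ell'$, the minimal connecting surface is the product $T_\ell \times I$, consisting of two disks, one earringed and one not; the allowed normal forms are exactly the undotted product $A_\ell$ and the product with a single dot on the unearringed component, $C_\ell$. When $\ell \ne \ell'$, the minimal connecting surface is a single disk realizing one saddle move; this disk meets the earring, so no dots are allowed, leaving $S_{\ell'\ell}$ as the only generator. Linear independence of the proposed bases follows because the normal-form reduction respects the bigrading by Euler characteristic and dot count, so distinct normal forms cannot be identified by the relations. The main obstacle here, and the one requiring the most care, is verifying that the two-dots-on-a-component relation is really forced by Bar-Natan's axioms---a standard but slightly delicate diagrammatic computation involving neck-cutting applied to a trivial cylinder near a dot.

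The gradings then follow immediately from the formula (\ref{BNgrading}): $\chi(A_\ell) = 2$ with no dots gives $0$, $\chi(C_\ell) = 2$ with one dot gives $-2$, and $\chi(S_{\ell'\ell}) = 1$ with no dots gives $-1$. To verify the composition formulas, I stack the cobordisms and reduce the composite to normal form: $A_\ell^2 = A_\ell$ and $A_\ell C_\ell = C_\ell$ are immediate from the identity nature of $A_\ell$, $C_\ell^2 = 0$ uses the two-dot relation on the unearringed disk, each product involving a dot that migrates onto the earringed component (such as $C_{\ell'} S_{\ell'\ell}$ or $S_{\ell'\ell} C_\ell$) vanishes by (\ref{kill}) once one tracks which component inherits the dot after the saddle, and stacking two opposite saddles $S_{\ell\ell'} S_{\ell'\ell}$ produces an annular region whose neck-cutting reduction yields the claimed dotted identity modulo terms killed by (\ref{kill}).
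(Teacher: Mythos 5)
Your argument for the \emph{spanning} step is essentially the paper's: reduce any dotted cobordism to a disjoint union of disks using neck-cutting and the sphere relations, then observe that relation (\ref{kill}) kills any dot on a disk meeting the earring, leaving exactly the normal forms $A_\ell$, $C_\ell$, $S_{\ell'\ell}$. The composition formulas are also handled in essentially the same way. (A minor quibble: in Bar-Natan's setup, ``two dots on a face $=0$'' is imposed as a local relation, not derived from neck-cutting together with a doubly-dotted sphere relation; but since two dots on the same component can always be slid adjacent by isotopy, this does not change anything in the argument.)

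However, your \emph{linear independence} argument has a genuine gap. You write that ``the normal-form reduction respects the bigrading by Euler characteristic and dot count, so distinct normal forms cannot be identified by the relations.'' Homogeneity of the relation ideal with respect to $p_\cD$ does show that the quotient is graded and that $A_\ell$ and $C_\ell$, living in gradings $0$ and $-2$, cannot be set equal to one another. But it does not show that either one is \emph{nonzero} in the quotient. A homogeneous relation ideal can perfectly well kill an entire graded piece; for instance, nothing in the grading rules out $A_\ell=0$ (which would force the whole morphism space from $T_\ell$ to collapse) or $C_\ell = 0$. Establishing the lower bound on the dimensions requires a concrete nonvanishing witness. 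This is exactly what the paper's proof supplies and yours omits: one closes off $T_0$ and $T_1$ by the two trivial planar tangles outside the disk, obtaining functors from $\cD$ to the analogous dotted cobordism category on one-component closed diagrams, then composes with the reduced Khovanov homology functor to vector spaces, where the images of $A_\ell$, $C_\ell$, $S_{\ell'\ell}$ can be checked to be linearly independent directly. Without some such realization, the dimension claims in the proposition are not proved.
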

\noindent{\em Sketch of proof.} 
 Bar-Natan's relations, illustrated in Figure \ref{dottedfig}, show that $\Hom_{\cD}(T_\ell,T_{\ell'})$ is spanned by a collection of cobordisms, each of which is homeomorphic to a union of disjoint disks.
 
  If $\ell\ne {\ell'}$, then the cobordism is a disk in $D^2\times I$ with connected boundary $T_0\cup (B\times I)\cup T_1$, and by the Sch\"onflies theorem, isotopic to $S_{\ell{\ell'}}$.   Since this is connected, the additional relation  (\ref{kill}) shows that to be non-zero, this cobordism cannot contain any dots.  Thus $S_{\ell{\ell'}}$ generates $\Hom_{\cD}(T_\ell,T_{\ell'})$. 
  
  If $\ell=\ell'$, then a similar argument shows that any cobordism consisting only of disk components is a product.  There are two possibilities, according to whether or not the unearringed component has a dot.
  
Thus the stated cobordisms span the space of morphisms.  We can see  that the equivalence classes of $\{ A_\ell, C_{\ell'}\}$ and $\{S_{\ell{\ell'}}\}$ are linearly independent sets as follows.  This is easy to verify if we consider the two trivial planar tangles outside the disk that close off $T_0$ and $T_1$ to make a two-component planar curve $C$.  Each of these represents a functor from $\cD$ to the analogous dotted cobordism category with one object $C$.   Composing each of these with the reduced Khovanov homology functor to the category of vector spaces, it is straightforward to check that these are linearly independent.     

It is clear that pre- or post-composing with $A_\ell$ does not change the equivalence class of any dotted cobordism.  
The relations  $C_{\ell'}S_{\ell{\ell'}}=0=S_{\ell{\ell'}}C_\ell$ follow from the additional relation that any cobordism with a dot on the earringed component is equivalent to zero. Finally, $S_{\ell{\ell'}}S_{\ell{\ell'}}=C_{\ell'}$ follows from the neck cutting relation (the last relation in Figure \ref{dottedfig}) combined with the additional relation  (\ref{kill}).
\qed 
\medskip

\subsection{Algebras} \label{algebras}
Although not needed for  our results, in this section we  make some observations about the ($\ZZ$-graded) categories $\cD,~ \widetilde \cD$ and $\cL$.  These motivate our use of the category $\cD$ rather than the larger $\widetilde \cD$.

Form a  12 dimensional $\ZZ$-graded $\FF$-algebra $A_\cL$ by taking
\begin{equation}\label{Lalgebra}\begin{split}
A_\cL&=\Hom_\cL(L_0,L_0)\oplus\Hom_\cL(L_1,L_1)\oplus\Hom_\cL(L_0,L_1)\oplus\Hom_\cL(L_1,L_0)\\
&=\langle a_0,b_0,c_0,d_0\rangle\oplus\langle a_1,b_1,c_1,d_1\rangle\oplus\langle p_{10},q_{10}\rangle\oplus\langle p_{01},q_{01}\rangle
\end{split}\end{equation}
with multiplications using the  $\mu^2_\cL$ of Theorem \ref{maincalc} when defined, and set    equal zero   when a pair of morphisms are not composable, and extended linearly. Gradings are given  using Proposition \ref{prop:gradings}.

Similarly, form  the 6-dimensional $\ZZ$-graded   algebra:
\begin{equation*}\label{Dalgebra}\begin{split}
A_\cD&=\Hom_\cD(T_0,T_0)\oplus\Hom_\cD(T_1,T_1)\oplus\Hom_\cD(T_0,T_1)\oplus\Hom_\cD(T_1,T_0)\\
&=\langle A_0,C_0\rangle\oplus\langle A_1,C_1\rangle\oplus\langle S_{10}\rangle\oplus\langle S_{01}\rangle
\end{split}\end{equation*}
with products and $\ZZ$-gradings   given  in  Proposition \ref{easy}.

Thirdly, using Bar Natan's  category $\widetilde \cD$, in which one does not impose the relation (\ref{kill}), form  the 12-dimensional $\ZZ$-graded algebra 
\begin{equation*}\label{Dtilalgebra}\begin{split}
A_{\widetilde \cD}&=\Hom_{\widetilde \cD}(T_0,T_0)\oplus\Hom_{\widetilde \cD}(T_1,T_1)\oplus\Hom_{\widetilde \cD}(T_0,T_1)\oplus\Hom_{\widetilde \cD}(T_1,T_0)\\
&=\langle A_0,B_0,C_0,D_0\rangle\oplus\langle A_1,B_1,C_1,D_1 \rangle\oplus\langle S_{10},S_{10}'  \rangle\oplus\langle S_{01}, S_{01}'\rangle
\end{split}\end{equation*}
where $B_i$ is the product cobordism from $T_i$ to itself which has a dot on the earringed component, $D_i$ is the product cobordism from $T_i$ to itself which has a dot on both components, and $S_{ij}'$ are the saddle cobordisms which contain a dot.  (A similar argument to that of Proposition \ref{easy} shows that these cobordisms form a basis of  $\cA_{\widetilde \cD}$.) Then   $A_i$ has grading $0$, $B_i,~C_i$ have grading $-2$, $D_i$ has grading $-4$, $S_{ij}$ has grading $-1$, and $S_{ij}'$ has grading $-3$.

\medskip

Theorem \ref{maincalc} and Proposition \ref{easy}, and parity considerations can be used to verify the following.  

\begin{proposition}\label{Feasy} \hfill
\begin{enumerate}
\item The 
assignment
 $$A_i\mapsto a_i, ~C_i\mapsto c_i, ~S_{10}\mapsto q_{10}, ~S_{01}\mapsto p_{01}$$
 defines  an injective, $\ZZ$-grading preserving, algebra homomorphism
$$A_\cD\to A_\cL.$$
\item  The assignment 
\begin{align*}A_i\mapsto a_i, ~B_i\mapsto b_i, ~C_i\mapsto b_i+c_i, ~D_i\mapsto d_i, \\
 ~S_{10}\mapsto q_{10}, ~S_{01}\mapsto p_{01},  ~S_{10}'\mapsto p_{10}, ~S_{01}'\mapsto q_{01},\end{align*}
 defines an algebra isomorphism 
 $$A_{\widetilde \cD}\to A_\cL.$$
 This isomorphism is {\em not}  grading preserving.  In fact  $A_{\widetilde \cD}$ and $A_\cL $ are not isomorphic as graded algebras.  Moreover,
 there does not exist {\em any}  line field  on $P^*$ in which $L_0, L_1$ are $\ZZ/2$-graded,  such that   $A_\cL$ and $A_{\widetilde \cD}$  are $\ZZ/2$-graded isomorphic.  
 \end{enumerate}

\qed 
\end{proposition}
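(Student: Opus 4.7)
For statement (1), my plan is to verify that the assignment defines a grading-preserving, injective algebra homomorphism. To verify compatibility with multiplication, I would compare the product table for $A_\cD$ given in Proposition \ref{easy} against the subset of $\mu^2_\cL$ relations in Theorem \ref{maincalc} that involve only the target generators $a_i, c_i, p_{01}, q_{10}$. Concretely, the unit relations $A_\ell^2 = A_\ell$ and $A_\ell C_\ell = C_\ell$ correspond to the strict unitality clause of Theorem \ref{maincalc}; the vanishing relations $C_\ell^2 = 0$ and $C_{\ell'}S_{\ell'\ell} = 0 = S_{\ell'\ell}C_\ell$ correspond to the absence of these particular products from the list of nonzero $\mu^2_\cL$; and $S_{10}S_{01}=C_1$, $S_{01}S_{10} = C_0$ correspond to $\mu^2(q_{10},p_{01})=c_1$ and $\mu^2(p_{01},q_{10})=c_0$. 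The gradings then match by direct comparison of Proposition \ref{easy} and Proposition \ref{prop:gradings}. Injectivity is automatic by dimension count, since the images are six of the twelve basis vectors of $A_\cL$ and $A_\cD$ is itself six-dimensional.

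For the first assertion in statement (2), the plan is to verify bijectivity and multiplicativity. The map sends the twelve basis vectors of $A_{\widetilde\cD}$ to a collection of twelve elements of $A_\cL$ that is immediately seen to be a basis, so it is a linear bijection. To check multiplicativity, I would compute the multiplication table of $A_{\widetilde\cD}$ using the local relations of Figure \ref{dottedfig} (neck cutting and sphere evaluations) together with the saddle composition rules from Proposition \ref{easy}, and compare to Theorem \ref{maincalc}. The subtle choice is $C_i\mapsto b_i+c_i$: since $\mu^2(b_i,c_i)=\mu^2(c_i,b_i)=d_i$ while $\mu^2(b_i,b_i)$ and $\mu^2(c_i,c_i)$ are absent from Theorem \ref{maincalc} and therefore zero, the characteristic-two identity $(b_i+c_i)^2 = 0 + d_i + d_i + 0 = 0$ matches $C_i^2 = 0$ in $A_{\widetilde\cD}$ (which follows from the double-dot-on-one-component relation), whereas the naive choice $C_i \mapsto c_i$ already passes this test but the saddle products $S'_{10}S_{01}$ etc. force the extra $b_i$ term. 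Non-preservation of the $\ZZ$-grading is then immediate from $p_\cD(B_i) = -2$ versus $p_\cL(b_i) = 3$.

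For nonexistence of a $\ZZ$-graded isomorphism I would simply observe from Propositions \ref{easy} and \ref{prop:gradings} that $A_{\widetilde\cD}$ has nonzero homogeneous parts in exactly five degrees, namely $\{-4,-3,-2,-1,0\}$, while $A_\cL$ has nonzero homogeneous parts in six degrees, $\{-2,-1,0,1,2,3\}$; since any graded isomorphism would induce a bijection between these support sets after a global shift, no such isomorphism can exist. For the $\ZZ/2$-graded statement, the key input is that $\mu^k$ has degree $2-k$, so reduced mod $2$ the relation $\mu^3(c_0,b_0,c_0) = c_0$ from Theorem \ref{maincalc} forces $p^\lambda_\cL(b_0) + p^\lambda_\cL(c_0) \equiv 1 \pmod 2$ for every line-field grading, and analogously for the index $1$ generators. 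The remaining $\mu^2$ relations $\mu^2(p_{01},q_{10})=c_0$, $\mu^2(p_{01},p_{10})=d_1$, $\mu^2(q_{01},q_{10})=d_0$, $\mu^2(b_0,p_{01})=q_{01}$, $\mu^2(q_{10},b_0)=p_{10}$ then determine all parities in terms of two free bits. A routine enumeration of these four cases shows that the even part of $A_\cL$ has dimension exactly $6$ in every case, whereas the even part of $A_{\widetilde\cD}$ has dimension $8$ (spanned by $A_i,B_i,C_i,D_i$); the dimensions of the even and odd components are invariants of the $\ZZ/2$-grading, so no $\ZZ/2$-graded isomorphism can exist. The main obstacle is the parity bookkeeping in this last step, together with the preliminary task of tabulating the multiplication in $A_{\widetilde\cD}$; once those are in place the verifications are mechanical.
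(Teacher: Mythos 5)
Your proof is correct and follows exactly the route the paper indicates in the sentence preceding the proposition (``Theorem \ref{maincalc} and Proposition \ref{easy}, and parity considerations can be used to verify the following''); the paper itself gives no proof, so you are filling in the intended details. The direct check of multiplicativity against the tables in Proposition \ref{easy} and Theorem \ref{maincalc}, the computation $(b_i+c_i)^2 = d_i + d_i = 0$ explaining the non-obvious assignment $C_i\mapsto b_i+c_i$, the comparison of degree supports (five versus six distinct degrees, or alternatively the $4$-dimensional homogeneous piece of $A_{\widetilde\cD}$ versus all $2$-dimensional pieces in $A_\cL$), and the four-case parity enumeration showing $\dim(A_\cL)_{\mathrm{even}}=6\neq 8 =\dim(A_{\widetilde\cD})_{\mathrm{even}}$ are all sound and match the ``parity considerations'' the authors clearly had in mind.
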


\subsection{Tangles with circle components}\label{circlecomp}
We extend the construction to include planar tangles which contain closed circle components, as these arise in complete resolutions of a projection of a 2-tangle.

\medskip

 Let  $A=\{0,1,x,x+1\}$, which we think of  as a two dimensional graded vector space with basis $\{1,x\}$, graded  by $p_A(1)=1$, $p_A(x)=-1$.   It is usual to write $A=\FF[x]/(x^2=0)$, so that 
multiplication by $x$ defines a linear map $x:A\to A$ with kernel the 1-dimensional graded vector subspace
  \begin{equation}
\label{defnofR}
R:=\langle x\rangle\subset A=\langle 1, x\rangle.
\end{equation}

  Define four linear maps
 \begin{equation}\label{epeta}
 \begin{split}
 &\epsilon:A\rightarrow \FF, \ \epsilon(x)=1,\ \epsilon(1)=0,\\
 & \dot{\epsilon}:A\rightarrow \FF,\ \dot{\epsilon}(x)=0,\ \dot{\epsilon}(1)=1,\\
&\eta:\FF\rightarrow A,\ \eta(1)=1,\\
&\dot{\eta}:\FF\to A,\ \dot{\eta}(1)=x.
 \end{split}
 \end{equation}
 
   Viewing  $\FF$ as a graded vector space lying entirely in grading 0, we compute
 \begin{equation}\label{mapgr}
 p_A(\ep)=1,\ p_A(\dot{\epsilon})=-1, \ p_A(\eta)=1, \ p_A(\dot{\eta})=-1\end{equation}

 \begin{definition}\label{defpfB2} 
 Given $n,m\geq 0$, define a basis $\cB_{n,m}$ for 
 $$(A^*)^{\otimes n}\otimes A^{\otimes m}= \Hom_\FF(A^{\otimes n},A^{\otimes m}),$$ where $V^{\otimes 0}$ denotes $\FF$, as follows.  Fix bases  for $\{\ep,\dot\ep\}$ for $A^*=\Hom_\FF(A,\FF)$ and $\{\eta, \dot\eta\}$ for $A$, and then take the basis $\cB_{n,m}$ to be appropriate monomials in   the higher tensor products, e.g.  $(\ep\otimes\dot\ep)\otimes (\eta\otimes\dot\eta\otimes\eta)\in\cB_{2,3}$.  The basis  $\cB_{n,m}$ contains $2^{n+m}$ elements.  Equation (\ref {mapgr}) extends to tensor products to define $p_A({\bf b})\in \ZZ$ for every ${\bf b}\in \cB_{n,m}$ in the usual way.
\end{definition}

 Recall that $T_\ell(n), \ell=0,1$ denotes any planar tangle whose two arc components are isotopic to $T_\ell$, as in Figure \ref{tripleballfig}, and which has $n$ closed (i.e., circle) components.    
 Any cobordism $C$ from $T_\ell$ to $T_{\ell'}$ can be extended to a cobordism from (any) $T_\ell(n)$ to $T_{\ell'}(m)$,   by capping off  the $n$ circles in $T_\ell(n)$  with disks, called {\em caps}, eliminating innermost circles first  in any nested families, and then  introducing $m$ disks, called {\em cups}, bounded by the $m$ circles in $T_{\ell'}(m)$. The resulting cobordism, $C'$, is unique up to isotopy. There are $2^{m+n}$ corresponding {\em dotted} cobordisms, obtained by placing zero or one dot on each of these $m+n$ disk components of $C'$.   Ordering the $n$ circles in $T_\ell(n)$  and the $m$ circles in $T_{\ell'}(m)$ sets up an identification between the dotted cobordisms   obtained from $C'$ by placing dots on the cups and caps, and the basis $\cB_{n,m}$ of $(A^*)^{\otimes n}\otimes A^{\otimes m}$.   We denote the dotted cobordism obtained from $C'$ by placing dots on the cups and caps according to this scheme by $C_{\bf b}$.

 The following extension of Proposition \ref{easy} is   immediate.
 
  \begin{proposition}\label{easy2} A basis of  $\Hom_{\cD}(T_\ell(n), T_{\ell'}(m))$ is given by  $\{S_{\ell'\ell,{\bf b} }\}_{{\bf b}\in \cB_{n,m}}$ when ${\ell}\ne {\ell'}$, and by 
  $\{A_{{\ell},\bf b},C_{{\ell},\bf b}\}_{{\bf b}\in \cB_{n,m}}$ when ${\ell}={\ell'}$.
  This sets up an identification:   
 $$ \Hom_{\cD}(T_{\ell}(n), T_{\ell'}(m))= (A^*)^{\otimes n}\otimes A^{\otimes m}\otimes \Hom_\cD(T_{\ell},T_{\ell'}).$$
 \end{proposition}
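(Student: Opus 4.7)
The plan is to exhibit an explicit natural isomorphism
$$\Phi:(A^*)^{\otimes n}\otimes A^{\otimes m}\otimes \Hom_\cD(T_\ell,T_{\ell'})\xrightarrow{\cong}\Hom_\cD(T_\ell(n),T_{\ell'}(m))$$
obtained by ``capping off'' the circle components.  Given a basis element ${\bf b}\in\cB_{n,m}$ and a basis cobordism of $\Hom_\cD(T_\ell,T_{\ell'})$ provided by Proposition \ref{easy}, $\Phi$ produces the dotted cobordism described just before the statement: attach $n$ caps bounding the source circles of $T_\ell(n)$ and $m$ cups bounding the target circles of $T_{\ell'}(m)$, dotting each cap or cup precisely when ${\bf b}$ calls for $\dot\ep$ or $\dot\eta$ in the corresponding tensor slot, and leaving it undotted for $\ep$ or $\eta$.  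The proposition then reduces to the bijectivity of $\Phi$, together with the matching dimension count.

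For surjectivity I would follow the reduction sketched for Proposition \ref{easy}.  Starting from an arbitrary dotted cobordism $C:T_\ell(n)\to T_{\ell'}(m)$, I apply the neck-cutting relation (last relation in Figure \ref{dottedfig}) along a maximal family of compressing disks until $C$ becomes a disjoint union of disks plus closed components; the sphere relations eliminate any closed surfaces.  A standard further consequence of Bar-Natan's relations shows that $x^2=0$, i.e.\ any disk carrying two or more dots reduces to zero.  The ``backbone'' disks connecting $B_4$ are then governed by Proposition \ref{easy}, while the $n$ caps and $m$ cups record the $\cB_{n,m}$ data; finally, the earring relation (\ref{kill}) annihilates any term in which the backbone component meeting the earring carries a dot.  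This exhibits every cobordism as a linear combination of elements in the image of $\Phi$.

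Injectivity I would establish by pairing each proposed basis element against carefully chosen test morphisms.  Close off $T_\ell(n)$ and $T_{\ell'}(m)$ with a fixed planar $2$-tangle outside $D^2$ and suitably chosen cups and caps, producing a cobordism between closed dotted links; evaluation via the reduced Khovanov TQFT (equivalently, via the Frobenius-algebra structure on $A$ with $\eta,\dot\eta,\ep,\dot\ep$ from Equation (\ref{epeta})) then yields a scalar in $\FF$.  By choosing the closure and its dot pattern so that exactly one of the proposed basis cobordisms pairs nontrivially while all others pair to zero, one obtains a family of $2^{n+m}\cdot\dim \Hom_\cD(T_\ell,T_{\ell'})$ linearly independent functionals, proving linear independence of the set in the statement and confirming that $\Phi$ is an isomorphism.

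The main obstacle is the neck-cutting reduction: verifying that every dotted cobordism, no matter how topologically complex, actually simplifies into the image of $\Phi$ using only Bar-Natan's relations and the earring relation (\ref{kill}).  Rather than building a canonical reduction algorithm, the cleanest approach is to argue bijectivity of $\Phi$ from the two sides described above, using the matching count $2^{n+m}\cdot\dim\Hom_\cD(T_\ell,T_{\ell'})$ to conclude that the generators listed in the statement form a basis and that the tensor-product identification is natural.
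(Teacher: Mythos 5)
Your proposal is correct and follows essentially the same route the paper implicitly intends: the paper offers no written proof of Proposition \ref{easy2}, calling it an ``immediate'' extension of Proposition \ref{easy}, and your argument is exactly the natural elaboration of that proof (neck-cutting and sphere relations to reduce to disjoint disks, with the earring relation and dot relations pinning down each disk, then pairing against closures to establish linear independence). The only minor imprecision is your characterization of $x^2=0$ as a ``further consequence'' of the relations, when in fact the two-dot relation is one of the local relations drawn directly in Figure \ref{dottedfig}; this does not affect the argument.
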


\section{A pair of $A_\infty$-functors $\cF:\cD\to \Sigma\cL$ and $\cG:\cL\to Ch$}\label{functors}
The traceless character variety $R^\nat_\pi(D^2\times I, T_\ell(m))$  is identified in Theorem \ref{fukayadudes2} with $\{\pm 1\}^m \times L_\ell$. Each path component is a circle which immerses in the pillowcase with one double point, illustrated in Figures \ref{exacttriplesfig}, and  illustrated in the flattened punctured pillowcase in Figure \ref{Generatorsfig}. We think of this collection of $2^m$ objects in $\cL$ as the  object 
$$A^{\otimes m}\otimes L_\ell\in {\rm Obj}(\Sigma\cL),$$ 
with $A=\FF[x]/\langle x^2\rangle$, as before. This assignment  re-expresses  Equation (\ref{tracelessfunctor}) as: 
 \begin{equation} 
\label{cF}\cF:\Obj(\cD)\to \Obj(\Sigma \cL), ~  
\cF(T_\ell(m))= A^{\otimes m} \otimes L_\ell.
\end{equation}

We extend this to a strictly unital $A_\infty$-functor  as follows. 
To express 
\begin{align*}\cF^1:\Hom_{\cD}(T_\ell(m),T_{\ell'}(n))\to&\Hom_{\Sigma\cL}(\cF(T_\ell(n)), \cF(T_{\ell'}(n)))\\
&= \Hom_{\Sigma\cL}(A^{\otimes m}\otimes L_\ell,A^{\otimes n}\otimes L_{\ell'})\\
&=\Hom_{\FF}(A^{\otimes m},A^{\otimes n})\otimes  \Hom_{\cL}(  L_\ell,  L_{\ell'})\\
&=(A^*)^{\otimes m}\otimes A^{\otimes n}\otimes   \Hom_{\cL}(L_\ell,L_{\ell'}),
\end{align*}
define
\begin{multline}
\label{F1}
\hskip.4in \cF^1:\Hom_{\cD}(T_\ell(n),T_{\ell'}(m))\to (A^*)^{\otimes m}\otimes A^{\otimes n}\otimes   \Hom_{\cL}(L_\ell,L_{\ell'}),\\
\cF^1(A_{i,\bf b})={\bf b}\otimes a_i,~ \cF^1(C_{i,\bf b})={\bf b}\otimes c_i, ~\cF^1(S_{10,\bf b})= {\bf b}\otimes q_{10}, ~\cF^1(S_{01,\bf b})= {\bf b}\otimes p_{01} \hskip0in
\end{multline}
 and then extend linearly.  Finally, define 
 \begin{equation}
\label{Fn}
\cF^n=0 \text{ for }n>1\end{equation}

 \begin{theorem} The assignments $\cF, \cF^n$ of Equations (\ref{cF}),  (\ref{F1}), and (\ref{Fn})  define a strictly unital $A_\infty$-functor
 $$\cF:\cD\to \Sigma\cL$$   for which $\cF^1$ is injective on morphism spaces.  \end{theorem}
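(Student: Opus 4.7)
The plan is to verify the $A_\infty$-functor equations directly, exploiting the fact that $\cF^n \equiv 0$ for $n\ge 2$ and that both $\cD$ and $\cL$ have $\mu^1 \equiv 0$ (the former by definition, the latter by Theorem \ref{maincalc}). With these vanishings, the $A_\infty$-functor relation at arity $n$ collapses dramatically. On the $\cD$-side only $\mu^2_\cD$ contributes on the right, and on the $\Sigma\cL$-side only the single term $\mu^n_{\Sigma\cL}(\cF^1(a_n),\dots,\cF^1(a_1))$ survives on the left. Thus the relation at arity $n$ reduces to the following four cases.

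\emph{Case $n=1$:} both sides vanish because $\mu^1_\cD = 0 = \mu^1_\cL$. \emph{Case $n=2$:} I need $\mu^2_{\Sigma\cL}(\cF^1(a_2),\cF^1(a_1)) = \cF^1(\mu^2_\cD(a_2,a_1))$, i.e., $\cF^1$ is a homomorphism for composition. Since $\mu^2_{\Sigma\cL}$ acts by tensoring the natural composition on $(A^*)^{\otimes m}\otimes A^{\otimes n}$ factors with $\mu^2_\cL$ on the Lagrangian factors, and $\cF^1$ preserves the $\cB_{n,m}$-tensor factor tautologically (by Proposition \ref{easy2} and formula (\ref{F1})), this reduces to checking that the restriction of $\mu^2_\cL$ to the subalgebra $\langle a_i,c_i,p_{01},q_{10}\rangle\subset A_\cL$ agrees with $\mu^2_\cD$ on $A_\cD$ under the map $A_i\mapsto a_i$, $C_i\mapsto c_i$, $S_{10}\mapsto q_{10}$, $S_{01}\mapsto p_{01}$. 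This is exactly the content of Proposition \ref{Feasy}(1), after comparing with Proposition \ref{easy} and Theorem \ref{maincalc}. \emph{Case $n\ge 4$:} automatic, since $\mu^n_\cL = 0$ by Theorem \ref{maincalc}, which forces $\mu^n_{\Sigma\cL} = 0$.

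\emph{Case $n=3$} is the main obstacle and requires genuine work. I must show that $\mu^3_{\Sigma\cL}(\cF^1(a_3),\cF^1(a_2),\cF^1(a_1))=0$ for every composable triple. Since $\mu^3_{\Sigma\cL}$ applies $\mu^3_\cL$ to the Lagrangian factors, this is equivalent to showing that every triple product $\mu^3_\cL(m_3,m_2,m_1)$ vanishes when each $m_i$ lies in the set $\{a_0,a_1,c_0,c_1,p_{01},q_{10}\}$ (the image generators from (\ref{F1})). The plan is to simply inspect the list of twenty-four nonzero $\mu^3_\cL$-products in Theorem \ref{maincalc}: each one of them contains at least one factor from $\{b_0,b_1,d_0,d_1,p_{10},q_{01}\}$, which is disjoint from the image of $\cF^1$. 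Hence every $\mu^3$-product on image triples vanishes, establishing the relation.

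Strict unitality follows because $\cF^1(A_\ell) = a_\ell$ by (\ref{F1}), and $a_\ell$ is the strict unit of $\cL$ by Theorem \ref{maincalc}; the higher conditions $\cF^n(\dots,e_X,\dots)=0$ are vacuous as $\cF^n=0$ for $n\ge 2$. Finally, injectivity of $\cF^1$ is immediate from Proposition \ref{easy2}: the indicated basis of $\Hom_\cD(T_\ell(n),T_{\ell'}(m))$ is carried bijectively to the linearly independent set $\{{\bf b}\otimes m \mid {\bf b}\in\cB_{n,m},\ m\in\{a_i,c_i,p_{01},q_{10}\}\cap \Hom_\cL(L_\ell,L_{\ell'})\}$ inside $(A^*)^{\otimes n}\otimes A^{\otimes m}\otimes \Hom_\cL(L_\ell,L_{\ell'})$.
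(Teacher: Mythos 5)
Your proof is correct and follows essentially the same route as the paper's: reduce the $A_\infty$-functor relations to the arity $2$ and $3$ cases using $\cF^n=0$ for $n\ge 2$ and the vanishing pattern of $\mu^n_\cD$, $\mu^n_\cL$; verify arity $2$ as ordinary functoriality of $\cF^1$; and dispose of arity $3$ by observing that every nonzero $\mu^3_\cL$ on generators involves at least one of $b_i,d_i,p_{10},q_{01}$, none of which lies in the image of $\cF^1$. The one thing the paper checks that you omit is that $\cF^1$ is \emph{degree-preserving} (comparing the $\ZZ$-gradings of Proposition \ref{easy} and Definition \ref{defpfB2} on the $\cD$-side with those of Proposition \ref{prop:gradings} on the $\cL$-side); this is required by the definition of an $A_\infty$-functor, and while it is a one-line verification you should not skip it.
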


\begin{proof} Referring to Proposition \ref{easy}  and Equation (\ref{fukgens}),   one sees that when $i=j$, $\Hom_\cD(T_i,T_i)$ is 2-dimensional, spanned by 
$\{ A_i, C_i\}$, and $\Hom_{\cL}(L_i,L_i)$ is 4-dimensional, spanned by $\{a_i,b_i,c_i,d_i\}$. When $i\ne j$, $\Hom_\cD(T_i,T_j)$ is 1-dimensional, spanned by 
$S_{ji}$, and $\Hom_{\cL}(L_i,L_j)$ is 2-dimensional, spanned by $\{p_{ji}, q_{ji}\}$.   It follows that $\cF^1$ is injective.  The map $\cF^1$ preserves the gradings   listed in Proposition \ref{easy},  Definition \ref{defpfB2},   and Proposition \ref{prop:gradings}.

Given that $\cF^n=0$ for $n\geq 2$, $\mu^r _{\cD} = 0$ for $r\neq 2$,  and $\mu^r _{\cL } = 0 $ except when $r=2,3$, the polynomial equations condition to verify that  $\cF$ is an $A_\infty$-functor, 
\begin{eqnarray*} 
\lefteqn{\sum_r \sum_{ s_1+\dots +s_r=d
} \mu^r _{\cL } \left( \cF^r(x_d, \dots, x_{d-s_r +1}), \dots, \cF^1 (x_{s_1}, \dots, x_1 ) \right) }\\
&=& \sum_{0\leq n <d, ~~1\leq m <d} \cF^{d-m+1}(x_d, \dots, x_{n+m+1}, \mu^m _{\cD} (x_{n+m}, \dots, x_{n+1}), x_n , \dots, x_1 ), \end{eqnarray*} 
 are trivial except for the $d=2,3$ cases, which reduce to 
  \begin{equation}\label{eq7.1.1}
 \mu^2 _{\cL} (\mathcal F^1 (x) , \mathcal F^1(y)) =  \mathcal F^1 (\mu^2 _{\cD} (x,y)),
\end{equation}
 and 
 \begin{equation}\label{eq7.1.2}\mu^3_{\cL}(\mathcal F^1 (x) , \mathcal F^1(y), \cF^1(z))=0
\end{equation}

 Equation (\ref{eq7.1.1}) merely says that $\cF, \cF^1$ is an ordinary functor on the underlying categories, which is easy to check  using Proposition \ref{easy} and Theorem \ref{maincalc}.
 Equation (\ref{eq7.1.2}) is also true, because Theorem \ref{maincalc}  shows that the 3-fold products $\mu^3_{\cL } (x,y,z)$ 
  vanish  whenever 
  $$x,y,z\in {\rm Image }~\cF^1={\rm Span}\{\cF^1(A_i),\cF^1(C_i),\cF^1(S_{ij}) \}={\rm Span}\{a_0,a_1,c_0,c_1,q_{10},p_{01} \}.$$  
 Finally, since $\cF^1(A_i)=a_i$ and $\cF^n=0$ for $n>1$, this $A_\infty$-functor is strictly unital.
\end{proof}

 Recall   the definitions of the {\em Khovanov merge and split maps}:
 \begin{equation}\label{mergesplit}
 \begin{split}
 &M:A\otimes A\to A, \ M(1\otimes 1)=1,\ M(1\otimes x)=x=M(x\otimes 1), \ M(x\otimes x)=0. \\
& S:A\to A\otimes A, \ S(1)=1\otimes x + x\otimes 1, \ s(x)=x\otimes x.
 \end{split}
 \end{equation}
 These have gradings $p_A(M)=-1 $ and $ p_A(S)= -1.$

   From the definitions (\ref{epeta}),  it is easy to check the following formulas:
 \begin{align}
\label{compsofeta}
\hskip1in \ep\eta=0=\dot\ep\dot\eta,~\ep\dot\eta={\rm Id}_\FF=\dot\ep\eta, ~\eta\dot\ep+\dot\eta\ep={\rm Id}_A \hskip.5in  \\
M=\dot\ep\otimes\dot\ep\otimes \eta+\ep\otimes\dot\ep\otimes\dot \eta
+\dot\ep\otimes\ep\otimes \dot\eta, ~~
S=
\dot\ep\otimes\eta\otimes \dot\eta+\ep\otimes\dot\eta\otimes \eta
+\ep\otimes\dot\eta\otimes \dot\eta\nonumber 
\end{align}

 \medskip

  It is convenient for calculations  to have explicit formulas for $\cF^1(C)$ for a set of generators for
  the set of morphisms 
 between any two 2-tangles.  Any dotted cobordism 
may be expressed, up to isotopy,  as a composition of 
 {\em elementary dotted cobordisms}, defined as either  cobordisms with no dots in $D^2 \times I$ whose projection to $I$ has one Morse critical point, or a product cobordism with  one dot on one component, or the trace of an isotopy. 
 Up to reordering the circle components and applying the same re-orderings to the factors in $A^{\otimes m}$ and $(A^*)^{\otimes n}$, the values of $\cF^1$ on the elementary dotted cobordisms are given in the following proposition.

 \begin{proposition}\label{elementary} The $A_\infty$-functor $\cF$ takes the following values on elementary cobordisms.  
 \begin{enumerate}
\item If $C$ denotes the  cobordism  with no dots,   obtained as the trace of an isotopy of a planar tangle,    $$\cF^1(C)=id_{A^{\otimes m}}\otimes a_\ell\in 
\Hom_\FF(A^{\otimes m},A^{\otimes m})\otimes (L_\ell,L_\ell).$$
\item If $C$ denotes the product cobordism   from $T_\ell(n)$ to itself with a dot on the earringed arc component,
 $$\cF^1(C)=0 \in \Hom_\FF(A^{\otimes m},A^{\otimes m})\otimes (L_\ell,L_\ell).$$
\item If $C$ denotes the product cobordism   from $T_\ell(n)$, with a dot on the un-earringed arc component,
 $$\cF^1(C)=
id_{A^{\otimes m}}\otimes  c_\ell\in \Hom_\FF(A^{\otimes m},A^{\otimes m})\otimes (L_\ell,L_\ell).$$
\item If $C$ denotes the product cobordism   from $T_\ell(n)$, with  a dot on the last circle component,
 $$\cF^1(C)=
 id_{A^{\otimes m-1}}\otimes  (\dot\ep\otimes\dot\eta)\otimes a_\ell\in \Hom_\FF(A^{\otimes m-1},A^{\otimes m-1})\otimes A^*\otimes A\otimes (L_\ell,L_\ell).$$
\item If $C$ denotes the cobordism  from $T_\ell(m)$ to  $T_\ell(m+1)$ with a single critical point of index 0, and the new circle is last, $$\cF^1(C)=
 id_{A^{\otimes m}}\otimes  \eta\otimes a_\ell\in \Hom_\FF(A^{\otimes m},A^{\otimes m})\otimes A \otimes (L_\ell,L_\ell).$$
 \item If $C$ denotes the cobordism  from $T_\ell(m+1)$ to  $T_\ell(m)$ with a single critical point of index 2, and the circle capped off is last, $$\cF^1(C)=
 id_{A^{\otimes m}}\otimes  \ep\otimes a_\ell\in \Hom_\FF(A^{\otimes m},A^{\otimes m})\otimes A^* \otimes (L_\ell,L_\ell).$$
 \item If $C$ denotes the cobordism  from $T_\ell(m)$ to  $T_\ell(m+1)$   with a single critical point of index 1, obtained by performing a band move with  both ends of the band attached to the earringed component,  with the new circle listed last,   
 $$\cF^1(C)=
 id_{A^{\otimes m}}\otimes  \dot\eta\otimes a_\ell\in \Hom_\FF(A^{\otimes m},A^{\otimes m})\otimes A \otimes (L_\ell,L_\ell).$$ 
  \item If $C$ denotes the cobordism  from $T_\ell(m+1)$ to  $T_\ell(m)$ with  a single critical point of index 1, obtained by performing a band move with one end  of the band attached to the earringed component, and the other on  the last   circle, $$\cF^1(C)=
 id_{A^{\otimes m}}\otimes  \dot\ep\otimes a_\ell\in \Hom_\FF(A^{\otimes m},A^{\otimes m})\otimes A^* \otimes (L_\ell,L_\ell).$$
\item If $C$ denotes the cobordism  from $T_\ell(m)$ to  $T_\ell(m+1)$   with a single critical point of index 1, obtained by performing a band move with both ends of the band attached to the unearringed component,   and the new circle listed last,   $$\cF^1(C)=
 id_{A^{\otimes m}}\otimes  (\dot\eta\otimes a_\ell+ \eta\otimes c_\ell)\in \Hom_\FF(A^{\otimes m},A^{\otimes m})\otimes A \otimes (L_\ell,L_\ell).$$
  
  \item If $C$ denotes the cobordism  from $T_\ell(m+1)$ to  $T_\ell(m)$ with a single critical point of index 1, obtained by performing a band move with one end  of the band attached to the unearringed component, and the other on  the last  circle,    $$\cF^1(C)=
 id_{A^{\otimes m}}\otimes  (\dot\ep\otimes a_\ell+ \ep \otimes c_\ell)\in \Hom_\FF(A^{\otimes m},A^{\otimes m})\otimes A^* \otimes (L_\ell,L_\ell).$$
  \item If $C$ denotes the cobordism  from $T_\ell(m+1)$ to  $T_\ell(m)$ with a single critical point of index 1, obtained by performing a band move with the ends  of the band attached to the last two circles, $$\cF^1(C)=
 id_{A^{\otimes m-1}}\otimes  M\otimes a_\ell\in \Hom_\FF(A^{\otimes m-1},A^{\otimes m-1})\otimes \Hom_\FF(A^{\otimes 2}, A) \otimes (L_\ell,L_\ell).$$

  \item  If $C$ denotes the cobordism  from $T_\ell(m)$ to  $T_\ell(m+1)$   with a single critical point of index 1, obtained by performing a band move with both ends  of the band attached to the last   circle, $$\cF^1(C)=
 id_{A^{\otimes m-1}}\otimes  S\otimes a_\ell\in \Hom_\FF(A^{\otimes m-1},A^{\otimes m-1})\otimes \Hom_\FF(A, A^{\otimes 2}) \otimes (L_\ell,L_\ell).$$

  \item  If $C$ denotes the cobordism  from $T_\ell(m)$ to  $T_{1-\ell}(m)$   with a single critical point of index 1, obtained by performing a band move with one end  of the band attached to the earringed component, and the other on the unearringed component, $$\cF^1(C)=
 id_{A^{\otimes m}}\otimes s\in \Hom_\FF(A^{\otimes m},A^{\otimes m})\otimes   (L_\ell,L_{1-\ell})$$
where $s=q_{10}$ or $p_{01}$, according to whether $T_\ell=T_0$ or $T_\ell=T_1$.
\end{enumerate}
\end{proposition}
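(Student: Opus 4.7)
The plan is to verify each of the thirteen cases by expressing the elementary cobordism in the basis of $\Hom_\cD$ provided by Proposition \ref{easy2} and then applying $\cF^1$ by linearity using its definition in Equation (\ref{F1}). The key decomposition tools are the neck-cutting relation from Figure \ref{dottedfig} and the additional relation (\ref{kill}) that kills any cobordism carrying a dot on the earringed arc.

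First, I would apply neck cutting to every \emph{passive} cylinder on an existing circle, rewriting each elementary cobordism in the ``base'' form (cap off inputs, apply arc cobordism, introduce outputs) from Proposition \ref{easy2}. Since an undotted cylinder becomes $\eta\dot\ep + \dot\eta\ep = \mathrm{id}_A$ in $A^*\otimes A$ and a dotted cylinder becomes $\dot\ep\otimes\dot\eta$ (i.e.\ multiplication by $x$), this contributes an $\mathrm{id}_{A^{\otimes m}}$ factor in every undotted circle slot and a $\dot\ep\otimes\dot\eta$ factor in each dotted slot. Cases (1)--(4) follow immediately from this, with case (2) vanishing because the base cobordism carries a dot on the earringed arc and is killed by (\ref{kill}). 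For the \emph{active} region near each critical point, index-$0$ and index-$2$ cups and caps contribute $\eta$ or $\ep$ in the appropriate $A$ or $A^*$ slot with no further work (cases (5) and (6)); index-$1$ saddles require neck cutting the resulting pair of pants and reading off the surviving basis elements.

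The main obstacle is the bookkeeping in cases (7)--(13), particularly (9)--(12), where several neck-cut terms of an active pair of pants may survive simultaneously. In cases (7), (8), and (13) exactly one term survives because relation (\ref{kill}) kills the ``dot on earringed arc'' option; in cases (9) and (10) the band lies on the unearringed arc and \emph{both} neck-cut terms survive, with the ``dot on unearringed arc'' term producing the $c_\ell$ summand via the $C_\ell$ base cobordism and the ``dot on the new/merged circle'' term producing the $a_\ell$ summand, combining to give the stated formulas $\dot\eta\otimes a_\ell + \eta\otimes c_\ell$ and $\dot\ep\otimes a_\ell + \ep\otimes c_\ell$. In cases (11) and (12), the neck cuts on the circle-to-circle pair of pants recombine with the passive-circle expansions to reproduce exactly the Khovanov merge and split maps $M, S$ given in Equation (\ref{compsofeta}). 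Once all decompositions are in hand, applying $\cF^1$ termwise via Equation (\ref{F1}) and collecting yields the stated formulas, and the grading assertions follow automatically since $\cF^1$ preserves grading.
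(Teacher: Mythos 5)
Your proposal matches the paper's proof: both neck‑cut each elementary cobordism into the basis of Proposition \ref{easy2}, discard the terms killed by relation (\ref{kill}) (those carrying a dot on the earringed component), and then apply $\cF^1$ termwise via Equation (\ref{F1}). The only slip is a citation --- the merge and split maps $M,S$ are defined in Equation (\ref{mergesplit}), while Equation (\ref{compsofeta}) gives their expansion in the $\ep,\dot\ep,\eta,\dot\eta$ basis --- but this is immaterial to the argument.
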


\begin{proof}
 These are proved using the relations of Figure \ref{dottedfig} and the additional relation (\ref{kill}) that any cobordism with a dot on the earringed component is equivalent to the zero morphism. Figure \ref{move12fig} gives the argument for (12).   The top line is the cobordism $C$ (with $m=1$ and $\ell=0$).  The bottom line equals $$\dot\ep\otimes\dot\eta\otimes\eta\otimes a_0+\dot\ep\otimes\eta\otimes\dot\eta\otimes a_0+\dot\ep\otimes\dot\eta\otimes\dot\eta\otimes a_0+0=S\otimes a_0$$
 where we refer to Equation (\ref{compsofeta}) for the last equality.
    \begin{figure} 
\begin{center}
\def\svgwidth{5.7in}
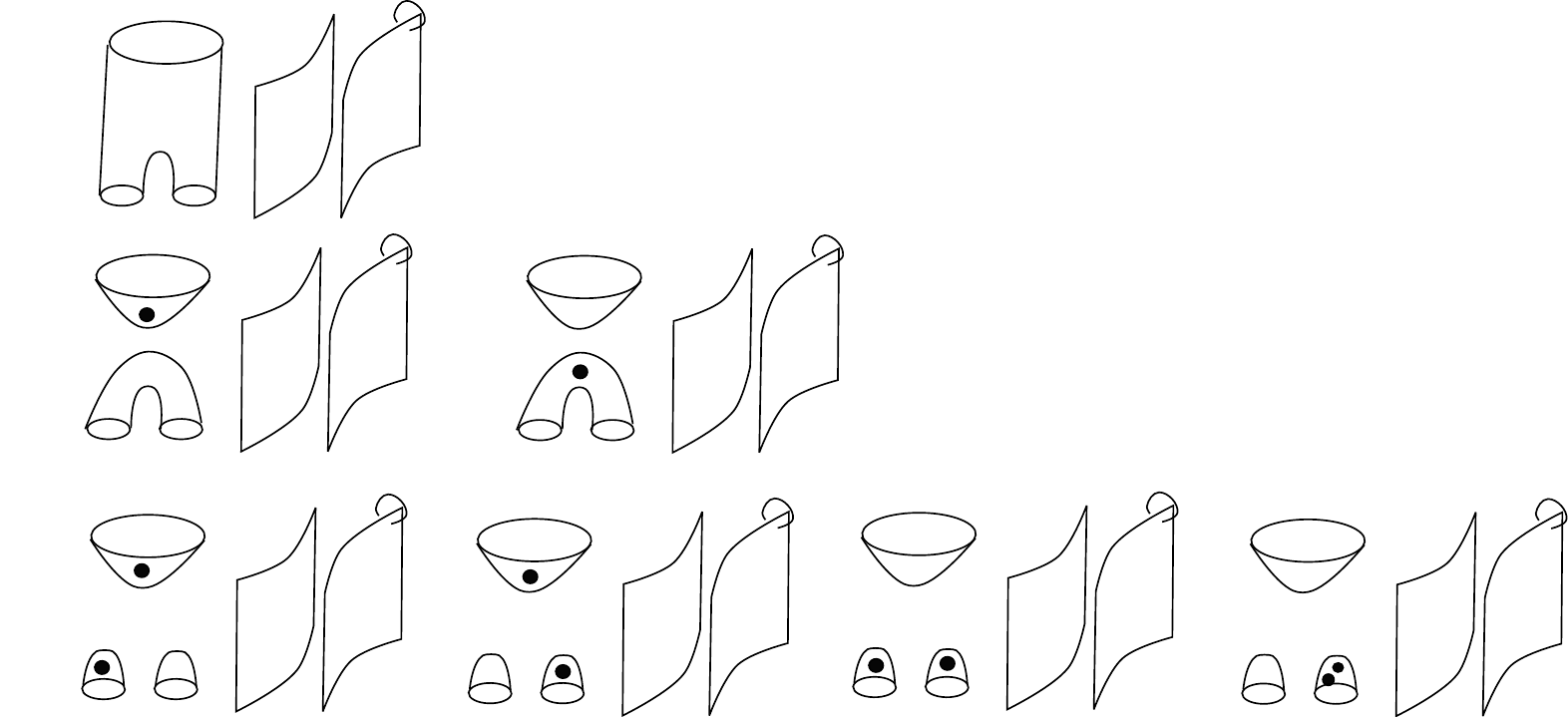
 \caption{\label{move12fig} }
\end{center}
\end{figure} 
Proofs for the  others cases are similar and are  left to the reader. 
 \end{proof}

\subsection{The $A_\infty$-functors $\cG_k$}\label{cGk}  We finish this section by constructing   strictly unital  $A_\infty$-functors
$\cG_k:\cL\to Ch$, $k=0,1$ using the curves $W_0,W_1$. 
Recall that $Ch$ is the dg-category of graded $\FF$  chain complexes.

Suppose that $W$ is an immersed curve in the pillowcase, transverse to $L_0$ and $L_1$, such that all interval components have endpoints mapping to the corners of $P$.

Then $W$ determines the $A_\infty$-module $\cM_W$ over $\cL$, with $$\cM_W(L_i)=(W,L_i), ~ i=0,1,$$   and structure maps
$$
\mu^n_\cL:
(L_{\ell_{n-1}}, L_{\ell_{n}})
 \otimes\dots\otimes(L_{\ell_1}, L_{\ell_2})\otimes \cM_{W}(L_{\ell_1})\to \cM_{W}(L_{\ell_n}).
$$
In particular  $\cM_W(L_i)=(W,L_i), ~ i=0,1$ is equipped with  the differential
$\mu^1_\cL:(W,L_i)\to (W,L_i)$.
Hence $$\big((W, L_i),\mu_\cL^1\big)\in \text{Obj}(Ch).$$

As outlined in Example \ref{exmorita}, $W$ determines the $A_\infty$-functor $$\cG_W:\cL\to Ch$$  by
 \begin{align} 
\label{Louis}
&\cG_W(L_\ell) = \big( (W,L_\ell),\mu^1_\cL\big), ~\ell=0,1 \\
&\cG_W^n : (L_{\ell_{n-1}}, L_{\ell_n})\otimes\dots\otimes  (L_{\ell_0},L_{\ell_1}) \to \Hom_{Ch}( (W,L_{\ell_0}), (W,L_{\ell_1})) \nonumber.  \\
&\cG_W^n(x_n,\dots, x_1)(y)=\mu^{n+1}_\cL(x_n,\dots, x_1,y) \nonumber
\end{align}

We mostly restrict our focus to the two cases $W=W_0$ or $W_1$, and will write $\cG_k,$ $k=0,1,$ rather than $\cG_{W_k}$ to streamline notation.    In these cases, the differentials $\mu^1_\cL:(W_k,L_\ell)\to (W_k,L_\ell)$ are zero, and so the  $\cG_k(L_\ell),$ $k=0,1,$ are chain complexes with trivial differential.

 Moreover, Theorem \ref{testthm} shows that $\cG_{k}^n=0$ for $n>2$, $k=0,1$. But $\cG_{k}^1$ and $\cG_{k}^2$ are non-zero. For example, the statement  $\mu^3(b_0,p_{01},\gamma)=\alpha$ of Theorem \ref{testthm} is equivalent to $\cG^2_{0}(b_0,p_{01}) (\gamma)=\alpha$. Similarly,  the formula $\mu^2(c_1,\rho)=\sigma$ shows that $\cG^1_1(c_1)$ is non-zero.  That $\cG_k$ is strictly unital also follows from the calculations of Theorem \ref{testthm}.

 \section{Three twisted complexes  associated to a tangle}\label{TwistedSister}
 
 We have introduced the category $\cD$, the $A_\infty$-category $\cL $ and its additive enlargement $\Sigma\cL$, two $A_\infty$-modules $\cM_{W_0}, \cM_{W_1}$ over $\cL$, and  the dg-categories $Ch$ and $\Tw Ch$.  We have also introduced  strictly unital  $A_\infty $-functors
 $$\cF:\cD\to \Sigma\cL ,~ \cG_k:\cL\to Ch,~k=0,1.
 $$
 
  As explained in Section \ref{category}, one can form the $A_\infty$-categories 
  $$\Tw \cD ,~\Tw \cL  ,~\Tw Ch.$$

     The $A_\infty$-functors $\cF$ and $\cG_k$ extend to strictly unital $A_\infty$-functors  \cite[Section I.3m]{Seidel}
 $$ \Tw \cF :\Tw\cD\to \Tw\cL ,~  \Tw\cG_k:\Tw\cL\to  \Tw Ch.
 $$

  \medskip

\subsection{Bigradings}\label{bigradingsect}
We pause momentarily to clarify notation and terminology we'll use for   bigrading  on endomorphisms  in $\Sigma \cA$ for those objects whose index set has $\ZZ$-grading.

First, if $V,W$ are $\ZZ$-graded vector spaces,   the usual  $\ZZ$-grading on $\Hom_\FF(V,W)$ is defined by
$$\Hom_\FF^r(V,W) =\bigoplus_{k\in \ZZ}\Hom_\FF(V^k,W^{k+r}).$$

Next, given an  $A_\infty$-category $\cA$ with grading $p_\cA$ on morphisms,   an object $A$ in $\Sigma\cA$ has   the form
 \begin{equation}
\label{objinS}
A=\bigoplus_{i\in I} V_i\otimes A_i
\end{equation}
where $A_i$ are objects in $\cA$,   the $V_i=\{V_i^k\}_{k\in \ZZ}$ are graded vector spaces, and $I$ is some finite indexing set. 
Then\begin{align*}\Hom_{\Sigma\cA}(A,A)&=
\bigoplus_{i\in I}\bigoplus_{j\in I}\Hom_\FF(V_i,V_j)\otimes \Hom_{\cA}(A_i,A_j),\\
&=
\bigoplus_{i\in I}\bigoplus_{j\in I}\left(\bigoplus_{r\in \ZZ}\Hom_\FF^r(V_i,V_j )\right)\otimes \Hom_{\cA}(A_i,A_j).
\end{align*}

Suppose now  that the indexing set $I$ is equipped with a function $$h:I\to \ZZ.$$  Then $\Hom_{\Sigma\cA}(A,A)$ can be bigraded by declaring the bigrading of  $\phi\otimes \psi\in \Hom_\FF^r(V_i,V_j)\otimes \Hom_{\cA}(A_i,A_j)$   to be
 \begin{equation}
\label{bigrading}
\operatorname{bg}(\phi\otimes\psi)=(r+p_\cA(\psi), h(j)-h(i)).
\end{equation}

By definition, if $(A,\delta)$ is an object in $\Tw\cA$, then $\delta\in \Hom_{\Sigma\cA}(A,A)$ has degree 1 with respect to the first component of this bigrading, but {\em a priori} $\delta$ need not be homogenous with respect to the second component of this bidegree.

In the special case when $\cA=Ch$, we can also define a  bigrading on those {\em objects} of $\Sigma(Ch)$ (and hence also the objects of $\Tw(Ch)$) whose indexing set $I$ is equipped with an $h:I\to \ZZ$.  Namely, with $A$ as in Equation (\ref{objinS}), 
any $a\in A$ can be expressed as $\bigoplus_{i\in I} \bigoplus_{r\in \ZZ}v_i^r\otimes a_i\in A$.
Define the bidegree of $v_i^r\otimes a_i\in V_i\otimes A_i$ to be
\begin{equation}
\label{bigradeobjects}
\operatorname{bd}(v_i^r\otimes a_i)=(r+ p_A(a_i), h(i)).
\end{equation}
In this special case the bigrading (\ref{bigradeobjects}) is compatible with  the bigrading (\ref{bigrading}) on morphisms.

\subsection{A twisted complex over $\cD$} Following  Bar-Natan \cite{BN}, we next associate a twisted complex $(D_{(T,o)},d_{(T,o)})\in \Tw\cD$ to an    non-planar 2-tangle $T$  with a fixed (generic) projection to the disk together  with an orientation $o$ on all tangle components. Let $n^-_{(T,o)}$ denote the number of negative crossings, and $n^+_{(T,o)}$ the number of positive crossings. We use $n^\pm$ as shorthand for $n^\pm_{(T,o)}$ when $(T,o)$ is understood. 
 
Suppose the projection of $T$ has $n=n^++n^-$ crossings, and order the crossings of $T$ arbitrarily.  Let $I$ be the set of $2^n$ sequences of zeros and ones, and for each $i\in I$, let $P_i$ be the {\em planar} 2-tangle  obtained by smoothing the crossings of $T$ as determined by $i$.

Define {\em the homological grading function} $$h:I\to \ZZ$$ by
\begin{equation}\label{defh}
h(i)=-n^-+ (\text{the number of ones in ~} i).
\end{equation}

Define a twisted complex over $\cD$ as follows.  First, define $D_{(T,o)} \in \Sigma \cD$ to be 
 \begin{equation}\label{twistcom}
D_{(T,o)}    =  \bigoplus_{i\in I} \FF\{n^- - n^+ -2 h(i)\} \otimes P_i.
\end{equation}
Equip $\Hom_{\Sigma \cD} (D_{(T,o)}, D_{(T,o)})$ with the bigrading described above.  

Given $i,j\in I$, say $j$ {\em covers } $i$ if $j$ is obtained from $i$ by changing a single zero to a one. 
Whenever $j$ covers $i$, note that  $h(i)=h(j)-1$. Assume  $j$ covers $i$ and define 
$R_{ji}$ to be the (obvious) saddle cobordism from $P_i$ to $P_j$. The Euler characteristic of $R_{ji}$ is $1$, and thus $p_\cD(R_{ji})=-2+1=-1$.

 We define
\begin{align*}d_{ji} :=id_\FF\otimes R_{ji}&\in \Hom_\FF(\FF  \{ n^- - n^+ -2 h(i)\} ,\FF \{ n^- - n^+ -2 h(j)\} )\otimes \Hom_\cD (P_i,P_j).\end{align*}
These grading shifts make an adjustment  so that the bigrading of this $d_{ji}$ is $$\operatorname{bg}(d_{ji}) = (1,1).$$ 
Whenever $j$ does not cover $i$, define $d_{ji}=0$.
Define \begin{equation}
\label{BNdiff}d_T:=  \bigoplus_{i,j\in I}d_{ji}\in \Hom_{\Sigma\cD} (D_{(T,o)},D_{(T,o)}).\end{equation}

 Filter $(D_{(T,o)},d_T)$ by taking the $k$th level $F_k(D_{(T,o)})$ to be 
 $$F_k(D_{(T,o)})=   \bigoplus_{i\in I,~ h(i)\geq k} \FF  \{  n^- - n^+-2h(i) \}  \otimes P_i.$$
Notice that $d_{ji}$ is zero when  $h(i)\ge h(j)$ so that $d_T$ is strictly lower triangular with respect to this filtration.

Our   bigrading convention is slightly different from  the usual $(q,h)$  bigrading on the Khovanov complex, which gives the differential   bidegree $(0,1)$.  Our choice is made because Seidel's definition of a twisted complex in \cite{Seidel} requires the differential in a twisted complex to have degree $1$, not $0$.  This explains why the shift in Equation (\ref{twistcom})  is $-2h(i)$, rather than $-h(i)$.  In brief, our bigrading corresponds to $(p+2h,h)=(q+h,h)$. See Theorem \ref{invariance2}.

\begin{lemma} $(D_{(T,o)},d_T)$ is a twisted complex over $\cD$.
 \end{lemma}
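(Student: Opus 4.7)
The plan is to verify the three conditions defining a twisted complex: (i) $d_T$ has degree $1$, (ii) $d_T$ is strictly lower triangular with respect to the filtration $F_k$, and (iii) $d_T$ satisfies the Maurer--Cartan equation $\sum_n \mu^n_{\Sigma\cD}(d_T,\dots,d_T)=0$.

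For (i), observe that the shifts in \eqref{twistcom} are calibrated precisely so that each nonzero summand $d_{ji}$ has bigrading $(1,1)$. Indeed, the identity map $\FF \{a\} \to \FF \{b\}$ has degree $a-b$ by the shift convention of Section~\ref{category}, so the $\FF$-factor of $d_{ji}$ contributes $2h(j)-2h(i) = 2$ to the first component, while $p_{\cD}(R_{ji}) = -2 + \chi(R_{ji}) = -1$ since $R_{ji}$ has Euler characteristic $1$ and no dots; summing gives $1$. The second component equals $h(j) - h(i) = 1$. Summing over all covering pairs $(i,j)$ thus produces a $d_T$ whose first component of bigrading is uniformly~$1$. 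Condition (ii) is immediate: $d_{ji} \ne 0$ only when $h(j) = h(i) + 1$, so $d_T$ increases the index $h$ strictly, making it strictly lower triangular with respect to the filtration $F_k$ (equivalently, strictly upper triangular with respect to $-h$; either way, the filtration requirement of Section~\ref{category} holds).

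For (iii), I will use the crucial fact that $\cD$ is a category, i.e.\ $\mu^n_{\cD}=0$ for $n \ne 2$, hence $\mu^n_{\Sigma \cD}=0$ for $n \ne 2$. The Maurer--Cartan equation therefore collapses to the single condition
\[
\mu^2_{\Sigma\cD}(d_T, d_T) = d_T \circ d_T = 0.
\]
Expanding, $d_T \circ d_T = \bigoplus_{k,i}\bigl(\sum_{j} d_{kj} \circ d_{ji}\bigr)$, and the $(k,i)$-summand is nonzero only when $h(k) - h(i) = 2$, in which case there are exactly two intermediate sequences $j_1, j_2$ that cover $i$ and are covered by $k$ (the two orders in which to resolve the pair of crossings at which $k$ and $i$ disagree).

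The main point, and the only step requiring genuine verification, is that the two compositions $d_{kj_1}\circ d_{j_1 i}$ and $d_{kj_2}\circ d_{j_2 i}$ represent the same morphism in $\cD$. Composing the two saddle cobordisms yields, in either order, the same abstract embedded cobordism in $D^2 \times I$: locally, away from the two crossings, it is the product cobordism, while near each crossing it is a saddle, and these local pieces commute up to isotopy rel boundary since they occur in disjoint cylinders. Neither composite carries any dots, so the additional relation \eqref{kill} plays no role. The grading-shift factors agree on both sides (each side has the same $\FF$-part $(1,1)+(1,1)=(2,2)$-shifted identity). Because we work over $\FF = \FF_2$, the two equal terms cancel, giving $\sum_j d_{kj}\circ d_{ji} = 0$ for every pair $(k,i)$. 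This establishes $d_T \circ d_T = 0$ and completes the proof. The hard part, such as it is, is the commuting-squares observation, but it is essentially tautological once one notes that saddle cobordisms at disjoint crossings are supported in disjoint sub-cylinders of $D^2 \times I$.
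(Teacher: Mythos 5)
Your proof is correct and follows essentially the same strategy as the paper: check strict lower triangularity and degree~1, note that $\mu^n_\cD = 0$ for $n \ne 2$ reduces the Maurer--Cartan equation to $d_T\circ d_T = 0$, and observe that each nonzero $(k,i)$-component splits into exactly two terms whose cobordisms are isotopic and therefore cancel mod~2. Your additional bookkeeping (explicitly computing the shifted degree $a-b=2$ so that $2+p_\cD(R_{ji})=1$, and noting that relation~\eqref{kill} is vacuous here since no dots appear) is correct and only makes the argument more explicit than the paper's version.
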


\begin{proof}
We noted above that
 $d_T$ is strictly lower triangular with respect to the filtration $F_k$,     as required in the definition of a twisted complex, and that $d_T$ has degree 1 with respect to the first component of the bigrading.

 To see that $(D_{(T,o)},d_T)$ is a twisted complex, it remains only to verify that $\mu^2_{\Sigma\cD}(d,d)=0$, since $\mu^n_\cD=0, n>2$.  The $(i,j)$ component of $\mu^2_{\Sigma\cD}(d,d)$ is the sum of $\mu^2_{\Sigma\cD}(d_{jk},d_{ki})$ over all those $k$ so that $j$ covers $k$ and $k$ covers $i$. Either this sum is empty, so that 
 $\mu^2_{\Sigma\cD}(d,d)_{ij}=0$, or else there are precisely two choices of $k$, say $k'$ and $k''$.  In this latter case  
\[\begin{split}\mu^2_{\Sigma\cD}(d,d)_{ij}&= (id_\FF\circ id_\FF)\otimes\mu^2_\cD(R_{ik'},R_{k'j})+(id_\FF\circ id_\FF)\otimes\mu^2_\cD(R_{ik''},R_{k''j})\\
 &=id_\FF\otimes(R_{ik'}\circ R_{k'j}+R_{ik''}\circ R_{k''j} ).\end{split}\]
 But the cobordisms $R_{ik'}\circ R_{k'j}$ and $R_{ik''}\circ R_{k''j}$ are isotopic, so  this equals zero as well. 
\end{proof} 

The complex $(D_{(T,o)},d_T)$ depends on the orientation only up to an overall shift by the negative writhe $n^--n^+$. We will sometimes streamline notation and write $(D_T,d_T)$ rather  than $(D_{(T,o)},d_T)$.

 \subsection{A twisted complex over $\cL$}\label{L-delta}  Define the twisted complex $(L_{(T,o)},\delta_T)$ to be the image of 
 $(D_{(T,o)}, d_T)$ via the functor $\Tw\cF:\Tw\cD\to \Tw\cL$
 \begin{equation}
(L_{(T,o)},\delta_T)=\Tw\cF(D_{(T,o)},d_T).
\end{equation}
We pause to unwind this definition, to set some notation, and  for the benefit of the reader.

For each $j\in I$, the complete resolution $P_j$ is a   planar  2-tangle, and hence $P_j$ is of type $T_\ell(m)$ for some $\ell\in \{0,1\}$ and non-negative integer $m$.  We use this to define two functions, $$\ell:I\to \{0,1\}, m:I\to \{0,1,2,\dots\}$$ by 
$$P_j=T_{\ell_j}(m_j).$$
  Thus $\ell(i)$ determines how the four boundary points are connected in the complete resolution $P_i$ and $m(i)$ counts the number of circle components of $P_i$.

Then  $L_{(T,o)} \in \Sigma\cL$  equals 
\begin{align*} L_{(T,o)} &=  \bigoplus_{i\in I} \FF \{n^- - n^+-2h(i)\} \otimes \cF(P_i)\\ & = \bigoplus_{i\in I} A^{\otimes m_i}\{n^- - n^+-2h(i)\}\otimes L_{\ell_i}.\end{align*}
Since $\cF^n=0$ for $n\geq  2$, $\delta_T=\cF^1(d_T)$ (\cite[Section I.3m]{Seidel}).    Hence, writing $\delta_{ji}=\cF^1(d_{ji})$,
$$\delta_T=\bigoplus_{j,i\in I}\delta_{ji}.$$

Since each $d_{ji}$ is a band move, that is, an elementary cobordism corresponding to an index 1 critical point, the formulas for $\delta_{ji}=\cF^1(d_{ji})$
are given in Cases  (8), (9), (10), (11), (12), and (13) of Proposition \ref{elementary}.  

   In summary, we associate to  a projection of a 2-tangle $T$, the following object in $\Tw \cL$.   
\begin{equation}
 \label{Lagcom}
 (L_{(T,o)}, \delta_T)=\left( \bigoplus_{i\in I} A^{\otimes m_i}\{ n^- - n^+-2h(i)\} \otimes L_{\ell_i}, \bigoplus_{i,j\in I} 
 \delta_{ji} \right).\end{equation}

This complex depends on the orientation of $T$ only up to an overall shift in the $\mathbb{Z}$-grading by $n^- - n^+$, the negative writhe of the tangle.   If we wished to neglect orientations, therefore, we could associate to an unoriented tangle a well-defined {\em relatively} $\mathbb{Z}$-graded twisted complex of $\mathbb{Z}$-graded  Lagrangians by removing the $n^- - n^+$ in the formula above.  

 We will sometimes streamline notation and  write $(L_T,\delta_T)$ rather than $ (L_{(T,o)}, \delta_T)$.

\subsection{A twisted complex over $Ch$}
 
Finally, for $k=0,1$  we apply the functor $\Tw\cG_k$ to $ (L_{[T,0]}, \delta_T)$ to obtain a twisted complex over the   dg-category $Ch$ of chain complexes:
 \begin{equation}
\label{Khovanovcube}
( K_{[(T,o),k]} , \partial_T)=\Tw\cG_k(L_{(T,o)} , \delta_T)=\Tw\cG_k\circ \Tw\cF(D_{(T,o)},d_T).
\end{equation}
 
  In the remainder of this section, we unravel this definition.
  Since $\cG_k(L_i)=(W_k,L_i)$, from Equations (\ref{Lagcom}) and  (\ref{Khovanovcube}) one sees that 
\begin{equation}
\label{redkho1}   K_{[(T,o),k]}= \bigoplus_{i\in I} A^{\otimes m_i }\{n^- -n^+ -2h(i)\} \otimes (W_k,L_{\ell_i})\in  \Obj(Ch)
\end{equation}
The vector space  $K_{[(T,o),k]}$ is bigraded by Equation (\ref{bigradeobjects}), using the grading function $h$ of Equation (\ref{defh}).

   {\em A priori}, the fact that $\cG_k^2$ is non-zero  (see the paragraph following Equation (\ref{Louis}))  allows for the possibility that $\partial_T$ is not simply  equal to  $\cG_k^1(\delta_T)$.  However,   the (challenging) calculations  of Theorem \ref{testthm} permit us to conclude that $\partial_T$ is in fact equal to $\cG_k^1(\delta_T)$.

\begin{theorem}\label{nohigherdif} For $k=0,1$, the differential $\partial_T$ is $$\partial_T=\cG_k^1(\delta_T)=\bigoplus_{i,j\in I} 
\cG^1_k (\delta_{ji}).$$
In particular, $\partial_T$ has well-defined bidegree $\operatorname{bg}(\partial_T)=(1,1)$, and hence the cohomology $H^*( K_{[(T,o),k]} , \partial_T)$ inherits a bigrading.
\end{theorem}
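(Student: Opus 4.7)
The plan is to unwind the definitions of $\Tw\cG_k$ and the dg-functor $\Tw Ch\to Ch$ of Example \ref{exofCh2}, and then show that all contributions to $\cG_k^n(\delta_T,\dots,\delta_T)$ for $n\ge 2$ vanish.  Applying $\Tw\cG_k$ to $(L_T,\delta_T)$ yields a twisted complex $(\cG_k(L_T),\tilde\delta_T)$ in $\Tw Ch$, where $\cG_k$ replaces each summand $A^{\otimes m_i}\{n^- -n^+-2h(i)\}\otimes L_{\ell_i}$ by $A^{\otimes m_i}\{n^- -n^+-2h(i)\}\otimes (W_k,L_{\ell_i})$, and $\tilde\delta_T=\sum_{n\ge 1}\cG_k^n(\delta_T,\dots,\delta_T)$.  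Composing with the dg-functor $\Tw Ch\to Ch$ produces $(K_{[(T,o),k]},\partial_T)$, where $\partial_T$ equals the internal differential on the additive object $\cG_k(L_T)$ plus $\tilde\delta_T$.  The internal differential vanishes:  the $A$-factors carry no differential, and Theorem \ref{testthm} lists no $\mu^1$-operations on $\cM_{W_k}(L_i)=(W_k,L_i)$.  Hence $\partial_T=\tilde\delta_T$, and it remains to analyze $\cG_k^n(\delta_T,\dots,\delta_T)$ term-by-term.

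I then reduce $\tilde\delta_T$ to its $n=1$ term.  For $n\ge 3$ the formula $\cG_k^n(x_n,\dots,x_1)(y)=\mu^{n+1}_\cL(x_n,\dots,x_1,y)$, together with the assertion in Theorem \ref{testthm} that $\mu^m$ on $\cM_{W_k}$ vanishes for $m\ge 4$, gives $\cG_k^n\equiv 0$.  The essential step is to verify $\cG_k^2(\delta_T,\delta_T)=0$, which reduces to showing $\mu^3_\cL(x_3,x_2,y)=0$ whenever $x_2,x_3$ are Lagrangian factors appearing in some pair of $\delta_{ji}$'s, and $y$ is a generator of $(W_k,L_\ell)$.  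By the explicit formula \eqref{F1} defining $\cF^1$, every Lagrangian factor appearing in any $\delta_{ji}$ is drawn from the six-element set $\{a_0,a_1,c_0,c_1,p_{01},q_{10}\}$.  A direct inspection of the non-zero $\mu^3$-operations on $\cM_{W_0}$ and $\cM_{W_1}$ recorded in Theorem \ref{testthm} reveals that every one of them has at least one of its inputs $x_2,x_3$ in the complementary six-element set $\{b_0,b_1,d_0,d_1,q_{01},p_{10}\}$.  Hence $\mu^3_\cL(x_3,x_2,y)=0$ whenever both Lagrangian inputs come from the image of $\cF^1$, and the $n=2$ term vanishes.

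The bidegree claim follows readily.  Each $d_{ji}$ has bidegree $(1,1)$ in $\Hom_{\Sigma\cD}(D_T,D_T)$ by the grading-shift conventions built into \eqref{twistcom}, and $\cF^1$ preserves the $h$-grading (since it is diagonal in the index $i$) and the morphism grading by Proposition \ref{prop:gradings}; so each $\delta_{ji}$ also has bidegree $(1,1)$.  The $n=1$ component $\cG_k^1$ of an $A_\infty$-functor has degree $0$ and commutes with the $h$-grading, so $\partial_T=\cG_k^1(\delta_T)$ is homogeneous of bidegree $(1,1)$, and $H^*(K_{[(T,o),k]},\partial_T)$ inherits the bigrading \eqref{bigradeobjects}.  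The main obstacle in the argument is the cancellation at $n=2$:  it is not formal, and it depends crucially on the restricted image of $\cF^1$ inside the generators of $A_\cL$ matching precisely the complement of the set of Lagrangian inputs that appear in the non-zero $\mu^3$-operations listed in Theorem \ref{testthm}.
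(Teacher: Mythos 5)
Your proof is correct and follows essentially the same line as the paper's: reduce $\partial_T$ to the sum $\cG_k^1(\delta_T)+\cG_k^2(\delta_T,\delta_T)$, observe that every Lagrangian factor appearing in $\delta_T=\cF^1(d_T)$ lies in the six-element set $\{a_0,a_1,c_0,c_1,p_{01},q_{10}\}$, and check against Theorem \ref{testthm} that every non-zero $\mu^3$ on $\cM_{W_k}$ involves a Lagrangian input from the complementary set, so $\cG_k^2(\delta_T,\delta_T)=0$. Your write-up is slightly more explicit in unpacking the dg-functor $\Tw Ch\to Ch$ and in treating the $n\ge 3$ terms, but the underlying argument, including the decisive inspection of the $\mu^3$ table, matches the paper's.
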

\begin{proof}
 The formula for $\partial_T=\Tw(\cG_k)(\delta_T)$ can be found in \cite[Section I.3m]{Seidel}. 	Since $\cG_k^n=0$ for $n>2$ (see Section \ref{cGk}), the result is 
$$\partial_T=\cG_k^1(\delta_T)+ \cG_k^2(\delta_T,\delta_T).$$

We claim that  $\cG_k^2(\delta_T,\delta_T)=0$.  To begin with, $$\delta_T=\bigoplus_{i,j\in I}\delta_{ji}=\bigoplus_{i,j\in I}\cF^1(d_{ji})
$$
where $d_{ij}=id_\FF\otimes R_{ij}$, with   $R_{ij}$  a band move, if $j$ covers $i$, and $d_{ij}$ is zero otherwise. There are seven cases to consider, corresponding to the possibilities for index 1 critical points,  which depend on which components in the tangle the band meets.   The formulas for $\delta_{ji}=\cF^1(d_{ji})$
are given in Cases  (7), (8), (9), (10), (11), (12), and (13) of Proposition \ref{elementary}.  Applying $\cG_k ^2$ to any pair of these involves evaluating $\mu^3(x,y,z)$ where $x,y\in \{a_0,a_1, c_0,c_1, p_{01}, q_{10}\}$ and $ z\in \{\alpha,\beta,\gamma,\tau, \rho,\sigma\},$ and a glance at the eight   non-zero $\mu^3$ in Theorem \ref{testthm} shows that such a $\mu^3(x,y,z)$ is always zero. 

Hence $\partial_T=\cG_k^1(\delta_T)$.  The claims about the bidegree and the bigrading follow immediately. 
\end{proof}

\section{Main results}\label{mainresults}
 
\subsection{Homotopy equivalence}

In this section we apply Bar-Natan's homotopy invariance theorem \cite{BN} to show that the homotopy types of  the twisted complexes $(D_{(T,o)},d_T), (L_{(T,o)},\delta_T)$, $( K_{[(T,o),0]},\partial_T)$, and $( K_{[(T,o),1]},\partial_T)$  depend only on the isotopy class of the oriented 2-tangle $(T,o)$.

\medskip

Let $\cA$ be a strictly unital $A_\infty$-category, so that $\Tw\cA$ is also a strictly unital $A_\infty$-category.  Suppose that $(X,\delta )$, $(X',\delta')$ are two  twisted complexes over $\cA$.
Call a morphism $f\in \Hom_{\Tw \cA}((X,\delta),(X',\delta'))$ a {\em   homotopy equivalence} provided there exists a morphism $g\in\Hom_{\Tw \cA}((X',\delta'),(X,\delta))$, as well as morphisms $H\in  \Hom_{\Tw \cA}((X,\delta),(X,\delta))$ and $K\in  \Hom_{\Tw \cA}((X',\delta'),(X',\delta'))$
satisfying\begin{enumerate}
\item $\mu^1_{\Tw\cA}(f)=0, \mu^1_{\Tw\cA}(g)=0$.
\item $\mu^2_{\Tw\cA}(g, f)+e_{(X,\delta)}=\mu^1_{\Tw\cA}(H)$, $\mu^2_{\Tw\cA}(f,g)+e_{(X',\delta')}=\mu^1_{\Tw\cA}(K)$.
 
\end{enumerate}

We omit the routine verification of the following lemma.
\begin{lemma}\label{check}  Homotopy equivalence is an equivalence relation.   Moreover, if $\cF:\cA\to \cB$ is a strictly unital $A_\infty$-functor, the induced strictly unital $A_\infty$-functor $\Tw\cF:\Tw \cA\to \Tw\cB$ takes  homotopy equivalences to  homotopy equivalences.\qed
 \end{lemma}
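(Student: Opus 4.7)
The plan is to verify the three axioms of an equivalence relation and then check functoriality separately. For reflexivity, I would choose $f=g=e_{(X,\delta)}$ and $H=K=0$: strict unitality of $\Tw\cA$ gives $\mu^1_{\Tw\cA}(e_{(X,\delta)})=0$ and $\mu^2_{\Tw\cA}(e,e)=e$, and the two sides of condition~(2) agree over $\FF=\FF_2$. Symmetry is automatic from the formal symmetry of the defining conditions in the pairs $(f,g)$ and $(H,K)$.

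The substantive step is transitivity. Given homotopy equivalences $(X,\delta)\simeq (X',\delta')$ with data $(f,g,H,K)$ and $(X',\delta')\simeq (X'',\delta'')$ with data $(f',g',H',K')$, I would take as the candidate composite equivalence $F:=\mu^2_{\Tw\cA}(f',f)$ and $G:=\mu^2_{\Tw\cA}(g,g')$. Cocycle-ness of $F,G$ follows from the second $A_\infty$-relation in $\Tw\cA$, which since $\mu^1(f)=\mu^1(f')=\mu^1(g)=\mu^1(g')=0$ reduces to the Leibniz identity $\mu^1(\mu^2(\cdot,\cdot))=0$. To produce the homotopies witnessing $\mu^2(G,F)+e_{(X,\delta)}=\mu^1(\tilde H)$, I would combine $H$, $H'$ and the $\mu^3$-term $\mu^3(g,g',\mu^2(f',f))$; the required identity then drops out of the pentagon ($d=3$) $A_\infty$-relation applied to $(g,g',f',f)$ together with the two given homotopies. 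An analogous formula produces $\tilde K$. The strict lower-triangularity in the definition of a twisted complex ensures all of these sums are finite.

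For the second assertion, I would use the $A_\infty$-functor axioms for $\Tw\cF$. The $d=1$ relation gives $\mu^1_{\Tw\cB}\circ(\Tw\cF)^1=(\Tw\cF)^1\circ \mu^1_{\Tw\cA}$, so cocycles are sent to cocycles. Strict unitality gives $(\Tw\cF)(e_{(X,\delta)})=e_{\cF(X,\delta)}$. Applied to the cocycle pair $(g,f)$, the $d=2$ $A_\infty$-relation for $\Tw\cF$ simplifies to
\[\mu^2_{\Tw\cB}((\Tw\cF)^1(g),(\Tw\cF)^1(f))=(\Tw\cF)^1(\mu^2_{\Tw\cA}(g,f))+\mu^1_{\Tw\cB}((\Tw\cF)^2(g,f)).\]
Adding $e_{\cF(X,\delta)}=(\Tw\cF)^1(e_{(X,\delta)})$ to both sides and substituting $\mu^2_{\Tw\cA}(g,f)+e_{(X,\delta)}=\mu^1_{\Tw\cA}(H)$ yields
\[\mu^2_{\Tw\cB}((\Tw\cF)^1(g),(\Tw\cF)^1(f))+e_{\cF(X,\delta)}=\mu^1_{\Tw\cB}\!\bigl((\Tw\cF)^1(H)+(\Tw\cF)^2(g,f)\bigr),\]
so $\tilde H:=(\Tw\cF)^1(H)+(\Tw\cF)^2(g,f)$, and symmetrically $\tilde K$, supplies the homotopies witnessing that $(\Tw\cF)^1(f)$ is a homotopy equivalence.

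The main obstacle I expect is the bookkeeping in the transitivity step: writing out the relevant pentagon $A_\infty$-relation in full and chasing all of the terms through, while keeping track of which vanish by $\mu^1(f)=\mu^1(g)=0$ and which are absorbed into the new homotopy. Everything else is essentially formal manipulation of the $A_\infty$-axioms and strict unitality.
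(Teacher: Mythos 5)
The paper states this lemma without proof, calling it a ``routine verification,'' so there is no in-text argument to compare against. Your plan is essentially correct: reflexivity ($f=g=e$, $H=K=0$), symmetry (swap the data), transitivity via $F=\mu^2(f',f)$, $G=\mu^2(g,g')$, and the functoriality computation are all sound, and the derivation of $\tilde H=(\Tw\cF)^1(H)+(\Tw\cF)^2(g,f)$ from the $d=2$ functor relation and strict unitality is exactly right. Two comments. First, the invocation in the transitivity step of ``the pentagon ($d=3$) $A_\infty$-relation applied to $(g,g',f',f)$'' is imprecise: the pentagon is the $d=4$ relation (four inputs), and neither the $d=3$ nor the $d=4$ relation applied once yields $\mu^2(\mu^2(g,g'),\mu^2(f',f))$ on the nose. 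What you actually need is the $d=3$ relation applied twice --- once to $(g,g',F)$ to move $\mu^2$ past $G$ (this is where the $\mu^3(g,g',F)$ correction you anticipate comes from) and once to $(g',f',f)$ to rewrite $\mu^2(g',F)$ --- after which substituting $\mu^2(g',f')=e+\mu^1(H')$, $\mu^2(g,f)=e+\mu^1(H)$, and the Leibniz rule $\mu^2(g,\mu^1(\xi))=\mu^1(\mu^2(g,\xi))$ produces an explicit homotopy such as $\tilde H = H + \mu^3(g,g',F) + \mu^2(g,\mu^3(g',f',f)) + \mu^2(g,\mu^2(H',f))$. Second, the ``routine'' argument the authors likely have in mind is shorter and conceptual: conditions (1) and (2) say precisely that $[f]$ is an isomorphism in the ordinary category $H^0(\Tw\cA)$ with inverse $[g]$; ``isomorphic'' is an equivalence relation in any category, and $H^0(\Tw\cF)$ is an ordinary functor, so it preserves isomorphisms. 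Your hands-on argument buys explicit formulas for the homotopies at the cost of bookkeeping, while the $H^0$ argument dispenses with that bookkeeping but forgets the explicit witnesses.
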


 Bar-Natan's invariance theorem
\cite[Theorem 1]{BN} says  that the   homotopy class of $(D_{(T,o)},d_T)$ depends only on the isotopy class of the tangle $(T,o)$.

\begin{corollary}\label{invariance} The  homotopy classes of the twisted complexes $(L_{(T,o)},\delta_T)$, $( K_{[(T,o),0]},\partial_T)$,   and   $(K_{[(T,o),1]},\partial_T)$ are invariants of the isotopy class of the oriented 2-tangle $(T,o)$. \qed
\end{corollary}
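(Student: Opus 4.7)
The proof will be a short diagram chase, since essentially all the work has been done elsewhere. The plan is to combine Bar-Natan's invariance theorem for $(D_{(T,o)}, d_T)$ with the general principle, recorded in Lemma \ref{check}, that strictly unital $A_\infty$-functors preserve homotopy equivalence upon passage to twisted complexes, then apply this to the two functors $\cF$ and $\cG_k$ that produce our invariants.

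First I would quote Bar-Natan's invariance theorem \cite[Theorem 1]{BN}: if $(T,o)$ and $(T',o')$ are isotopic oriented 2-tangles with chosen diagrams, then $(D_{(T,o)}, d_T)$ and $(D_{(T',o')}, d_{T'})$ are homotopy equivalent in $\Tw \cD$ in the sense recalled just before Lemma \ref{check}. Concretely, Bar-Natan produces explicit chain maps, homotopies, and inverses built out of cobordisms realizing the Reidemeister-type tangle moves, and these witness homotopy equivalence in $\Tw \cD$ (since $\cD$ is a strictly unital dg-category, the conditions for homotopy equivalence reduce to the usual chain-level ones).

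Next I would apply Lemma \ref{check} to the strictly unital $A_\infty$-functor $\cF \colon \cD \to \Sigma\cL$ constructed in Section \ref{functors}, whose extension to twisted complexes is the strictly unital $A_\infty$-functor $\Tw \cF \colon \Tw \cD \to \Tw \cL$. Since, by construction,
\[
(L_{(T,o)},\delta_T) \;=\; \Tw\cF\bigl(D_{(T,o)},d_T\bigr),
\]
Lemma \ref{check} yields that $(L_{(T,o)},\delta_T)$ and $(L_{(T',o')},\delta_{T'})$ are homotopy equivalent in $\Tw \cL$. Iterating this step with the strictly unital $A_\infty$-functors $\cG_k \colon \cL \to Ch$ from Section \ref{cGk} and their extensions $\Tw \cG_k \colon \Tw \cL \to \Tw Ch$, and using the identity
\[
(K_{[(T,o),k]}, \partial_T) \;=\; \Tw\cG_k \circ \Tw\cF\bigl(D_{(T,o)},d_T\bigr)
\]
of Equation (\ref{Khovanovcube}), gives the desired homotopy equivalence of the $K$-complexes.

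There is essentially no obstacle to finish, since the only non-formal input -- Bar-Natan's invariance theorem -- is already in the literature, and the transitivity plus functoriality properties of homotopy equivalence are precisely the content of Lemma \ref{check}. The one technical point worth flagging is that to quote Lemma \ref{check} one needs $\cF$ and $\cG_k$ to be \emph{strictly} unital, but this was verified in the constructions of Section \ref{functors}; in particular, strict unitality of $\cG_k$ follows from the observation (made after Equation (\ref{Louis})) that Theorem \ref{testthm} contains no $\mu^n$ involving the units $a_0,a_1$ as inputs for $n\ge 2$.
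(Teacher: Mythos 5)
Your proof is correct and follows precisely the route the paper intends: cite Bar-Natan's invariance theorem for $(D_{(T,o)},d_T)$ in $\Tw\cD$, then push the homotopy equivalence forward through $\Tw\cF$ and $\Tw\cG_k$ using Lemma \ref{check}. The paper itself leaves this corollary as immediate (\qed\ with no written argument), so your write-up simply makes explicit what the authors left implicit.
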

 
\subsection{Closures of a tangle}
A  2-tangle $T$  can be {\em closed} to produce a knot or link by adding two arcs outside $D^2\times I$ which connect the four boundary points in pairs.  There are two planar choices up to isotopy. Explicitly:

\begin{enumerate}
\item Let $ [T,0]$ denote the {\em 0-closure of  $T$}, that is, the link projection obtained by joining 
the points $-\bbi$ and $1$ by the arc $e^{\bbi\theta},~ \theta\in [-\tfrac \pi 2, 0]$  and the points $\bbi$ and $-1$ by  the arc $e^{\bbi\theta},~ \theta\in [\tfrac \pi 2 ,\pi ]$.
\item Let $ [T,1]$ denote the {\em 1-closure of $T$}, that is, the link projection obtained by joining 
the points $1$ and $\bbi$ by the arc $e^{\bbi\theta},~ \theta\in [0, \tfrac \pi 2]$  and the points $-1$ and $-\bbi$  by  the arc $e^{\bbi\theta},~ \theta\in [\pi, \tfrac {3\pi} 2]$.
\end{enumerate}

For example, referring to Figure \ref{tripleballfig}, $[T_0, 1]$ and $[T_1, 0]$ are unknots, and $[ T_0, 0]$ and $[T_1,1]$ are two component unlinks.

Given an oriented 2-tangle $(T,o)$, the orientation is always compatible with at least one of these closures, but not necessarily both.  When an orientation $o$ on $T$ extends to an orientation on the closure $[ T, k]$, we denote the latter orientation by $\hat o$.  

\medskip

\subsection{Identification of $K_{[(T,o),k]}$ with the reduced Khovanov complex}  
We next establish that, if the orientation $o$ on $T$ extends to  an orientation $\hat o$ on  $[T,k]$,  the twisted complex $(K_{[(T,o),k]},\partial_T)$ is isomorphic to the reduced Khovanov complex of the projection of $([T,k], \hat o)$, which we denote by $C\Kh^{\operatorname{red}}(([T,k], \hat o))$.  Recall (\cite{Kh}) that $C\Kh^{\operatorname{red}}(([T,k], \hat o))$ is equipped with a $\ZZ\oplus \ZZ$ bigrading.

\begin{theorem}\label{invariance2} Assume $(T,o)$ is a tangle with orientation $o$ which extends to  an orientation $\hat o$ on the closure $[T,k]$.  Assume that $\lambda$ is a line field in the isotopy class specified by Equation (\ref{defoflam}) and  $L_0,L_1,W_0,$ and $W_1$ have been $\ZZ$-graded so that the conclusion of Proposition \ref{prop:gradings} holds.

 Then there is a bigraded chain isomorphism between the bigraded chain complex  $( K_{[(T,o),k]},\partial_T)$ and the  reduced Khovanov complex $ C\Kh ^{\operatorname{red}} ([ T,k], \hat o)$ of the closure link projection $[ T,k]$, with the  bigrading $(q+h,h)$.   Hence  
$$H^{r,s}(K_{[(T,o),k]},\partial_T)\cong {\rm Kh}^{\operatorname{red}} ([T,k], \hat o)^{r-s,s}$$
where ${\rm Kh}^{\operatorname{red}} ([T,k], \hat o)$ denotes the reduced Khovanov cohomology of the closure $[T,k]$ of the tangle $T$.
 \end{theorem}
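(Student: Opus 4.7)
The plan is to construct an explicit bigraded chain isomorphism $\Phi : K_{[(T,o),k]} \to C\Kh^{\operatorname{red}}([T,k], \hat{o})$ summand-by-summand over the cube of resolutions $I = \{0,1\}^n$. The key combinatorial observation is that the closure $[T_\ell(m), k]$ has $m+2$ circles when $\ell = k$ (the two arcs of $T_\ell$ close up into two separate circles) and $m + 1$ circles when $\ell \neq k$ (the arcs merge into a single circle); in either case exactly one circle contains the basepoint, the earringed circle. Hence the reduced Khovanov chain group at vertex $i$, namely $A^{\otimes (c_i - 1)} \otimes R$ with $R = \langle x \rangle \subset A$ enforcing that the earringed circle carry $x$, has dimension $2^{m_i + 1}$ when $\ell_i = k$ and $2^{m_i}$ when $\ell_i \neq k$. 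This matches our summand $A^{\otimes m_i}\{n^- - n^+ - 2h(i)\} \otimes (W_k, L_{\ell_i})$ in each case. I would define $\Phi$ as the identity on $A^{\otimes m_i}$ tensored with the bijection $\alpha, \beta \mapsto 1\otimes x, x\otimes x$ and $\rho, \sigma \mapsto 1\otimes x, x\otimes x$ when $\ell_i = k$, and $\gamma, \tau \mapsto x$ when $\ell_i \neq k$.

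For the bigrading, Proposition \ref{prop:gradings} yields $p_\cL(z) = p_A(\Phi(z))$ on each generator $z$. Combining this with the convention of Equation \ref{bigradeobjects} and the formula $h(i) = -n^- + (\text{number of ones in } i)$, the first bigrading coordinate at a basis element $a_1 \otimes \cdots \otimes a_{m_i} \otimes z$ simplifies to $\sum_j p_A(a_j) + p_\cL(z) + n^+ - n^- + 2h(i)$, and a direct manipulation using $(\text{number of ones in } i) = h(i) + n^-$ shows this equals the standard Khovanov $q + h$ at the image. The second coordinate equals $h(i)$ on both sides by definition.

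To check that $\Phi$ intertwines differentials, note that $\partial_T = \bigoplus \cG_k^1(\delta_{ji})$ by Theorem \ref{nohigherdif}. For each cube edge $i \to j$, the saddle $R_{ji}$ is an elementary band move in $P_i$, and Proposition \ref{elementary}(7)--(13) supplies $\cF^1(R_{ji})$ in each of the six topologically distinct band positions. For each case, I would identify the topological effect on $[P_i, k]$ --- a merge or split of specific circles, depending further on whether $\ell_i = k$ or $\ell_i \neq k$ --- and compare to $\cG_k^1(\cF^1(R_{ji}))$ computed via Theorem \ref{testthm}. For instance: Case (7) gives $(a_1, \ldots, a_m) \otimes z \mapsto (a_1, \ldots, a_m, x) \otimes z$, reproducing the reduced split $x \mapsto x \otimes x$ on the earringed circle. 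Case (9), with $\cF^1 = \mathrm{id} \otimes (\dot\eta \otimes a_\ell + \eta \otimes c_\ell)$, uses the essential products $\mu^2(c_0, \alpha) = \beta$ and $\mu^2(c_1, \rho) = \sigma$ to realize the split $S(1) = 1 \otimes x + x \otimes 1$ on the non-earringed arc circle when $\ell_i = k$, while reducing to a split of the earringed circle when $\ell_i \neq k$. Case (13) uses $\mu^2(q_{10}, \alpha) = \gamma$, $\mu^2(q_{10}, \beta) = 0$, and $\mu^2(p_{01}, \gamma) = \beta$ to realize respectively the reduced merge $M(x \otimes \bullet)$ and the reduced split $S(x) = x \otimes x$, again according to the sub-case.

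The main obstacle is the twelve-sub-case bookkeeping of the last step: six band positions crossed with two closure-resolution regimes, where the specific vanishing pattern of Theorem \ref{testthm} --- the asymmetry between the $b_i, c_i, d_i$ actions, the restricted non-zero $\mu^2$'s involving $p_{ij}, q_{ij}$, and the crucial role of the $c_\ell$-correction in Case (9) --- must align exactly with the reduced Khovanov merge and split maps. The reason all twelve sub-cases succeed is structural: the relation \ref{kill} used to pass from $\widetilde{\cD}$ to $\cD$, the form of the functor $\cF$, and the grading choices of Proposition \ref{prop:gradings} were engineered precisely so that $\Phi$ is a bigraded chain isomorphism; but the individual verifications still require careful diagrammatic case analysis. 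Once all edges are handled, assembling the vertex-wise identifications yields the desired global bigraded chain isomorphism.
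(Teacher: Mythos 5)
Your proposal is correct and follows essentially the same approach as the paper's proof: identify the circle-count correspondence (with $\ell_i = k$ giving $m_i+2$ circles in the closure and $\ell_i\neq k$ giving $m_i+1$), build $\Phi$ vertex-by-vertex from the graded identifications $\alpha\mapsto 1\otimes x$, $\beta\mapsto x\otimes x$, $\gamma\mapsto x$ (and their $k=1$ analogues), confirm the bigradings agree using Proposition \ref{prop:gradings}, and then verify edge-by-edge that $\cG_k^1(\delta_{ji})$ matches the reduced Khovanov merge/split via Theorem \ref{nohigherdif}, Proposition \ref{elementary}, and Theorem \ref{testthm}. The paper illustrates the differential check with case (10) while you illustrate with (7), (9), and (13), but the structure, the key lemmas invoked, and the conclusion are the same.
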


\begin{proof} For simplicity, we will assume $k=0$; the proof for $k=1$ is similar.
Throughout this proof, $(T,o)$ is fixed, so we use the streamlined notation
$$C\Kh=C\Kh([T,0],\hat o),~ K=K_{[(T,o),0]}.$$

The diagram for the closure of $[T,o]$ has the same crossings as $T$, in particular $n^\pm([T,o])=n^\pm(T)$.  Recall that for $i\in I$, $m_i$ denotes the number of circles in the complete resolution $T_{\ell_i}(m_i)$ of $T$ corresponding to $i\in I$. Let $m_i'$ denote the number of circles in the complete resolution of the closure $[T,o]$, and, for convenience, define $\ep_i=m_i'-m_i-1$. Then 
\begin{equation}\label{emi} m_i' =m_i+1+\ep_i=\begin{cases} m_i+2&\text{ if }  T_{\ell_i}=T_0\\  m_i+1& \text{ if } T_{\ell_i}=T_1.
\end{cases}\end{equation}

With $A=\FF[x]/(x^2=0), p_A:A\to\ZZ$, and $R=\langle x\rangle\subset A$ as in Section \ref{circlecomp}, the Khovanov complex has chain groups
$$C\Kh=\bigoplus_{i\in I} A^{\otimes m_i'}\{ n^--n^+-h(i)\}$$
with the  $(q,h)$ bigrading given as
$$C\Kh^{q,h}=\bigoplus_{i\in I, h(i)=h} \{z\in A^{\otimes m_i'}~|~ p_A(z)+h(i) + n^+-n^-=q\}.$$

The reduced Khovanov chain groups are defined as the kernel of multiplication by $x$ in the tensor factor of $A^{\otimes  m_i'}$ corresponding to the earringed component, which, for convenience we assume is placed last in the tensor product. Thus 
\begin{equation}\label{kho1} C\Kh^{\operatorname{red}}=\bigoplus_{i\in I} (A^{\otimes (m_i'-1)}\otimes R)\{ n^--n^+-h(i)\}\end{equation}
with the induced bigrading.

Using (\ref{emi}) and the fact that $(V\otimes W)\{s\}= V\{s\} \otimes W$ for graded vector spaces, we can rewrite (\ref{kho1}) as
\begin{equation}\label{kho2}
C\Kh^{\operatorname{red}}=\bigoplus_{i\in I} A^{\otimes m_i}\{ n^--n^+-h(i)\}\otimes (A^{\otimes\ep_i}\otimes R)
\end{equation}
(where $A^{\otimes 1}=A$ and $A^{\otimes 0}=\FF$).

We compare this to $K$. Repeating Equation (\ref{redkho1})
\begin{equation}
\label{redkho2}   K= \bigoplus_{i\in I} A^{\otimes m_i }\{n^- -n^+ -2h(i)\} \otimes (W_0,L_{\ell_i}).\end{equation}
The grading shifts in (\ref{kho2}) and (\ref{redkho2}) differ by $-h(i)$. 

Now 
\begin{equation}
\label{sm1}
A^{\ep_i}\otimes R=\begin{cases}\langle x\rangle&\text{ if } \ep_i=0\\
\langle 1\otimes x, x\otimes x\rangle&\text{ if } \ep_i=1
\end{cases}
\end{equation}
and 
\begin{equation}
\label{sm2}
(W_0,L_{\ell_i})=\begin{cases}\langle \gamma\rangle&\text{ if } \ep_i=0\\
\langle \alpha,\beta \rangle&\text{ if } \ep_i=1
\end{cases}
\end{equation}
Proposition \ref{prop:gradings} asserts that with the chosen line field $\lambda$ and gradings of $L_i, W_k$, 
$$p_\cL(\gamma)=-1=p_A(x), ~p_\cL(\alpha)=0=p_A(1\otimes x),\text{ and } 
p_\cL(\beta)=-2=p_A(x\otimes x).$$
Thus there are graded isomorphisms 
$$\Phi_i:(W_0,L_{\ell_i})\to A^{\ep_i}\otimes R, ~ \Phi_0(\gamma)=x,~ \Phi_1(\alpha)=1\otimes x, ~ \Phi_1(\beta)=x\otimes x$$
inducing an isomorphism of the chain groups 
\begin{equation}
\Phi:K\to C\Kh^{\operatorname{red}} 
\end{equation}
with $\Phi(K^{r,s})=(C\Kh^{\operatorname{red}})^{r-s,s}$.
\medskip

Proposition \ref{elementary}   and Theorem \ref{nohigherdif} imply that the differentials match. 
Indeed, for merges and splits to circles internal to the tangle $T$, this follows from parts (11) and (12) of Proposition \ref{elementary}.  More interesting are merges or splits corresponding to saddle cobordisms which involve one of the arcs of the tangle, which  correspond to parts (7), (8), (9), (10), and (13).  
  
  We describe in detail the case corresponding to (10); the others are similar (and easier).    This case corresponds to the cobordism $C: T_{\ell_i}(m_i) \to 
  T_{\ell_j}(m_j)$ obtained by splitting off a circle from the unearringed tangle component.  Then, reindexing if needed,  $j$ is obtained from $i$ by replacing the penultimate entry in $i$, which is $0$, by $1$. In particular, $\ell_j=\ell_i\in \{0,1\}$, and $m_j=m_i+1$.
 to streamline notation write $\ell=\ell_j=\ell_i$ and $m=m_i$ so that $m_j=m+1$.
  
 The morphism  
\begin{align*}\delta_{ji}\in \Hom_{\Sigma\cL}(\cF(T_\ell(m)),\cF(T_\ell(m+1)))&=\Hom_{\Sigma\cL}(A^{\otimes m}\otimes L_\ell, A^{\otimes (m+1)}\otimes L_\ell)\\
&=\Hom(A^{\otimes m},A^{\otimes m} \otimes A ) \otimes \Hom_\cL(L_\ell,L_\ell) \end{align*}   
 is given by 
 $$\delta_{ji}=id_{A^{\otimes m}}\otimes (\dot\eta\otimes a_\ell+\eta\otimes c_\ell)= (id_{A^{\otimes m}}\otimes \dot\eta) \otimes a_\ell+(id_{A^{\otimes m}}\otimes\eta)\otimes c_\ell.$$
 Applying $\cG_0$ (and suppressing the $A^{\otimes m}$ factors corresponding to the  circles in $\hat T$ for clarity) one gets, if
 $\ell=0$, the diagram
  \[
\begin{diagram}
\node{(W_0,L_0)}\arrow{e,t}{\Phi_0}\arrow{s,l}{\cG_0  ^1(\delta_{ji})=\mu^2(\dot\eta\otimes a_0+ \eta\otimes c_0,-)}\node{ A\otimes R}\arrow{s,r}{S\otimes id_R}\\ 
\node{A\otimes (W_0,L_0)} \arrow{e,b}{id_{A} \otimes \Phi_0}\node{A\otimes(A\otimes R)}
\end{diagram}\]
which is readily seen to commute, using Theorem \ref{testthm} and Equation (\ref{epeta}).  
Similarly, if $\ell=1$ one obtains
   \[
\begin{diagram}
\node{(W_0,L_1)}\arrow{e,t}{\Phi_1}\arrow{s,l}{\cG_0  ^1(\delta_{ji})=\mu^2(\dot\eta\otimes a_1+ \eta\otimes c_1,-)}\node{ R}\arrow{s,r}{S }\\ 
\node{A\otimes (W_0,L_1)} \arrow{e,b}{id_A \otimes \Phi_1}\node{ A\otimes R}
\end{diagram}\]
 The right vertical maps are those that Khovanov assigns to the corresponding split  in the reduced Khovanov complex.
Cases (7), (8), (9), and (13) are checked similarly.

  Since the components $\cG^k_0$ are zero for $k>1$ by  Theorem \ref{nohigherdif}, 
there are no longer differentials in $(K,\partial_T)$, (nor in the reduced Khovanov complex), and so it follows that $\Phi$ is a chain isomorphism.  This completes the proof for the $k=0$ case;  a similar argument handles the $k=1$ case to finish the proof of Theorem \ref{invariance2}.  \end{proof}

\medskip

 Theorem \ref{invariance2} and the basic fact \cite{Kh} that the Jones polynomial is the bigraded Euler characteristic of the Khovanov complex have the consequence that one can recover the Jones polynomial of the closures $[(T,o),k]$ from the twisted complex $(L_{(T,o)},\delta_T)\in \Tw\cL$.

\section{Calculation of an example}\label{example}

Figure \ref{trefoilexamplefig} illustrates an oriented 2-tangle $T$, with an earring is placed near $1$.
   \begin{figure} [h]
\begin{center}
\includegraphics[width=.3\textwidth]{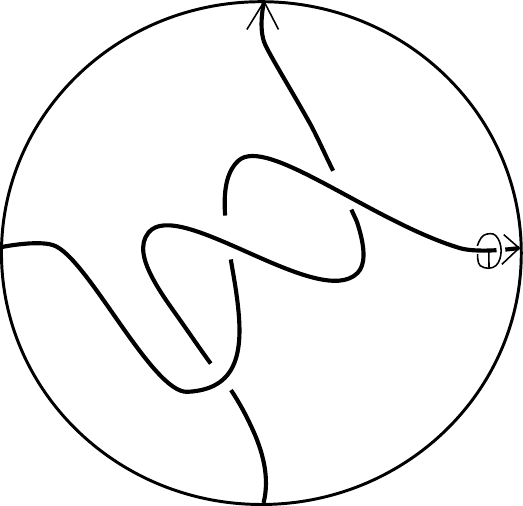}
 \caption{\label{trefoilexamplefig} The tangle $T$ }
\end{center}
\end{figure}

Its Bar-Natan cube of resolutions in $\Tw \cD$  is indicated in Figure     \ref{cubetreffig}. The labellings of arrows correspond to  Cases (7), (8), (9), (10), (11), (12), or (13) of Proposition \ref{elementary} (restated for convenience below), the seven possible band moves.

   \begin{figure} [h]
\begin{center}
\def\svgwidth{6in}
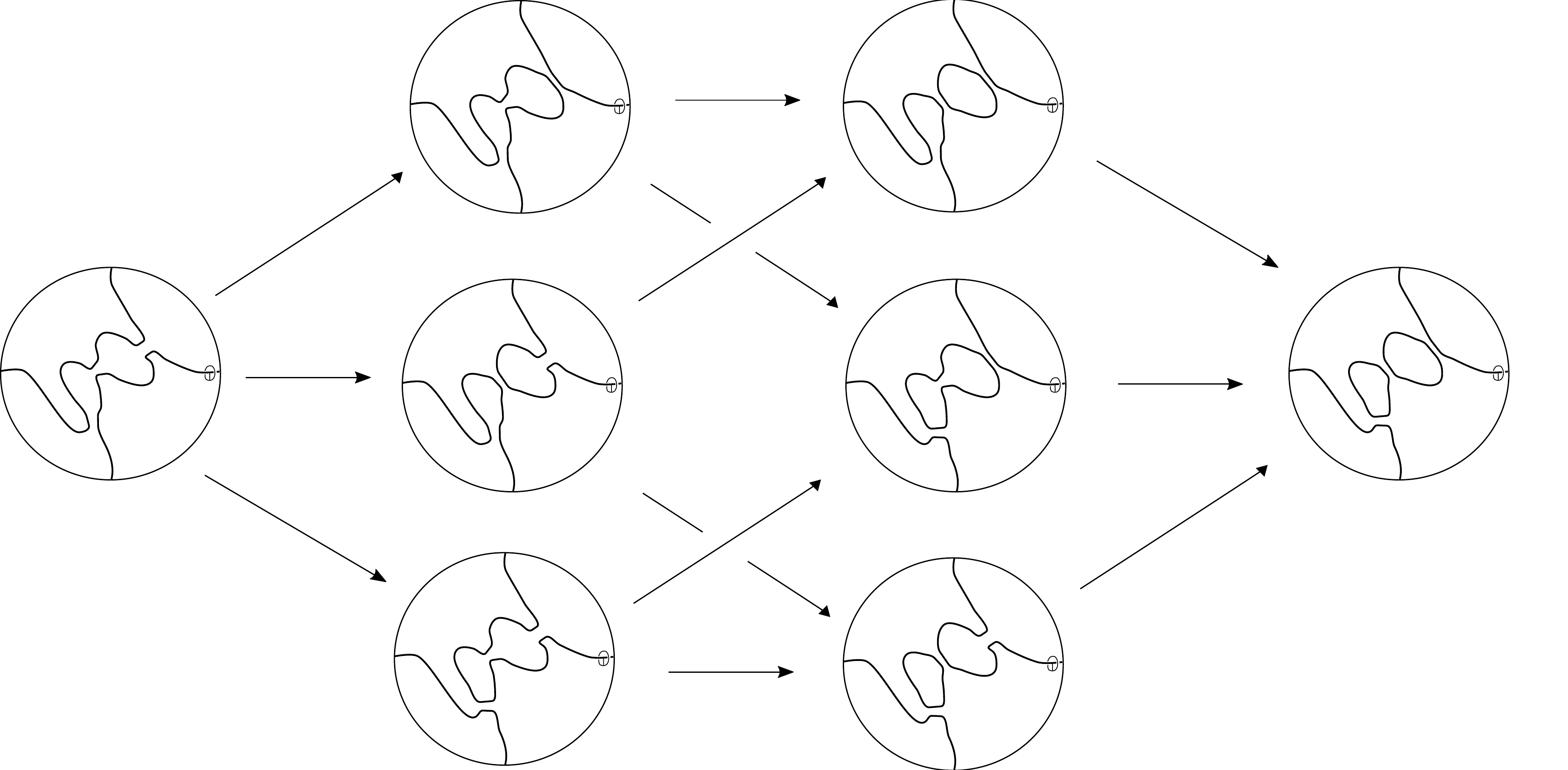
 \caption{\label{cubetreffig} The Bar-Natan twisted complex $D_{T,o}\in \Tw\cD$}
\end{center}
\end{figure}

The $A_\infty$-functor $\cF$ sends the tangle $T$ to the twisted complex $(L_{T}, \delta_T)\in \Tw \cL$ illustrated in Figure \ref{trefdiagfig}.

\begin{figure}
\begin{tikzcd}[column sep=1.5cm, row sep=huge]
&  \FF\{ -5\} \otimes L_1 \arrow[r, "\dot \eta \otimes a_1 + \eta \otimes c_1" ]   \arrow[dr, "\dot \eta \otimes a_1 + \eta \otimes c_1" , pos=0.8]
 &A\{ -7\} \otimes L_1 \arrow[dr, "id_A\otimes (\dot \eta \otimes a_1 + \eta \otimes c_1)" ] \\
\FF\{-3\} \otimes L_0 \arrow[ur, " q_{10}" ] \arrow[r, "q_{10}" , swap]  \arrow[dr, "q_{10}", swap] &  \FF\{ -5\} \otimes L_1\arrow[ur, crossing over
, "\dot \eta \otimes a_1" , pos=0.2
]  &  A\{ -7\} \otimes L_1  \arrow[r, "S\otimes a_1"]  & A^{\otimes 2} \{-9\}\otimes L_1\\
 &  \FF\{ -5\} \otimes L_1 \arrow[ur, , "\dot \eta \otimes a_1" , pos=0.2] \arrow[r, "\dot \eta \otimes a_1", swap] & A\{ -7\} \otimes L_1 \arrow[ur, swap,  "id_A \otimes (\dot \eta \otimes a_1)" ] \arrow[from=ul, crossing over, "\dot \eta \otimes a_1 + \eta \otimes c_1", pos=0.8]
\end{tikzcd}
\caption{The twisted complex $L_{(T,o)}$ in $  \Tw\cL$.\label{trefdiagfig} }
\end{figure}
To write down this complex, one first computes $n^-=0, n^+=3$. This leads to the resulting grading shifts at each vertex.   

 The  complex of Figure \ref{cubetreffig} has its morphisms labeled by elementary cobordisms of types (7), (9), (12), and (13). Their image under $\cF^1$ are given in Proposition \ref{elementary}, and restated below  in slightly abbreviated form. (For general tangles the morphisms might also include (8), (10), and (12).)
 \begin{enumerate}

 \item[(7)] If $C$ denotes the band move  from $T_\ell(m)$ to  $T_\ell(m+1)$ with  both ends of the band attached to the earringed component, $$\cF^1(C)=
 id_{A^{\otimes m}}\otimes  \dot\eta\otimes a_\ell.$$ 
\item[(9)] If $C$ denotes the band move  from $T_\ell(m)$ to  $T_\ell(m+1)$  with both ends of the band attached to the unearringed component,   and the new circle listed last,   $$\cF^1(C)=
 id_{A^{\otimes m}}\otimes  (\dot\eta\otimes a_\ell+ \eta\otimes c_\ell) .$$
  

\item[(12)] If $C$ denotes the band move  from $T_\ell(m)$ to  $T_\ell(m+1)$   obtained by performing a band move with both ends  of the band attached to the last   circle, $$\cF^1(C)=
 id_{A^{\otimes m-1}}\otimes  S\otimes a_\ell .$$

\item[(13)]  If $C$ denotes the band move  from $T_\ell(m)$ to  $T_{1-\ell}(m)$   with one end  of the band attached to the earringed component, and the other on the unearringed component, $$\cF^1(C)=
 id_{A^{\otimes m}}\otimes s $$
where $s=q_{10}$ or $p_{01}$, according to whether $T_\ell=T_0$ or $T_\ell=T_1$.
\end{enumerate}

\medskip

   The 0-closure $[T,0]$ of $T$ is the trefoil knot.  The 
$A_\infty$-functor $\Tw \cG_0$ takes $(L_T,\delta_T)$ to the chain complex $(K_{[(T,o),0]},\partial_T)$  obtained by pairing with $W_0$
$$(K_{[(T,o),0]},\partial_T)=\Tw \cG_0(L_T,\delta_T)= \big(W_0,  (L_T,\delta_T)  \big).$$
The vector spaces $(W_i, L_j)$ are given  in Equation (\ref{fukgens2}).

\hspace{-0.5in}

 \begin{figure}[h]
 \begin{center}
 
\begin{tikzcd}[column sep=1.1cm, row sep=huge]
&  (W_0, L_1)\{ -5\} \arrow[r, "\phi" ]   \arrow[dr, "\phi" , pos=0.8]
 &A \otimes (W_0, L_1) \{ -7\}\arrow[dr, "id_A\otimes \phi" ] \\
  (W_0, L_0)\{-3\} \arrow[ur, " \mbox{$\mu^2(q_{10}, -)$}" ] \arrow[r, " \mbox{$\mu^2(q_{10}, -)$} " , swap]  \arrow[dr, " \mbox{$\mu^2(q_{10}, -)$}", swap] &  (W_0,  L_1) \{ -5\} \arrow[ur, crossing over, "\mbox{$\dot \eta \otimes \mu^2(a_1,-)$}" , pos=0.2
]  &  A \otimes (W_0, L_1)\{ -7\}  \arrow[r, " \mbox{$S\otimes \mu^2(a_1,-)$}"]  & A^{\otimes 2}\otimes (W_0, L_1) \{-9\}\\
 &  (W_0, L_1)\{ -5\}  \arrow[ur, , "\mbox{$\dot \eta \otimes \mu^2(a_1,-)$}" , pos=0.2] \arrow[r, "\mbox{$\dot \eta \otimes \mu^2(a_1,-)$}", swap] & A\otimes (W_0, L_1)\{ -7\} \arrow[ur, swap,  "\mbox{$id_A \otimes (\dot \eta \otimes \mu^2(a_1,-))$}" ] \arrow[from=ul, crossing over, "\phi", pos=0.8]
\end{tikzcd}
\end{center}
\caption{\label{diagram2} The twisted complex $K_{[(T,o),0]}$ in $\Tw\cL$.}
\end{figure}
In Figure \ref{diagram2},  $\phi=\dot \eta \otimes \mu^2(a_1, -) + \eta \otimes \mu^2(c_1, -)$.
The calculations of $\mu^2$ in Theorem \ref{testthm} identify the complex of Figure \ref{diagram2} with the reduced Khovanov complex of the trefoil knot, illustrating the statement of Theorem \ref{invariance2}.

\medskip

In this manner, a 2-tangle $T$ determines the object $L_T\in \Tw \cL$,  where $\cL$ denotes the  Fukaya category with objects  the two figure eights $L_0, L_1$ in $P^*$, the pillowcase with its corners removed.

The twisted complex $L_T$  can be paired with any 
unobstructed, graded immersed curve $W$ in $P^*$. This includes both immersed unions of circles as well as immersed unobstructed arcs  with each endpoint mapping a corner,  such as $W_0$.  
More formally, from a tangle $T$ one obtains, via pairing,  an
$A_\infty$-functor from the (triangulated envelope of the) wrapped Fukaya category of (immersed, unobstructed, graded curves in) the pillowcase to chain complexes: $$\cG_T:\Tw {\rm WrFuk}(P^*)\to Ch.$$
In the above example, applying $\cG_T$ to $W_0\in {\rm WrFuk}(P^*)$ yields the reduced Khovanov complex of the trefoil. 

\medskip

In the articles \cite{HHK1, HHK2} a different construction of a compact Lagrangian in the pillowcase associated to a tangle is proposed, namely,  the perturbed traceless perturbed-flat moduli space of the tangle equipped with an earring, denoted $R^\nat_\pi(T)$, and its restriction to the boundary (see Appendix \ref{tracelesssect} for some discussion).  

\bigskip

Returning to the specific tangle $T$ of Figure  \ref{trefoilexamplefig},  Theorem 7.1 of \cite{HHK1} identifies $R^\nat_\pi(T)$  and its image in the  pillowcase   as the immersed circle illustrated in Figure \ref{pillowtanglefig}. In this case, no perturbation $\pi$ is needed, so $R^\nat_\pi(T)=R^\nat(T)$.
  \begin{figure} [h]
\begin{center}
\def\svgwidth{2.2in}
\begingroup%
  \makeatletter%
  \providecommand\color[2][]{%
    \errmessage{(Inkscape) Color is used for the text in Inkscape, but the package 'color.sty' is not loaded}%
    \renewcommand\color[2][]{}%
  }%
  \providecommand\transparent[1]{%
    \errmessage{(Inkscape) Transparency is used (non-zero) for the text in Inkscape, but the package 'transparent.sty' is not loaded}%
    \renewcommand\transparent[1]{}%
  }%
  \providecommand\rotatebox[2]{#2}%
  \newcommand*\fsize{\dimexpr\f@size pt\relax}%
  \newcommand*\lineheight[1]{\fontsize{\fsize}{#1\fsize}\selectfont}%
  \ifx\svgwidth\undefined%
    \setlength{\unitlength}{428.93450303bp}%
    \ifx\svgscale\undefined%
      \relax%
    \else%
      \setlength{\unitlength}{\unitlength * \real{\svgscale}}%
    \fi%
  \else%
    \setlength{\unitlength}{\svgwidth}%
  \fi%
  \global\let\svgwidth\undefined%
  \global\let\svgscale\undefined%
  \makeatother%
  \begin{picture}(1,0.91782179)%
    \lineheight{1}%
    \setlength\tabcolsep{0pt}%
    \put(0,0){\includegraphics[width=\unitlength,page=1]{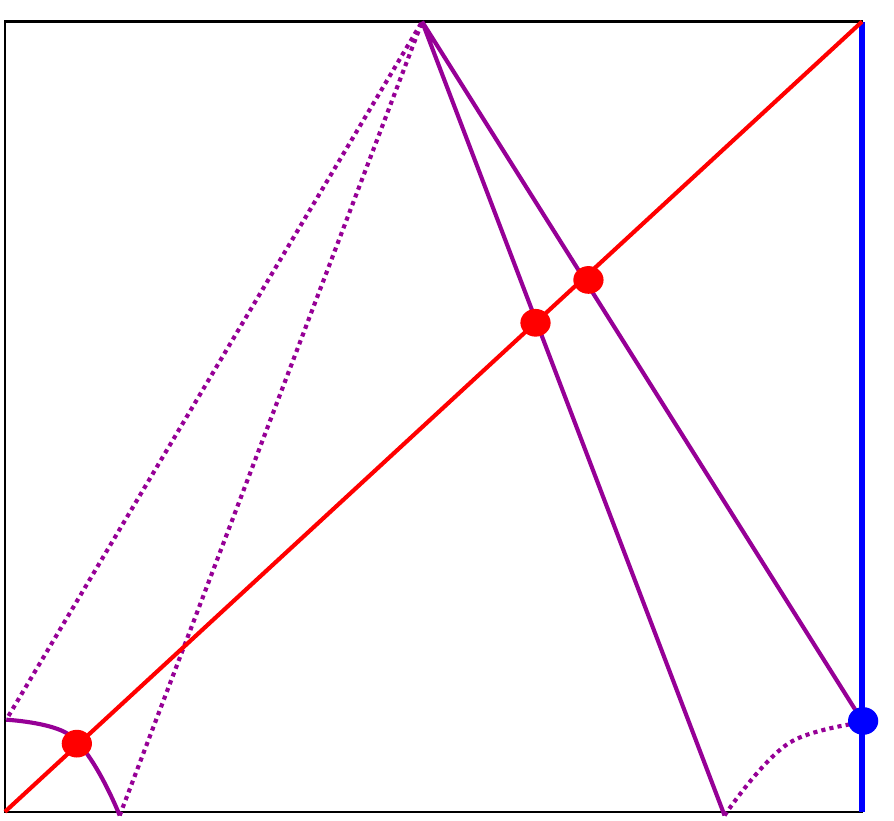}}%
    \put(0.71452133,0.78527573){\color[rgb]{0,0,0}\makebox(0,0)[lt]{\lineheight{1.25}\smash{\begin{tabular}[t]{l}$\color{red} W_0$\end{tabular}}}}%
    \put(0.84501133,0.45242309){\color[rgb]{0,0,0}\makebox(0,0)[lt]{\lineheight{1.25}\smash{\begin{tabular}[t]{l}$\color{blue} W_1$\end{tabular}}}}%
    \put(0.52791153,0.10232493){\color[rgb]{0,0,0}\makebox(0,0)[lt]{\lineheight{1.25}\smash{\begin{tabular}[t]{l}$\color{violet} R^\nat_\pi(T)$\\\\\end{tabular}}}}%
  \end{picture}%
\endgroup%

 \caption{\label{pillowtanglefig} The traceless character variety $R^\nat(T)$ of the tangle $T$.}
\end{center}
\end{figure}

Note that the Lagrangian Floer complex $CF(R^\nat(T), W_0)$ is generated by the three intersection points indicated in Figure \ref{pillowtanglefig}. All differentials are zero, as there are no bigons joining these points.  Its homology has rank 3, equal to the rank of the reduced Khovanov  homology (and the singular instanton homology) of the trefoil knot.   Similarly, the Lagrangian Floer complex $CF(R^\nat(T), W_1)$ is generated by the one indicated intersection point and so  its homology has rank 1, equal to the rank of the reduced Khovanov  homology (and the singular instanton homology) of the unknot, which is the 1-closure of $T$. 

In fact, recent work of Kotelskiy, Watson, and Zibrowius \cite{KWZ} shows that $L_T$ and $R^\nat(T)$ for the tangle of Figure  \ref{trefoilexamplefig}   (and many other tangles)   represent quasi-isomorphic objects in $\Tw {\rm WrFuk}(P^*)$, and hence pairing them with {\em any} unobstructed curve $W$ yields isomorphic Lagrangian Floer homologies.

 \color{black}
 
\appendix

\section{Theorems \ref{maincalc} and \ref{testthm}}\label{APa}

In this appendix we indicate how to prove Theorems \ref{maincalc} and \ref{testthm}.  

\medskip
First,   the $\mu^k$  for $k=1,2,3$ can be calculated by  straightforward pencil-and-paper work. For example, the following figures identify certain   $\mu^k$. The remaining ones are found by patient examination. In these pictures the pillowcase is stereographically projected to the plane, with the corner point $(0,\pi)$ sent to $\infty$.

\begin{figure} [H]
\begin{center}
\def\svgwidth{1.7in}
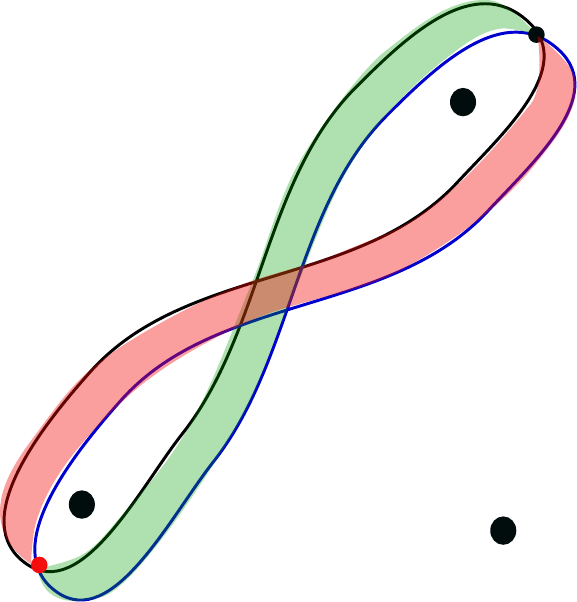
 \caption{ $\mu^1(a_0)=2d_0=0$ }
\end{center}
\end{figure}

\begin{figure}[H]
\begin{center}
\def\svgwidth{2.5in}
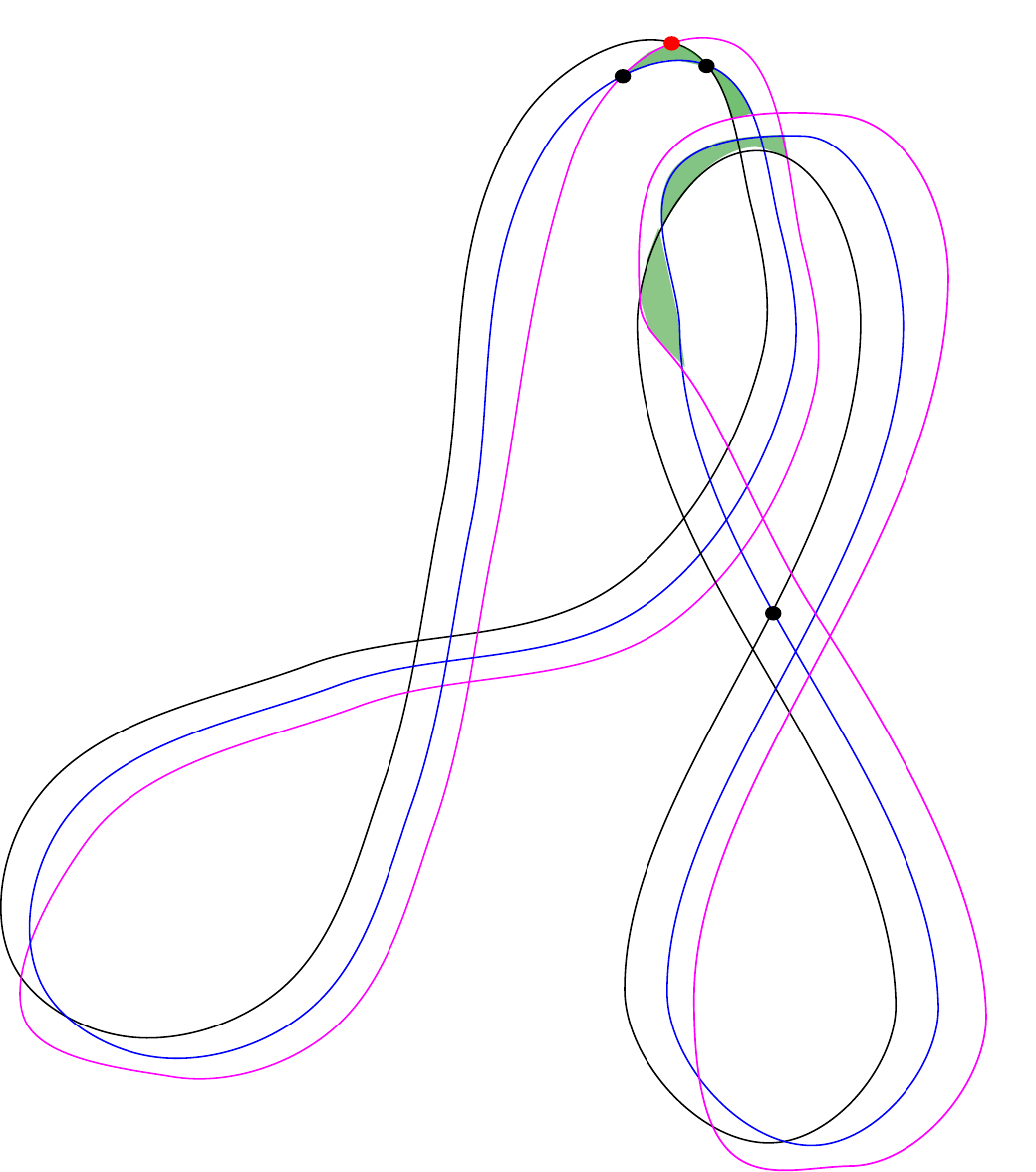
 \caption{  }
\end{center}
\end{figure}

\begin{figure}[h]
\begin{center}
\def\svgwidth{2.4in}
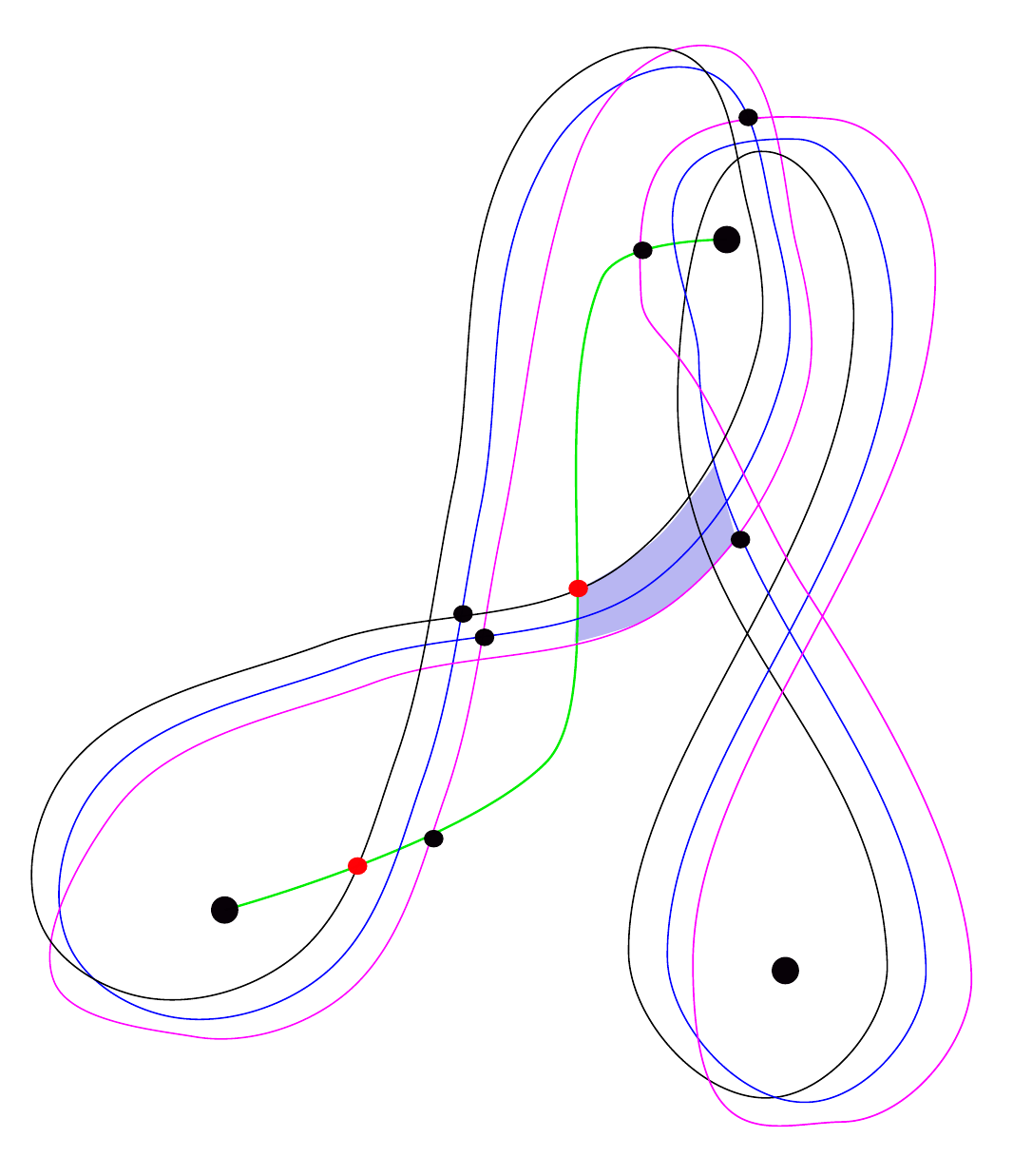
 \caption{  }
\end{center}
\end{figure}

We next outline the argument  that the only immersed convex polygons in the pillowcase giving rise to non-trivial $\mu$ maps are those listed in Theorem \ref{maincalc}.   

If $\mu^n(x_1,\dots, x_n)$ is non-zero, then there exists an orientation preserving immersed convex $(n+1)$-gon in the pillowcase whose edges, cyclically ordered,  lie on successive  pushoffs of the $L_i$. In particular restricting the immersion to a  subdisk with smooth boundary of the polygon yields a smoothly immersed disk.  The problem of determining whether a smoothly immersed curve in $\RR^2$  is the boundary of a smooth orientation preserving immersion of the disk is an unwieldy problem in general. One constraint is that the Gauss map on the boundary circle have degree 1.    There are algorithms \cite{blank} to determine whether a given curve is the boundary of an immersed disk, but such are not suited to our problem, since the problem of listing of all  the  curves that {\em might be} the boundary of an immersed polygon is itself complicated. 
 
A useful technique in determining whether a curve can be the boundary of an immersed disk is to focus on monogons, as we explain in the following definition and proposition.
 
\begin{definition}Let $f:D^2\to S^2$ be an orientation preserving immersion such that the restriction of $f$ to the boundary circle is in general position. A {\it monogon} is an closed arc $A\subset S^1=\partial D^2$ so that the restriction of $f$ to $A$ embeds the interior of $A$ and maps the endpoints of $A$ to the same (transverse double) point in $S^2$.

The double point of a monogon has a well-defined self-intersection number in $\{\pm 1\}$ (obtained by comparing the  two derivatives of $f|_{S^1}$ at the transverse double point  to the orientation of $S^2$). Call the monogon $A\subset D^2\xrightarrow{f} S^2$ {\em positive} or {\em negative} according to whether this self-intersection number is positive and negative. 
The monogon separates $S^2$ into two disks, one which has a convex (i.e., $\frac\pi 2$) corner and one which has a concave (i.e., $\frac{3\pi} 2$) corner.
\end{definition}
  
  Figure \ref{fishyfig} illustrates an immersion of a disk in $\RR^2=S^2\setminus\{\infty\}$ with two monogons, a  negative monogon $f(A)$ and a positive monogon $f(B)$. The disk bounded by $f(A)$ (and $f(B)$) with convex corners in this example is the disk which does not contain $\infty$.

  Note that $f(B)$ lies in the interior of   the  unique disk bounded by $f(A)$.
 The following proposition shows this always happens.  
  
   \begin{figure} 
\begin{center}
\def\svgwidth{2.3in}
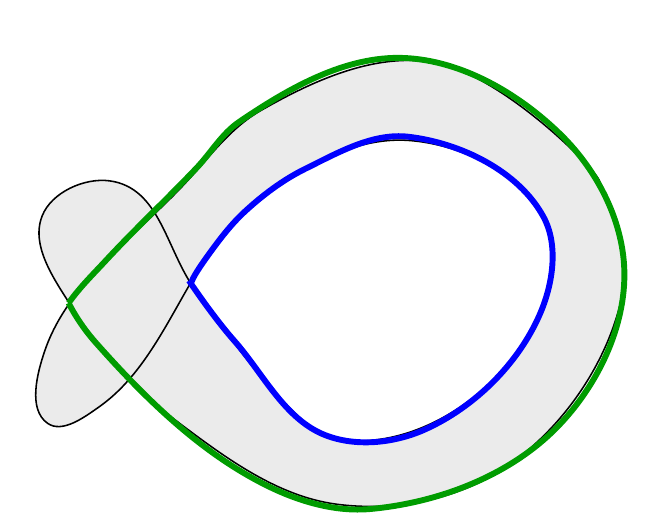
 \caption{\label{fishyfig} }
\end{center}
\end{figure}

\begin{proposition} \label{monocle} Suppose that $f:D^2\to S^2$ is an orientation-preserving  immersion whose boundary is in general position. Suppose that $A\subset \partial D^2\xrightarrow{f} S^2$ is a negative monogon.     Let $U\subset S^2$ denote the disk bounded by $f(A)$ with convex corner. 
 Then there exists another monogon $B\subset \partial D^2\setminus A\xrightarrow{f} S^2$ with $f(B)\subset \text{\rm Int}(U)$. 
\end{proposition}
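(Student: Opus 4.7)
The plan is by contradiction via a rotation number argument, followed by an innermost-chord iteration.

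First, choose local coordinates at $p\in S^2$ making the two tangent rays of $f(A)$ at $p$ the positive $x$- and $y$-axes, with $U$ occupying the first quadrant.  The negativity of $A$ translates, via orientation bookkeeping (using that $\partial D^2$ is counterclockwise oriented and $f$ is orientation preserving), into the statement that the interior of $D^2$ along $A$ maps \emph{into} $U$; the two local preimages $p_1,p_2$ of $p$ then give that the generic degree $|f^{-1}(y)|$ equals $2$ for $y\in\operatorname{Int}(U)$ close to $p$.

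Next I would show $f(A')$ must meet $\operatorname{Int}(U)$.  Suppose not.  Then the connected component $\Omega_0\subset f^{-1}(\overline{U})$ containing $A$ is a sub-disk of $D^2$ with $\partial\Omega_0=A\cup\alpha$, where $\alpha$ is an interior arc from $p_1$ to $p_2$ with $f(\alpha)\subset\partial U$.  The crucial point is that $\alpha$, meeting $\partial D^2$ transversely at $p_1$, has a tangent there distinct from $\partial D^2$'s; since $df$ already sends $\partial D^2$'s tangent at $p_1$ to one of the two tangent directions of $\partial U$ at $p$ (say the $p_1$-side branch), $df$ must send $\alpha$'s tangent at $p_1$ to the other branch.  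Consequently $f|_\alpha$ wraps $\partial U$ with orientation opposite to $f|_A$ in the original parameterization.  Tracking smooth tangent-angle rotations of $f|_A$ and of $f|_\alpha$ along $\partial U$, together with the two corner exterior angles of $\pi/2$ at $p_1,p_2$, the total rotation of $f|_{\partial\Omega_0}$ computes to $4\pi$, i.e.\ rotation number $2$.  This contradicts Whitney's theorem that the boundary of an immersion of a disk into $\RR^2$ has rotation number $1$.

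Finally, $f(A')$ enters $\operatorname{Int}(U)$, and I would extract the desired monogon by an innermost argument.  Consider the components $J$ of $A'\cap f^{-1}(\overline{U})$; each such $J$ has endpoints on $f^{-1}(\partial U)\cap A'$ and either is itself a monogon (both endpoints mapping to a common point of $f(A)$, in which case $B=J$) or $f(J)$ is a chord of $\overline{U}$.  If every $J$ is a chord, pick an innermost one $J_0$ which, together with a sub-arc of $\partial U$, cuts off a sub-disk $V\subset\overline{U}$ disjoint from the other chords.  A local sign analysis at the two corners of $V$ shows that $J_0$ plays the role of a negative monogon for the restricted immersion $f|_{f^{-1}(\overline{V})}$, and iterating the argument---each step strictly decreasing the number of chord components inside the relevant region---eventually produces the required monogon $B$ with $f(B)\subset\operatorname{Int}(U)$.

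The main obstacle is the rotation-number computation in the second paragraph: tracing the forced ``opposite wrapping'' of $f|_\alpha$ and verifying that the smooth contributions plus the corner exterior angles indeed sum to $4\pi$ rather than $2\pi$.  This is precisely where the ``negative'' hypothesis on $A$ enters; for a positive monogon, the interior of $D^2$ along $A$ would map outside $U$, so the component of $f^{-1}(\overline{U})$ abutting $A$ would degenerate, and no such contradiction would arise---consistent with the well-known fact that positive monogons need not contain any further monogon in their convex-corner disk.
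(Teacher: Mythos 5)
Your overall tool is the right one---Whitney's theorem that the boundary Gauss map of an immersed disk has degree $\pm 1$---and this is exactly what the paper's proof invokes. But the logical architecture of your argument has a genuine gap, and it is in the part you already flagged as needing work (paragraph three), not the rotation-number computation you were worried about.

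The trouble is the direction of the contradiction. Your Step 2 proves: if $f(A')\cap \operatorname{Int}(U)=\emptyset$ then Whitney is violated. This is a \emph{stronger} hypothesis than ``no monogon $B$ exists,'' so establishing it does not finish the job, and your Step 3 must bridge the gap. It does not. If every component $J$ of $A'\cap f^{-1}(\overline U)$ is mapped by $f$ injectively (an embedded chord), then there is simply \emph{no} monogon with image in $\operatorname{Int}(U)$ to be found---a monogon requires a self-intersection of $f|_{\partial D^2}$ inside $U$---so an ``innermost chord'' iteration cannot ``produce'' one; at best it could be aiming for a contradiction, and as written it is not. There is also a smaller confusion in the same paragraph: a component $J$ whose two endpoints map to a common point of $\partial U = f(A)$ is a monogon whose double point lies on $\partial U$, so $f(J)\not\subset\operatorname{Int}(U)$ and $J$ is not an admissible $B$; the relevant monogons are proper sub-arcs of the $J$'s coming from interior self-intersections, not the $J$'s themselves. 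Finally, your Step 2 takes for granted that the component $\Omega_0$ of $f^{-1}(\overline U)$ containing $A$ is a disk with $\partial\Omega_0 = A\cup\alpha$ for a single interior arc $\alpha$; this needs an argument (a priori $\Omega_0$ could be a planar surface with several boundary components, or have corners along $A'$).

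The paper's proof bridges the gap you are missing in one clean move. Assume no admissible monogon exists. Then every component of $(f|_{\partial D^2})^{-1}(\operatorname{Int}(U))$ is an embedded arc (an innermost self-intersection of a non-embedded component would give a monogon inside $\operatorname{Int}(U)$). Now perform a regular homotopy of $f$ sliding these finitely many embedded arcs out of $U$---innermost first. After the homotopy $f(\partial D^2)\cap\operatorname{Int}(U)=\emptyset$, which is exactly the ``clean'' situation your Step 2 handles, and only then is Whitney's theorem applied, on a smooth circle $C$ bounding an $\epsilon$-neighborhood of $\overline U$ (which also sidesteps the corner bookkeeping at $p_1,p_2$). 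Regular homotopy preserves the degree of the boundary Gauss map, so the contradiction is legitimate. In short: rather than trying to \emph{find} the monogon, assume it is absent, use that absence to make $f(\partial D^2)$ avoid $\operatorname{Int}(U)$ entirely, and then derive the contradiction.
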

 
\begin{proof}[Sketch of proof] Suppose that no such $B$ exists. Then the preimage of $U$  by $f|_{\partial D^2}$ is a union of arcs in $\partial D^2$, each of which is embedded in $U$.  By a regular homotopy of $f:D^2\to S^2$ one can slide all embedded arcs  off $U$.  Thus we may arrange, after a regular homotopy, that   $f(\partial D^2)\cap \text{Int}(U)$ is empty. 

Let $C\subset S^2$ be a smooth circle which forms the boundary of an $\epsilon$ neighborhood of (the closed disk) $U$, with $\epsilon$ small and $C$ transverse to $f(\partial D^2)$.  Since $f$ is an immersion, $f$ is transverse to $C$. Since $f(\partial D^2)\cap \text{Int}(U)$ is empty, $f^{-1}(C)$ consists of precisely one arc (and perhaps several circles) with boundary points $x,y\in \partial D^2$, such that the clockwise interval in $\partial D^2$ from  $x$ to $y$ contains $A$ in its exterior.  The situation is illustrated in Figure \ref{fishy2fig}.

   \begin{figure} 
\begin{center}
\def\svgwidth{4.5in}
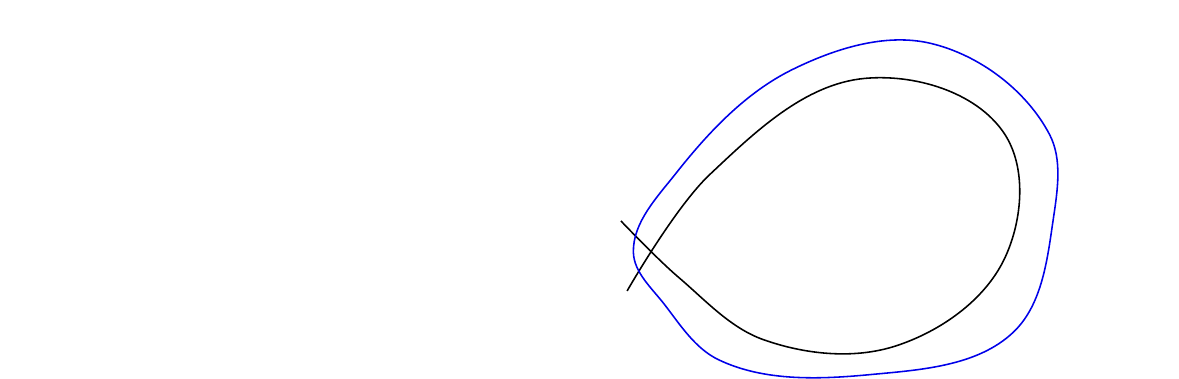
 \caption{\label{fishy2fig} }
\end{center}
\end{figure}

 The arc component of $f^{-1}(C)$ cuts  $ D^2$ into two subdisks.   Let $D'\subset D^2$ denote the subdisk containing $A$. The restriction of $f$ to $D'$ is (after smoothing the corners near $x$ and $y$) an immersion of a disk whose Gauss map along the boundary has degree greater than or equal to 2, since the immersion $f$ must wrap the arc component of $f^{-1}(C)$  monotonically and clockwise around $C$.  But this is impossible, since the Gauss map of the boundary of any immersed disk  must have degree $\pm1$.
\end{proof}

 Proposition \ref{monocle} is used to  greatly reduce the number of cases to be considered in the hunt for immersed polygons in the pillowcase. We illustrate one example, which highlights why $\mu^n=0$ for $n>3$.  
  
 Suppose that  $C\to S$ is a general position immersion of a 1-manifold $C$ in a surface $S$. Suppose an isotopy $H:S\times I\to S$ is given, whose flow lines are transverse to  both branches of the immersion near a transverse double point.   Then near this double point, the first four push-offs of the curve, $C'=H(C,\epsilon), C''=H(C,2\epsilon), C''=H(C,3\epsilon), C'''=H(C,4\epsilon)$,  form a grid, as illustrated in  Figure \ref{griddyfig}.
 
    \begin{figure} 
\begin{center}
\def\svgwidth{2.7in}
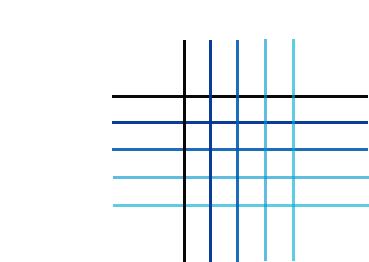
 \caption{\label{griddyfig} }
\end{center}
\end{figure}

  The ten points in this grid indicate potential corners for any immersed convex $5$-gon contributing to  $$\mu^n:(C',C)\times (C'',C')\times (C''',C'')\times (C'''',C''')\to (C'''',C).$$ Following the image of the counterclockwise boundary of an immersed 5-gon whose boundary enters a neighborhood of this grid, one sees that 
 \begin{enumerate}
\item  Corner points can map to one of the indicated 10 points,
\item If three or more consecutive corner points and the arcs between them are mapped into this neighborhood, then the conclusion of Proposition \ref{monocle} is violated. 
\end{enumerate}
 Hence at most 2 corners and the arcs between them are mapped into this grid.

 More generally, similar arguments show the  following result. 
 \begin{lemma} In a neighborhood of a transverse intersection of $L_i$ with $L_j$, where $i,j$ may or may not be equal, the successive push-offs form a grid similar to Figure \ref{griddyfig}.  If the boundary of an immersed convex $n$-gon passes through such a grid region, it  passes through without turning, or else it makes either a left turn, or a left U-turn (i.e., two consecutive left turns, after which it   leaves the grid region on a parallel copy of the curve it entered on.)  \qed \end{lemma}  
 
 \color{black}

 Another useful observation in organizing the enumeration of polygons is to focus on a neighborhood of the the point $d_0$ of Figure \ref{Generatorsfig}.  A illustration of a neighborhood of this point is given in Figure \ref{twistyfig}.    It is straightforward to check that any immersed convex polygon representing some component $\mu^n$ which intersects this neighborhood must intersect it in one of exactly four ways, two of which are illustrated in Figure \ref{twistyfig}.    This is because the complementary regions on both sides of $L_0$ near $d_0$ contain corners of the pillowcase.  
 
 In particular, the polygon boundary must make a left U-turn in this region, involving exactly one or two vertices.   This argument also  shows that any polygon representing a non-zero $\mu^n$ which enters this neighborhood must have output point $d_0$, that is, must  contribute to a calculation of the type $\mu^n(x_1,\dots, x_n)= d_0+ \dots.$  The same argument applies equally well to the portion of $L_1$ around $d_1$.\color{black} 
 
     \begin{figure} 
\begin{center}
\def\svgwidth{2.0in}
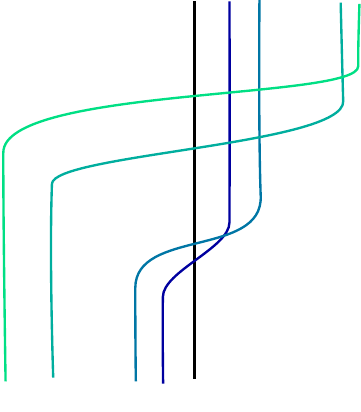
 \caption{\label{twistyfig} }
\end{center}
\end{figure}

 In a similar way, the behavior of a polygon boundary passing through the area around $a_0$ or $a_1$ can be analyzed.  In this case, only one of the two complementary regions next to $L_i$ contains a pillowcase corner, which allows  a polygon boundary to pass through the $a_i$ region without the polygon lying completely in a neighborhood of  $L_i$, but the additional options  to complete this portion with only left turns and Gauss map degree 1 are easily enumerated.  \color{black}  

 A patient reader, armed with these observations, can complete the proof of  Theorems \ref{maincalc} and \ref{testthm}.

\section{Traceless character varieties of planar tangles}\label{tracelesssect}

   In this appendix, we explain how traceless character varieties provide a relationship between  2-tangles and the Fukaya category of the pillowcase. 

In \cite{HHK1}, the  traceless $SU(2)$ character variety $R(Y,L)$ is defined for a codimension 2 pair, and a variant $R^\nat(Y,T)$ is defined when $Y$ is 3-dimensional and an earring (determining a non-trivial $SO(3)$ bundle) has been placed on a component of $T$. Holonomy perturbations $\pi=\{ (C_i, f_i(x)\}_{i=1}^k$  are introduced to make these varieties regular. Here $C_i$ are disjoint solid tori embedded in $Y\setminus T$ and $f_i(x)$ are odd $2\pi$-periodic functions. The holonomy perturbed versions of these spaces (we refer to \cite[Section 7]{HHK1} for their definition) are denoted by $R_\pi(Y,T)$ and $R_\pi^\nat(Y,T)$ respectively.

  When  $L$ is a collection of $k$ points in the 2-sphere, we abbreviate $R(S^2,L)$ as $R(S^2, k)$.
In \cite{HK,K} it is established that $R(S^2, 2n)$ is a compact symplectic variety of dimension $4n-6$ with $2^{2n-2}$ singular points, each of which which admits a  cone neighborhood.  Let $R(S^2, 2n)^*\subset R(S^2, 2n)$  be the complement of the (finite) singular set, a smooth symplectic (non-compact) manifold with $2^{2n-2} $ cylindrical ends.

The pillowcase $P$ can be identified with the traceless $SU(2)$  character variety of the 4-punctured 2-sphere $R(S^2,4)$ (see Equation (\ref{pillowcord}) and \cite[Proposition 3.1]{HHK1}). It can be shown that the symplectic form on the top stratum $P^*$, descended from the Atiyah-Bott-Goldman form on the flat moduli space of a genus 2-surface via symplectic reduction  as in  \cite{HK}, equals $dx\wedge dy$ up to a multiplicative constant. 

The results of \cite{HK} show that if $Y$ is a 3-manifold satisfying $\partial Y=S^2$ and $T$ has $2n$ boundary components,
there exist  small perturbations $\pi$ so that $R^\nat_\pi(Y,T)$ is a smooth compact manifold of dimension $2n-3$, with the restriction map $R^\nat_\pi(Y,T)\to R(S^2,2n)$ a Lagrangian immersion with image in $R(S^2, 2n)^*$.  Also, there exist small perturbations $\pi$ so that $R_\pi(Y,T)$
is compact, has smooth stratum of dimension $2n-3$, and has  finitely many singular points whose neighborhoods are cones on $\CC P^{n-2}$.   The restriction map $R_\pi(Y,T)\to R(S^2,2n)$ is a Lagrangian immersion which  takes singular points to singular points and preserves the cone structure, so that removing the singular points yields a proper Lagrangian immersion $R_\pi(Y,T)^*\to R(S^2, 2n)^*$ with a well defined limit point in the link at infinity. 
In other words, $R^\nat_\pi(Y,T)$ represents an object in the Fukaya category of $R(S^2, 2n)^*$, and $R_\pi(Y,T)^*$ represents an object in the  wrapped Fukaya category of $R(S^2, 2n)^*$.


\medskip

Every {\em planar} 2-tangle is isotopic in $D^2\times I$   to $T_\ell(n)$ for some $\ell=0,1$, $n\geq 0$.  Theorem \ref{fukayadudes2} implies  that   the traceless character variety constructions $R^\nat$  and  $R$, with appropriate perturbation $\pi$,     assign, to $T_\ell(n)$,    {\em Lagrangians with multiplicity}  in the pillowcase $P^*$: 
\begin{align}
\label{tracelessfunctor}
T_\ell(n)&\mapsto R^\nat_\pi(D^2\times I,T_\ell(n)) =\{\pm 1\}^n\times L_\ell \\
 T_\ell(n)&\mapsto R_\pi(D^2\times I,T_\ell(n)) =\{\pm 1\}^n\times W_\ell.\nonumber
\end{align}

\medskip
Adding a small unknotted circle $S$ to a tangle $T$ in a 3-manifold $Y$ replaces the fundamental group of $Y\setminus T$ by its free product with the meridian  of $S$.  Thus on the level of traceless character varieties this replaces $R(Y,T)^*$ by $S^2\times R(Y,T)^*$, where $S^2$ corresponds to the 2-sphere of traceless elements of $SU(2)$. Since  $\partial (Y,T\cup S)=\partial (Y,T)$, for dimension reasons  the restriction $R(Y,T\cup S)\to R(\partial Y, \partial T)$ can never be a Lagrangian immersion. This reasoning applies verbatim to $R_\pi(Y,T)$ and $R_\pi^\nat(Y,T)$. 

The following lemma  and corollary  explain how to choose one more perturbation curve to obtain a new perturbation $\pi'=\pi\cup (C,f)$  so that $R_{\pi'}(Y,T')= S^0\times R_\pi(Y,T)$ and $ R^\nat_{\pi'}(Y,T')= S^0\times R^\nat_\pi(Y,T)$. In Morse theory terminology,  Morse-Bott critical 2-spheres are perturbed into pairs of Morse critical points.   Moreover this process fixes the restriction to the boundary.

\begin{lemma} Let $(Y,T)=(D^3, I)$ be a trivial 1-tangle in the ball.   Let $S\subset Y\setminus T$ be a small unknotted circle 
missing $T$.   Let $C\subset Y\setminus T\cup S$ be the boundary of a disk in $Y$ which intersects $S$ and $T$ transversely in one point each. Let $0<|\ep|<\frac\pi 2$ and denote the perturbation data $\{(C, \ep\sin x)\}$ by $\pi'$.
Then $R_\pi(Y,T\cup S)$ consists of precisely two points.    

If $\mu_T$ and $\mu_S$ denote meridians   for the $T$ and  $S$, respectively,  then these two conjugacy classes  are distinguished by the property that one of them sends $\mu_T$ and $\mu_S$ to the same matrix and the other sends $\mu_T$ and $\mu_S$ to a matrix and its negative.
\end{lemma}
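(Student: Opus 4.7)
My plan is to compute the unperturbed traceless character variety $R(D^3,T\cup S)$ and then analyze how the holonomy perturbation $(C,\epsilon\sin x)$ regularizes this one-dimensional variety to the two points claimed in the lemma.

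Since $T$ is a trivial arc and $S$ is an unknotted circle unlinked from $T$, the fundamental group $\pi_1(D^3\setminus(T\cup S))$ is the free group on the meridians $\mu_T,\mu_S$, and a traceless representation sends these to unit imaginary quaternions $A,B\in S^2\subset SU(2)$. Modulo $SU(2)$-conjugation (which acts as $SO(3)$ on $S^2$), the character variety is identified with the closed interval parametrized by $A\cdot B\in[-1,1]$, whose two endpoints $A=\pm B$ are the reducible loci where the stabilizer jumps up to $U(1)$. Since the disk $D$ bounded by $C$ meets $T$ and $S$ transversally once each, the class of $C$ in $\pi_1(D^3\setminus(T\cup S))$ is $\mu_T\mu_S^{\pm 1}$; after adjusting orientations so that $[C]=\mu_T\mu_S$, we have $\rho(C)=AB$ with trace $-2A\cdot B$. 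In particular, $\rho(C)=-\mathbf{1}$ at $A=B$ and $\rho(C)=\mathbf{1}$ at $A=-B$, while in the interior of the interval $\rho(C)$ is a nontrivial rotation with a well-defined axis.

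The main argument is then to apply the holonomy perturbation formalism of \cite{HK} to the single perturbation curve $(C,\epsilon\sin x)$ with $\epsilon$ small. The unperturbed moduli space is a Morse--Bott critical set (the interval), and the perturbation regularizes it into non-degenerate critical points concentrated at the two reducible endpoints; a representation in the interior of the interval is irreducible and acquires a nonzero gradient of the perturbed functional when the perturbation is switched on, so it does not survive. At each reducible endpoint $A=\pm B$, the $U(1)$-stabilizer accommodates the small rotational twist imposed by the perturbation, leaving a single isolated $SO(3)$-conjugacy class. This produces exactly two points in $R_{\pi'}(D^3,T\cup S)$, one at which $\rho(\mu_T)=\rho(\mu_S)$ (corresponding to $A=B$) and one at which $\rho(\mu_T)=-\rho(\mu_S)$ (corresponding to $A=-B$), matching the distinguishing property in the statement.

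The main obstacle is the verification that each reducible endpoint contributes exactly one conjugacy class while no interior solution survives. The holonomy perturbation formula degenerates at the reducibles because the rotation axis of $\rho(C)=\pm\mathbf{1}$ is indeterminate, so the analysis there must use a slice of the $SO(3)$-action combined with an explicit local transversality computation. The requisite arguments are standard in the gauge-theoretic framework of Kronheimer--Mrowka and Herald--Kirk, but require careful bookkeeping of framings and orientations of $C$ to fix the signs.
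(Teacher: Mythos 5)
Your setup is on the right track: you correctly identify the unperturbed space $R(D^3, T\cup S)$ as the interval parametrized by $A\cdot B\in[-1,1]$ with $A=\rho(\mu_T),\,B=\rho(\mu_S)$, and you correctly identify the holonomy along the perturbation curve as $\rho(\lambda_C)=\rho(\mu_T)\rho(\mu_S)^{-1}$, so that the reducible endpoints $A=\pm B$ are the candidate surviving points. The issue is that you then invoke a Morse--Bott perturbation heuristic --- ``interior irreducibles acquire a nonzero gradient and do not survive; each reducible endpoint contributes one isolated class'' --- and explicitly acknowledge that the degenerate behavior at the reducibles and the interior non-survival ``require careful bookkeeping'' you do not carry out. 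That is precisely the content that needs to be proved, and a general transversality principle is not available here to quote: holonomy perturbations alter the flatness equation itself, and the claim that the perturbed solution set is as described must be verified directly.

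The paper's proof is a short explicit computation that closes exactly this gap, and it is not a Morse--Bott/transversality argument at all. Working in $\pi_1(Y\setminus(T\cup S\cup C))=\langle \mu_T,\mu_S,c\mid[c,\mu_T]=[c,\mu_S]=1\rangle$, one writes $\rho(\lambda_C)=e^{\alpha Q}$; the perturbation condition gives $\rho(c)=e^{\epsilon\sin\alpha\,Q}$, and the relation $[c,\mu_S]=1$ forces $e^{\epsilon\sin\alpha\,Q}$ to commute with $\rho(\mu_T)$. If $\sin\alpha\neq 0$ this pins $Q=\pm\rho(\mu_T)$, and substituting into $\rho(\mu_S)=\rho(\lambda_C)^{-1}\rho(\mu_T)=\pm e^{(\pi/2-\alpha)Q}$ contradicts tracelessness of $\rho(\mu_S)$ unless $\sin\alpha=0$. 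So $\rho(\mu_S)=\pm\rho(\mu_T)$ with $\rho(c)=1$, giving at most two classes; the two explicit assignments then show exactly two. In other words, the perturbation equation combined with a peripheral commutation relation rigidly excludes the entire interior of the interval, without any appeal to transversality or a local slice analysis near the reducibles. Your proposal reaches the right answer but defers the actual proof to a framework that you would still need to set up and justify from scratch; you should replace the heuristic with this direct quaternionic computation (or its equivalent).
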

\begin{proof} Identify $SU(2)$ with the unit quaternions, so that traceless matrices correspond to purely imaginary unit quaternions.  Also recall that that the inverse of a purely imaginary quaternion equals its negative, and that the purely imaginary quaternions form a conjugacy class in the unit quaternions. 

Figure \ref{onearcfig} illustrates the arc $T$, the circle $S$, the perturbation curve $C$, and their meridians $\mu_T,\mu_S$ and $c$. The fundamental group $\pi_1(Y\setminus (T\cup S\cup C))$ has the presentation
$$\langle\mu_T,\mu_S,c \mid [c,\mu_S], [c, \mu_T\mu_S^{-1}]\rangle.$$ 
It is simpler but equivalent to write this 
$$\langle \mu_T, \mu_S, c \mid [c,\mu_T]=[c,\mu_S]=1\rangle.$$  
The longitude $\lambda_C$ of $C$ is represented by $\mu_T\mu_S^{-1}$.

  \begin{figure} 
\begin{center}
\def\svgwidth{1.7in}
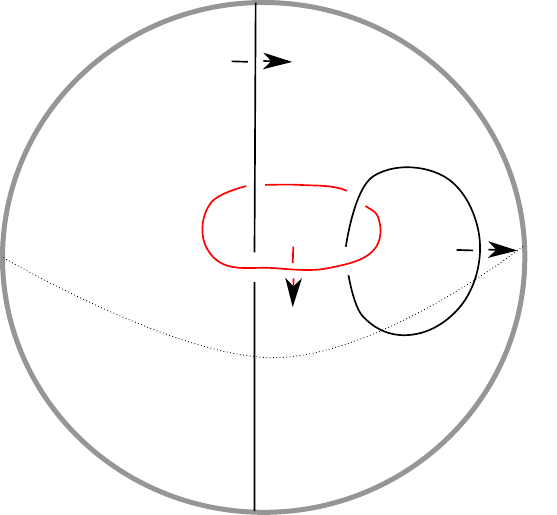
 \caption{\label{onearcfig} }
\end{center}
\end{figure}

The space $R_{\pi'}(Y,T\cup S)$ consists of conjugacy classes of $SU(2)$ representations $\rho$ of $\pi_1(Y\setminus (T\cup S\cup C))$ 
which send $\mu_T$ and $\mu_S$ to purely imaginary unit quaternions and which satisfy the perturbation condition: 

$$\rho(\lambda_C)=e^{\alpha Q} ~\mbox{\text for some purely imaginary quaternion } Q ~\mbox{\text implies } \rho(c)=e^{\ep\sin(\alpha) Q}.$$

If $[\rho]\in R_{\pi'}(Y,T\cup S)$, fix a representative and write $\rho(\lambda_C)=e^{\alpha Q}$ with $\alpha\in [0,\pi]$ and $Q$ a purely imaginary unit quaternion. This determines $Q$ and $\alpha$ uniquely unless $\sin\alpha=0$. Then $$\rho(\mu_S)=\rho(\lambda_C)^{-1}\rho(\mu_T)=e^{-\alpha Q}\rho(\mu_T).$$

 Suppose  that $\sin\alpha\ne 0$. Then $[c, \mu_S]=1$ implies
$\rho(c)\rho(\mu_S)=\rho(\mu_S)\rho(c)
$, and hence
$$e^{\ep\sin\alpha Q}\rho(\mu_T)=\rho(\mu_T)e^{\ep\sin\alpha Q}.$$
Since $\rho(\mu_T)$ is a purely imaginary quaternion, and $\ep\sin\alpha\ne 0$, this implies that $Q=\pm \rho(\mu_T)$. 
Then $\rho(\mu_S)=\pm e^{-\alpha Q}Q=\pm e^{(-\alpha +\tfrac\pi 2)Q}$. But since $\rho(\mu_S)$ is a purely imaginary quaternion, this implies $\cos(\alpha+\tfrac \pi 2)=0$, contradicting $\sin\alpha\ne 0$. 

Hence $\sin\alpha=0$ and thus $\rho(\mu_S)=\pm\rho(\mu_T)$, and $\rho(c)=1$. This shows that there are at most two conjugacy classes in $ R_{\pi'}(Y,T\cup S)$.   But, the two assignments 
$$\mu_S,\mu_T\mapsto \bbi, c\mapsto 1, \mbox{ and } \mu_S\mapsto \bbi,\mu_T\mapsto -\bbi, c\mapsto 1$$
define non-conjugate representations satisfying the perturbation condition,  and hence there are precisely two conjugacy classes, completing the proof.
\end{proof}

\begin{corollary}\label{double}
 Let $(Y,T)$ be a tangle in a 3-manifold, and $\pi$ perturbation data. Let $D\subset Y$ be a 3-ball meeting $R$ in a single unknotted arc, and let  $T'$ be obtained from $T$ by adding a small unknotted circle in $D$ as in Figure \ref{onearcfig}. Then, with  $\pi'=(C,\ep \sin x)$ $0<|\ep|<\tfrac \pi 2$,
 $$R_{\pi\cup \pi'}(Y,T')=R_{\pi}(Y,T)\times \{\pm 1\}$$
and the restriction  $R_{\pi\cup \pi'}(Y, T')\to R(\partial Y, \partial T)$ coincides with the restriction  $R_\pi(Y,T)\to R(\partial Y, \partial T)$ on each of the two copies. The projection $R_{\pi\cup \pi'}(Y,T')\to  \{\pm 1\}$ is given by $[\rho]\mapsto  \rho(\mu_T\mu_S)$.
\end{corollary}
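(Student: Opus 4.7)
The plan is to reduce to the local lemma by cutting along the boundary of the 3-ball $D$ and matching representations on the two sides. Let $B=\partial D$, a 2-sphere meeting $T$ transversely in the two endpoints of the arc $T\cap D$, and let $Y_0=Y\setminus\operatorname{int}(D)$. Because the perturbation curve $C$ and the added circle $S$ both lie in $D$, the perturbation data decomposes as ``$\pi$ on $Y_0$'' and ``$\pi'$ on $D$.''  The first step is to apply van Kampen to $Y\setminus(T'\cup C)$ along the 2-punctured sphere $B\setminus \partial T$ to express $\pi_1(Y\setminus(T'\cup C))$ as the amalgamated product of $\pi_1(Y_0\setminus T)$ and $\pi_1(D\setminus(T\cap D)\cup S\cup C)$ over the free group on a meridian $\mu_B$ of the arc $T\cap D$ in $B$. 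Under this decomposition, the global restriction map to $R(\partial Y,\partial T)$ factors through the $Y_0$ side, and the ``traceless on meridians'' condition on either side forces $\mu_B$ to be sent to a purely imaginary unit quaternion in both.

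The second step is to invoke the preceding lemma in $D$: for any purely imaginary unit quaternion assigned to the common boundary meridian $\mu_B$ (which equals $\mu_T$ in the local picture), the set of  perturbed conjugacy classes of representations of $\pi_1(D\setminus(T\cap D)\cup S\cup C)$ restricting to this value on $\mu_B$ consists of exactly two elements, distinguished by whether $\rho(\mu_S)=\rho(\mu_T)$ or $\rho(\mu_S)=-\rho(\mu_T)$, and in both cases the representation fixes $\mu_B$ and sends the extra generator $c$ to $1$. The third step is to observe that, since $\rho(c)=1$, the amalgamation condition over $\mu_B$ imposes no new constraint: any $[\rho_0]\in R_\pi(Y,T)$ glues with either of the two local conjugacy classes to produce an element of $R_{\pi\cup\pi'}(Y,T')$, and the gluing is unique up to overall conjugation because the stabilizer of $\rho_0(\mu_B)$ acts transitively on the possible conjugations of the local piece that hold $\rho_0(\mu_B)$ fixed. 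This sets up a bijection
\[
R_{\pi\cup\pi'}(Y,T')\;\longleftrightarrow\;R_\pi(Y,T)\times\{\pm 1\},
\]
with the sign given by $\rho(\mu_T\mu_S)=\pm 1$ as claimed.

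The fourth step is to check the two remaining assertions. For the boundary restriction, note that $\partial T'=\partial T$ and that the restriction factors through $\pi_1(Y_0\setminus T)$, on which the two copies agree tautologically; so both copies restrict to $R(\partial Y,\partial T)$ via the original map $R_\pi(Y,T)\to R(\partial Y,\partial T)$. For the sign map, since $\rho(\mu_T)$ and $\rho(\mu_S)$ are purely imaginary unit quaternions with $\rho(\mu_S)=\pm\rho(\mu_T)$, we have $\rho(\mu_T\mu_S)=\rho(\mu_T)\cdot(\pm\rho(\mu_T))=\mp 1$, which is a (conjugation-invariant) central element and so descends to a well-defined function on $R_{\pi\cup\pi'}(Y,T')$ taking values in $\{\pm 1\}$.

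The main technical obstacle I expect is the gluing step: one must verify carefully that the two local conjugacy classes supplied by the lemma combine with an arbitrary global $[\rho_0]$ to give \emph{distinct} global conjugacy classes, rather than being identified by some conjugation that extends across $B$. This reduces to showing that the centralizer of $\rho_0(\mu_B)$ in $SU(2)$, which is a circle subgroup, acts trivially on the local piece up to conjugacy; this follows because the local representations are entirely determined by $\mu_T, \mu_S \in \{\pm \rho_0(\mu_B)\}$ and $c=1$, all of which are fixed by the centralizer. Once this is handled, the remaining assertions about perturbation regularity and transversality are routine and local in nature, following from the fact that the added perturbation data is supported in $D$ and the local model has been explicitly analyzed.
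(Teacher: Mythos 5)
The paper states Corollary~\ref{double} without proof, treating it as an immediate consequence of the preceding lemma, and your van Kampen cut-and-paste argument supplies the intended derivation. Your reduction is correct: cutting $Y\setminus(T'\cup C)$ along $\partial D$ amalgamates $\pi_1(Y_0\setminus T)$ and the local group over $\ZZ\langle\mu_B\rangle$, the lemma determines the local representation completely once $\rho(\mu_B)$ is fixed (two choices, with $\rho(\mu_S)=\pm\rho(\mu_B)$ and $\rho(c)=1$), and the gluing concern you flag is handled exactly as you say, since $\rho(\mu_T\mu_S)=\mp1$ is central and hence a conjugation invariant that separates the two extensions into distinct global classes; the boundary restriction tautologically factors through the $Y_0$ side. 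Two cosmetic points: the proof uses the implicit hypothesis (standard in the paper's setup) that the original perturbation solid tori $C_i$ of $\pi$ lie in $Y_0$, disjoint from $D$; and the last paragraph's worry about ``perturbation regularity and transversality'' is moot, since the corollary asserts only a set/space-level identification and regularity is not part of the claim.
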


In \cite{HHK1}, the space $R(S^2,4)$ is identified with the pillowcase $P$.  Explicitly, giving $\pi_1(S^2\setminus \{p_a,p_b,p_c,p_d\})$ the presentation 
\begin{equation}\label{presentation}
\langle a,b,c,d \mid  ba=cd\rangle
\end{equation}
 with $a,b,d,a$ the four meridians around the punctures, the pair $(e^{\bbi\gamma}, e^{\bbi\theta})$ corresponds to the traceless representation 
\begin{equation}\label{pillowcord}
a\mapsto\bbi, ~b\mapsto e^{\gamma\bbk}\bbi,~c\mapsto e^{\theta\bbk}\bbi.
\end{equation}
Since $d=c^{-1}ba$, $d$ is mapped to $e^{(\gamma-\theta)\bbk}\bbi$.
Thus $R(S^2,4)$ is homeomorphic to a 2-sphere, with four orbifold points of order 2, corresponding to the four conjugacy classes of abelian representations sending $a,b,c$ to $\bbi, \pm \bbi, \pm\bbi$ respectively.

\medskip

Consider the three tangles $T_\times,T_0,$ and $T_1$ illustrated in Figure \ref{tripleballfig}.  
The traceless character varieties  $R(D^3, T_0)$ and $R^\nat_\pi(D^3, T_0)$ are identified explicitly in \cite{HHK1}.  

Proposition 6.1 of \cite{HHK1} shows that the space $R(D^3,T_0)$ is homeomorphic to the  interval $[0,\pi]$ and the restriction to the boundary properly embeds it in the pillowcase via the map
\begin{equation}
\label{RT0}[0,\pi]\to R(S^2,4),~ t\mapsto \big(a\mapsto\bbi, ~b\mapsto e^{t\bbk}\bbi,~c\mapsto e^{t\bbk}\bbi, ~d\mapsto \bbi \big).
\end{equation}

 Theorem 7.1  of \cite{HHK1} shows that there exists a 1-parameter family of holonomy perturbations along a single curve, $\pi_\delta$, defined for small  $\delta>0$ so that $R^\nat_{\pi_\delta}(D^3, T_0)$ is a smooth circle immersing to the pillowcase with one double point via the map
 \begin{equation}
 \label{RnatT0}\RR/2\pi\ZZ \to R(S^2,4),~ t\mapsto \big(a\mapsto\bbi, ~b\mapsto e^{\tfrac\pi 2 + t +\delta\sin t}\bbi, ~c\mapsto e^{\tfrac\pi 2 + t -\delta\sin t}\bbi, ~d\mapsto e^{-2\delta\sin t}\bbi).\end{equation}

Let $T_i(n)$ denote any tangle in the 3-ball obtained by adding an $n$ component unlink in a small ball to $T_i$, where $i\in \{\times, 0, 1\}$, as in Figure \ref{tripleballfig}.

\begin{theorem}
 \label{fukayadudes2}There exists a holonomy perturbation $\pi=\{(C_i,\ep\sin x)\}_{i=1}^n$ so that 
 $$R_\pi(D^3, T_i(n))=[0,\pi]\times\{\pm 1\}^n.$$ 
 The perturbation $\pi$ can be enlarged by one perturbation curve, $\pi'=\pi\cup \{(C,\delta\sin x)\}$ so that 
 $$R_{\pi'}^\nat(D^3, T_i(n))=(\RR/2\pi\ZZ)\times\{\pm 1\}^n.$$ 
 
 The restrictions to the boundary are the same on each path component and are given by the following formulas  (with $d$ determined by $d=c^{-1}ba$)

  \begin{align} \label{RT}   &R_\pi(D^3,T_i(n))\to R(S^2,4):  \\  ~(t,\sigma)\mapsto
& \begin{cases}  a\mapsto\bbi, ~b\mapsto e^{t\bbk}\bbi,~c\mapsto e^{t\bbk}\bbi  & \text{ if } i=0,\\ 
a\mapsto\bbi, ~b\mapsto -e^{-t\bbk}\bbi,~c\mapsto \bbi    & \text{ if } i=\times.\\
   a\mapsto\bbi, ~b\mapsto -\bbi,~c\mapsto -e^{-t\bbk}\bbi    & \text{ if } i=1,
\end{cases}\nonumber 
\end{align}

  \begin{align} \label{RT2}
&  R^\nat_{\pi'}(D^3,T_i(n))\to R(S^2,4):  
\\     ~(t,\sigma)\mapsto 
&  \begin{cases}
a\mapsto\bbi, ~b\mapsto e^{\tfrac\pi 2 + t +\delta\sin t}\bbi, ~c\mapsto e^{\tfrac\pi 2 + t -\delta\sin t}\bbi& \text{ if } i=0,\\
a\mapsto\bbi,~ b\mapsto -e^{(-\tfrac\pi 2 -t +\delta\sin t)\bbk}\bbi, ~c\mapsto e^{2\delta\sin t\bbk}\bbi& \text{ if } i=\times,\\
a\mapsto\bbi,~ b\mapsto -e^{-2\delta\sin t \bbk}\bbi, ~c\mapsto -e^{(-\tfrac\pi 2 -t -\delta\sin t)\bbk}\bbi  & \text{ if } i=1. 
\end{cases}\nonumber 
\end{align}

 where $\sigma\in \{\pm \}^n$ and $\delta$ is the parameter for the holonomy perturbation $\pi'$.
\end{theorem}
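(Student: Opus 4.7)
The plan is to bootstrap from the case $(i, n) = (0, 0)$, established in \cite{HHK1}, by first incorporating the $n$ unknotted circles using Corollary \ref{double}, and then handling the cases $i = \times$ and $i = 1$ by redoing the calculation of \cite{HHK1} with adjusted boundary pairings.

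For $(i, n) = (0, 0)$, Proposition 6.1 and Theorem 7.1 of \cite{HHK1}, reproduced as (\ref{RT0}) and (\ref{RnatT0}), give the required identifications and boundary restrictions. For the remaining cases $(i, n) = (\times, 0)$ and $(1, 0)$, I would imitate that proof: each tangle complement $D^3 \setminus T_i$ has free fundamental group of rank $2$, since $T_i$ is an unknotted 2-strand tangle in the ball differing from $T_0$ only in how the four boundary points are paired by arcs. Parametrize traceless $SU(2)$ representations of this free group after conjugation and impose the boundary condition by expressing each of the meridians $a, b, c, d$ of (\ref{presentation}) as a conjugate of the chosen free generators via a path in the tangle complement. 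The resulting interval is smoothed to a circle in the earringed case by the single holonomy perturbation used in \cite[Theorem 7.1]{HHK1}, and a direct quaternionic computation, following \cite[Sections 6-7]{HHK1} but with pairings adjusted to reflect the arcs of $T_\times$ and $T_1$, produces (\ref{RT}) and (\ref{RT2}) for $i = \times, 1$.

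To extend to $n \geq 1$, I apply Corollary \ref{double} iteratively. For each circle component $S_j$ of $T_i(n)$, choose a $3$-ball $D_j \subset D^3$ disjoint from $\partial D^3$, from the previously chosen balls, and meeting the earringed arc in a single unknotted subarc with $S_j \subset D_j$; add the perturbation curve $C_j$ provided by Corollary \ref{double}, with perturbation function $\ep_j \sin x$ and $0 < |\ep_j| < \frac{\pi}{2}$. Each application replaces the character variety by its product with $\{\pm 1\}$, classified by $[\rho] \mapsto \rho(\mu_T \mu_{S_j})$, and does not affect the restriction to $R(S^2, 4)$, since $S_j$ and $C_j$ are supported in $D_j$ and hence disjoint from the boundary sphere. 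After $n$ iterations one obtains the stated product structure $[0, \pi] \times \{\pm 1\}^n$ and $(\RR/2\pi\ZZ) \times \{\pm 1\}^n$, with the boundary restriction on each component inherited from the $n = 0$ case.

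The main obstacle will be the quaternionic bookkeeping in the case $i = \times$, whose boundary pairing (the diagonal pairing of the four points on $S^2$) is combinatorially distinct from the two planar pairings realized by $T_0$ and $T_1$; this prevents one from deducing the formula from $T_0$ via a simple ambient diffeomorphism of $D^3$ (unlike the case $i = 1$, which can be related to $i = 0$ by a rotation of $D^3$ permuting the four boundary points, conjugating the restriction map by the induced automorphism of $\pi_1(S^2 \setminus \{a,b,c,d\})$). As a sanity check on the entire calculation, the images of (\ref{RT}) and (\ref{RT2}) in $P = R(S^2, 4)$ should recover, under the coordinates (\ref{pillowcord}), the curves $W_\ell$ and $L_\ell$ of (\ref{theW}) and (\ref{theL}), consistent with the identification (\ref{tracelessfunctor}) used throughout the paper.
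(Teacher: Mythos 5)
Your reduction to the case $n = 0$ via Corollary \ref{double} and iterated ball insertions is exactly what the paper does, and your reliance on (\ref{RT0}) and (\ref{RnatT0}) for $(i,n) = (0,0)$ is likewise correct. Where you diverge is in handling $T_1$ and $T_\times$.

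The paper does not redo any quaternionic computation for $T_1$ or $T_\times$. Instead it observes that there is an order-$3$ homeomorphism $h \colon D^3 \to D^3$ with $h(T_0) = T_1$, $h(T_1) = T_\times$, $h(T_\times) = T_0$, and then computes the induced action of $h|_\partial$ on the presentation $\langle a,b,c,d \mid ba = cd \rangle$ (explicitly, $h^*(a) = c^{-1}$, $h^*(b) = a$, $h^*(c) = b^{-1}$, $h^*(d) = cdc^{-1}$, which descends to the pillowcase coordinate change $(\gamma, \theta) \mapsto (-\theta - \pi, \gamma - \theta)$). Transporting the $T_0$ formulas through $(h|_\partial)^*$ and $(h^2|_\partial)^*$ yields all six formulas in (\ref{RT}) and (\ref{RT2}) at once. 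Your remark that the ``diagonal'' boundary pairing of $T_\times$ ``prevents one from deducing the formula from $T_0$ via a simple ambient diffeomorphism of $D^3$'' is therefore misleading: the diffeomorphism exists and works — it simply induces a $3$-cycle on three of the four marked points (fixing the one carrying the earring), rather than a rotation of the disk as for $T_1$. Antipodality of the pairing is no obstruction because the diffeomorphism need not be a rigid rotation of the boundary circle. Your proposed direct quaternionic calculation for $T_\times$ (and $T_1$) would certainly also succeed — the group-theoretic setup is genuinely parallel to \cite[Sections 6--7]{HHK1} — but it duplicates work that the cyclic symmetry renders unnecessary, and it is more error-prone precisely in the bookkeeping you flag as the ``main obstacle.'' Noting the existence of the $\mathbb{Z}/3$ symmetry would both simplify your argument and tighten the sanity check at the end, where your identification of the images with the curves $W_\ell, L_\ell$ is indeed correct.
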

\begin{proof} From Corollary \ref{double} it suffices to establish the formulas when $n=0$.

  The pairs $(D^3, T_0)$, $(D^3, T_1)$ and $(D^3, T_\times)$ are homeomorphic.  There is a homeomorphism $h:D^3\to D^3$ order 3 satisfying $h(T_0)=T_1, h(T_1)=T_\times,$ and $h(T_\times)=T_0$. 
  
  Since $h(D^3, T_\times)=(D^3,T_0)$, $h$ induces, by precomposition, a homeomorphism $h^*:R(D^3,T_0)\to R(D^3,T_\times)$. It follows that the image of the restriction map $R(D^3,T_\times)\to R(S^2)$ is the same as the image of the composite 
$$R(D^3,T_0)\to R(S^2,4)\xrightarrow{(h|_\partial)^*}R(S^2,4).$$  Similarly, $h^2$ induces, by composition, a homeomorphism $(h^2)^*:R(D^3,T_0)\to R(D^3,T_1)$.  Hence the image of the restriction map $R(D^3,T_1)\to R(S^2)$ is the same as the image of the composite 
$$R(D^3,T_0)\to R(S^2,4)\xrightarrow{(h^2|_\partial)^*}R(S^2,4).$$

   Taking care with base points one can arrange that the induced action of the restriction of $h$ to the boundary 4-punctured 2-sphere  acts on the presentation (\ref{presentation})  by 
$$h(a)=c^{-1},~ h(b)=a, ~ h(c)=b^{-1},~h(d)=cdc^{-1}.$$

The induced action of $h$ on the representation $\rho$  of Equation (\ref{pillowcord}) is given by
$$h^*(\rho): a\mapsto \rho(c^{-1})=(e^{\theta\bbk}\bbi)^{-1}=e^{(\theta+\pi)\bbk} \bbi, b\mapsto \bbi, c\mapsto e^{(\gamma+\pi)\bbk}\bbi,~ d\mapsto e^{(\gamma+\theta)\bbk}\bbi.$$
 This is conjugate, by $e^{-(\frac{\theta+\pi}{2})\bbk}$, to  
 \begin{equation}
\label{actonP}
a\mapsto\bbi, ~b\mapsto e^{-(\theta+\pi)\bbk}\bbi,~c\mapsto e^{(\gamma-\theta)\bbk}\bbi, d\mapsto e^{\gamma\bbk}\bbi.
\end{equation}
More succinctly, $(h|_\partial)^*$ transforms the pillowcase coordinates $(\gamma,\theta)$ to $(-\theta-\pi, \gamma-\theta)$.  Hence $(h^2|_\partial)^*$ transforms $(\gamma,\theta)$ to $(\theta-\gamma-\pi, -\gamma-\pi)$. 
  
   Equation (\ref{RT0}) shows that in these coordinates, the restriction $ R(D^3,T_0)\to R(S^2,4)$ is given by $  [0,\pi]\ni t\mapsto (t,t)$. This is taken to $(-t-\pi,0)$ by $(h|_\partial)^*$ and to $(-\pi, -t-\pi)$ by $(h^2|_\partial)^*$.  The formulas of Equation (\ref{RT}) follow.
  
Similarly, Equation  (\ref{RnatT0})  shows that in these coordinates, the restriction $ R^\nat_{\pi'}(D^3,T_0)\to R(S^2,4)$ is given by $  \RR/(2\pi \ZZ)\ni t\mapsto (\tfrac \pi 2 +t +\delta\sin t, \tfrac \pi 2 +t -\delta\sin t)$.  This is taken to 
$(-\tfrac \pi 2 -t +\delta\sin t-\pi,2\delta\sin t)$ by $(h|_\partial)^* $ and to $(-2\delta\sin t -\pi, 
-\tfrac \pi 2 -t-\delta\sin t-\pi)$ by $(h^2|_\partial)^*$. The formulas of Equation (\ref{RT2}) follow.
  \end{proof}

\bibliographystyle{amsplain} 
\bibliography{HHHK1revisedperreferee}

\end{document}